\documentclass[10pt,twoside]{amsart}
\usepackage{amsmath}
\usepackage{amssymb}
\usepackage{amsthm}
\usepackage{latexsym}
\usepackage{amscd}
\usepackage{xypic}
\xyoption{curve}
\usepackage{ifthen}
\usepackage{hyperref}
\usepackage{graphicx}

\textwidth=15cm
\textheight=22cm
\topmargin=0.5cm
\oddsidemargin=0.5cm
\evensidemargin=0.5cm
\voffset -0.5cm

\usepackage{xcolor}

\newtheorem{theorem}{Theorem}[section]
\newtheorem{lemma}[theorem]{Lemma}
\newtheorem{proposition}[theorem]{Proposition}
\newtheorem{corollary}[theorem]{Corollary}
\newtheorem{remark}[theorem]{Remark}
\newtheorem{claim}[theorem]{Claim}
\theoremstyle{definition}
\newtheorem{definition}[theorem]{Definition}
\newtheorem{example}[theorem]{Example}

\newcommand {\Ext}{\mathrm{Ext}}
\newcommand {\im}{\mathrm{im}}
\newcommand {\rk}{\mathrm{rk}}

\newcommand {\Hilb}{\mathcal{H}\kern -0.25ex{\mathit ilb\/}}
\newcommand {\PGL}{\mathrm{PGL}}

\newcommand {\bZ}{\mathbb{Z}}

\newcommand {\bP}{\mathbb{P}}

\newcommand{\cC}{{\mathcal C}}
\newcommand{\cE}{{\mathcal E}}
\newcommand{\cF}{{\mathcal F}}
\newcommand{\cG}{{\mathcal G}}
\newcommand{\cM}{{\mathcal M}}
\newcommand{\cN}{{\mathcal N}}
\newcommand{\cO}{{\mathcal O}}

\newcommand{\cI}{{\mathcal I}}
\newcommand{\GL}{\operatorname{GL}}

\newcommand{\Pic}{\operatorname{Pic}}

\def\p#1{{\bP^{#1}}}

\def\ga#1{{{\accent"12 #1}}}


\title[rank two aCM bundles]{Rank two aCM bundles \\
on the del Pezzo fourfold of degree $6$
\\and its general hyperplane section}

\subjclass[2000]{Primary 14J60; Secondary 14J45}

\keywords{}

\author[G. Casnati, D. Faenzi, F. Malaspina]{Gianfranco Casnati, Daniele Faenzi, Francesco Malaspina}
\thanks{All the authors are members of GRIFGA--GDRE project, supported by CNRS
  and INdAM, and of the GNSAGA group of INdAM. The first and third authors are supported by the framework of PRIN 2010/11 \lq Geometria delle variet\ga a algebriche\rq, cofinanced by MIUR. The second author is partially supported by ANR GEOLMI contract ANR-11-BS03-0011}

\date{\today}

\begin{document}

\begin{abstract}
In the present paper we completely classify locally free sheaves of rank $2$ with vanishing  intermediate cohomology modules on the image of the Segre embedding $\p2\times\p2\subseteq\p8$ and its general hyperplane sections.

Such a classification extends similar  already known results  regarding del Pezzo varieties with Picard numbers $1$ and $3$ and dimension at least $3$.
\end{abstract}

\maketitle

\section{Introduction}
Let  $\p N$ be the $N$-dimensional projective space over an
algebraically closed field $k$ of characteristic $0$. If $X\subseteq\p
N$ is a smooth closed submanifold of dimension $n \ge 1$ we set $\cO_X(1):=\cO_{\p N}(1)\otimes\cO_X$.

Among vector bundles
 $\cF$ on such an $X$, the simplest ones from the cohomological point of view satisfy the vanishing
$$
H^i\big(X,\cF(t)\big)=0, \qquad \forall\ i=1,\dots,n-1,\ t\in\bZ.   
$$
Such vector bundles are called arithmetically Cohen-Macaulay (aCM for
short). 

It is possible to  classify aCM
bundles completely for varieties belonging to a short
precise list, consisting (for $n \ge 2$) of
projective spaces, 
smooth quadrics, the Veronese surface in $\p5$ and rational normal surface scrolls of degree up to
$4$ (see \cite{F--M} for quartic scrolls and \cite{Ho}, \cite{Kno},
\cite{A--R}, \cite{Sol} for the
remaining cases: cf. also \cite{E--H}).
However, for any other variety such a classification is hopeless, due
to the \lq\lq wild\rq\rq\  behaviour of the category of maximal Cohen--Macaulay modules (which are the algebraic counterpart of aCM bundles), as
described in \cite{F--PL}. 

In spite of this, a detailed study of aCM bundles of low rank, in
particular of rank $2$, is feasible for certain types of varieties, most
notably Fano manifolds. These are smooth projective varieties $X$ of dimension $n$, such that $\omega_X^{-1}$ is ample (see \cite{I--P} for a review about Fano varieties). Let $r$ be the  greatest positive integer such that
$\omega_X\cong\mathcal L^{-r}$ for some ample $\mathcal L\in\Pic(X)$. It is known that $1\le r\le n+1$ and
$r=n+1$ (resp. $r=n$) if and only if $X=\p n$ (resp. $X$ is a
 quadric hypersurface).  

We are thus interested in the case $r\le n-1$. Fano manifolds satisfying the equality $r=n-1$ are called del Pezzo. They are
completely classified and fall into a very short list (see \cite{Fu}, 
\cite{I--P}).

We focus our attention to the case $n\ge 3$: for simplicity we assume $\mathcal L$ to be very ample. Several results about aCM bundles on Fano and del Pezzo
threefolds can be found in  \cite{A--C}, \cite{A--G}, \cite{B--F1},
\cite{Ca}, \cite{Ma3}, \cite{Fa1}, \cite{A--F}, when the Picard number $\varrho(X):=\rk\Pic(X)$ of $X$ equals $1$.
Let us point out that aCM bundles of rank $2$ on our variety $X$ are
intimately related to several important features of $X$, as for instance
for a cubic hypersurface $X$ these bundles are in bijection with inequivalent Pfaffian representations
of $X$. 
In a different direction, stable aCM bundles $\cF$ of rank $2$ with
$c_1(\cF)=0$ on a del Pezzo threefold $X$ correspond
to instantons of minimal charge, cf. \cite{Fa2}, \cite{Kuz} which appear in connection
with the intermediate Jacobian of $X$, and with periods of other classes of
Fano manifolds, \cite{M--T}.
Anyway these bundles have  been
studied mainly under the assumption $\varrho(X)= 1$, while technical
difficulties arise for
higher values of $\varrho(X)$,
mainly because the
correspondence between a rank $2$ bundle $\cF$ and the zero locus of a global section of
$\cF$ is unclear a priori: this is indeed the starting point
of our analysis.

The first complete classification of aCM bundles of rank $2$ on del
Pezzo manifolds $X$ with
$\varrho(X) \ge 2$ is presented in \cite{C--F--M1} for the Segre embedding
$\p1\times\p1\times\p1\subseteq\p7$; the moduli spaces of these
bundles are
studied  in \cite{C--F--M2}. In this case
$\deg(X)=6$ and $\varrho(X)$ is as high as possible.

The aim of the present paper is to give a complete description of
indecomposable aCM bundles of rank $2$ on the other del Pezzo
manifolds of degree $6$, namely the Segre embedding
$\Phi:=\p2\times\p2\subseteq\p8$ and its general hyperplane section
$F$; of course both $F$ and $\Phi$ have Picard number $2$.
In order to formulate our main results, we introduce some notation. Let $A(X)$ be the Chow ring of $X$, so that $A^r(X)$ denotes the set of  cycles of codimension $r$. 
The two projections $\pi_i\colon \Phi\to\p2$, $i=1,2$, are isomorphic
to the canonical map $\bP(\cO_{\p2}^{\oplus3})\to\p2$. We denote by
$\eta_i$, $i=1,2$, the classes of $\pi_i^*\cO_{\p2}(1)$ in
$A^1(\Phi)$, so that $\Pic(\Phi)\cong\bZ^{\oplus2}$ is
generated by $\eta_1$ and $\eta_2$. Clearly,  $\eta_2$ and $\eta_1$ are respectively identified with the tautological divisors of $\pi_1$ and $\pi_2$, and the class of the
hyperplane divisor on $\Phi$ is $\eta=\eta_1+\eta_2$. Then:
$$
A(\Phi)\cong A(\p2)\otimes A(\p2)\cong\bZ[\eta_1,\eta_2]/(\eta_1^3,\eta_2^3).
$$

The morphisms $\pi_i$ induce maps $p_i\colon F\to\p2$ by restriction, $i=1,2$. Such maps are isomorphic to the canonical map $\bP(\Omega_{\p2}^1(2))\to\p2$. Thinking of the second copy of $\p2$ as the dual of the first one, then $F$ can also be viewed naturally as the flag variety of pairs point--line in $\p2$. Let  $h_i$, $i=1,2$, be the classes of $p_i^*\cO_{\p2}(1)$ in $A^1(F)$ respectively. As for $\Phi$ we have that $h_2$ and $h_1$ are respectively identified with the tautological divisors of $p_1$ and $p_2$, so that the class of the hyperplane divisor on $F$ is $h=h_1+h_2$. The above discussion proves the isomorphisms
$$
A(F)\cong A(\p2)[h_1]/(h_1^2-h_1h_2+h_2^2)\cong \bZ[h_1,h_2]/(h_1^2-h_1h_2+h_2^2, h_1^3,h_2^3).
$$
In particular, $\Pic(F)\cong\bZ^{\oplus2}$ with generators $h_1$ and $h_2$. 

In Section \ref{saCM}, after recalling Hartshorne--Serre correspondence, we give a characterization of aCM line bundles on $F$. In the final part of the section we show that initialized, indecomposable, aCM bundles of rank $2$ on $\Phi$ restrict on $F$ to bundles with the same properties. In particular the descriptions of bundles on $F$ and $\Phi$ are strictly interlaced. We will use such a relationship in the whole paper: indeed, usually, we will first deal with one of the del Pezzo sextics, then using the obtained results for dealing with the other one.

Our first main results are the following (see Sections \ref{sLowerChern6} and \ref{sUpperChern6}).

\medbreak
\noindent
{\bf Theorem A for $F$.}
{\it Let $\cE$ be an indecomposable aCM bundle of rank $2$ on $F$ and let $c_1:=c_1(\cE)=\alpha_1h_1+\alpha_2h_2$. 
Assume that $h^0\big(F,\cE\big)\ne0$ and $h^0\big(F,\cE(-h)\big)=0$.  Then:
  \begin{enumerate}
  \item the zero locus $E:=(s)_0$ of a general section $s\in H^0\big(F,\cE\big)$ has pure codimension $2$ in $F$;
  \item $0\le\alpha_i\le 2$, $i=1,2$.
  \end{enumerate}}
\medbreak

\medbreak
\noindent
{\bf Theorem A for $\Phi$.}
{\it Let $\cG$ be an indecomposable aCM bundle of rank $2$ on $\Phi$ and let $\gamma_1:=c_1(\cG)=\alpha_1\eta_1+\alpha_2\eta_2$. 
Assume that $h^0\big(\Phi,\cG\big)\ne0$ and $h^0\big(\Phi,\cG(-\eta)\big)=0$.  Then:
  \begin{enumerate}
  \item the zero locus $\Sigma:=(\sigma)_0$ of a general section $\sigma\in H^0\big(\Phi,\cG\big)$ has pure codimension $2$ in $\Phi$;
  \item $0\le\alpha_i\le 2$, $i=1,2$.
  \end{enumerate}}
\medbreak

In Section \ref{sExtremal} we provide a complete classification of
bundles on $F$ and $\Phi$ with first Chern class $c_1=0,2h$, $\gamma_1=0,2\eta$ respectively. Finally,  in Section \ref{sIntermediate}, we will determine which intermediate cases are actually
admissible. 

We summarize the second main results in the following simplified statements (for expanded and detailed statements on vector bundles on $F$ and $\Phi$ see Theorems \ref{tline}, \ref{tElliptic},  \ref{tIntermediate3}  and \ref{tplane}, \ref{tdelPezzo}, \ref{tIntermediate4} respectively).

\medbreak
\noindent
{\bf Theorem B for $F$.}
{\it There exists an indecomposable and initalized aCM bundle $\cE$ of rank $2$ on
  $F$ with $h^0\big(F,\cE\big)\ne0$, $h^0\big(F,\cE(-h)\big)=0$ and 
$c_1=\alpha_1h_1+\alpha_2h_2$ if and only if  $(\alpha_1,\alpha_2)$ is one of the following, up to permutations:
$$
(0,0),\quad (0,1),\quad (1,2),\quad (2,2).
$$
Moreover, denote by $E$ the zero--locus of a general section of such an $\cE$. Then
  \begin{enumerate}
  \item if $\alpha_1=\alpha_2=0$, then $E$ is a line and each  line on $F$ can be obtained in such a way;
 \item if $\alpha_1=0$, $\alpha_2=1$, then $E$ is a line, $\cE\cong p_2^*\Omega_{\p2}^1(2h_2)$  and  each line on $F$ can be obtained in such a way;
  \item if $\alpha_1=1$, $\alpha_2=2$, then $E$ is a, possibly reducible, quartic with arithmetic genus $0$ and $\cE\cong p_1^*\Omega_{\p2}^1(2h_1+h_2)$;
 \item if $\alpha_1=\alpha_2=2$, then $E$ is a smooth elliptic normal curve of degree $8$ in $F\subseteq\p7$ and each such a curve in $F$ can be obtained in this way.
  \end{enumerate}}
\medbreak

\medbreak
\noindent
{\bf Theorem B for $\Phi$.}
{\it There exists an indecomposable and initalized aCM bundle $\cG$ of rank $2$ on
  $\Phi$ with $h^0\big(\Phi,\cG\big)\ne0$, $h^0\big(\Phi,\cG(-\eta)\big)=0$ and
$\gamma_1=\alpha_1\eta_1+\alpha_2\eta_2$ if and only if  $(\alpha_1,\alpha_2)$ is one of the following, up to permutations:
$$
(0,0),\quad (0,1),\quad (1,2),\quad (2,2).
$$
Moreover, denote by $\Sigma$ the zero--locus of a general section of such an $\cG$. Then
  \begin{enumerate}
  \item if $\alpha_1=\alpha_2=0$, then $\Sigma$ is a plane and each  plane on $\Phi$ can be obtained in such a way;
 \item if $\alpha_1=0$, $\alpha_2=1$, then $\Sigma$ is a plane, $\cG\cong \pi_2^*\Omega_{\p2}^1(2\eta_2)$ and each  plane on $\Phi$ can be obtained in such a way;
  \item if $\alpha_1=1$, $\alpha_2=2$, then $\Sigma$ is a, possibly reducible, quartic surface and $\cG\cong \pi_1^*\Omega_{\p2}^1(2\eta_1+\eta_2)$;
 \item if $\alpha_1=\alpha_2=2$, then $\Sigma$ is a smooth del Pezzo surface of degree $8$ in $\Phi\subseteq\p8$ isomorphic to the blow up of $\p2$ at a point and each such a surface in $\Phi$ can be obtained in this way.
  \end{enumerate}}
\medbreak

The study of (semi)stability of aCM bundles of rank $2$ on $F$ and $\Phi$ and of the moduli spaces of such (semi)stable bundles will be  the object of a future paper.

Throughout the whole paper we refer to \cite{I--P} and \cite{Ha2} for all the unmentioned definitions, notations and results.

\section{Some facts on aCM locally free sheaves}
\label{saCM}
Throughout the whole paper $k$ will denote an algebraically closed
field of characteristic $0$. Let $X\subseteq \p N$ be a subvariety,
i.e. an integral closed subscheme defined over $k$, of dimension $n
\ge 1$. We set $\cO_X(1):=\cO_{\p N}(1)\otimes\cO_X$. We start this section by recalling two important definitions. 

The variety $X\subseteq \p N$ is called aCM if  the restriction maps $H^0\big(\p N,\cO_{\p N}(t)\big)\to H^0\big(X,\cO_{X}(t)\big)$ are surjective and $h^i\big(X,\cO_{X}(t)\big)=0$, $1\le i\le n-1$. 

The variety $X\subseteq \p N$ is called arithmetically Gorenstein (aG for short) if  it is aCM and $\alpha$--subcanonical, i.e. its dualizing sheaf satisfies $\omega_X\cong\cO_X(\alpha h)$ for some $\alpha\in \bZ$.

In what follows  $X$ will be an aCM, integral and smooth
subvariety of $\p N$ of dimension $n \ge 1$. 

\begin{definition}
  Let $\cF$ be a vector bundle on $X$.
 We say that $\cF$ is {\sl arithmetically Cohen--Macaulay}
  (aCM for short) if  the modules $H^i_*\big(X,\cF\big) = 
\bigoplus_{t\in\bZ}H^i\big(X,\cF(t)\big)
$ vanish for all $i=1,\dots,n-1$.
\end{definition}

If $\cF$ is an aCM bundle, then the minimal number of
generators $m(\cF)$ of $H^0_*\big(X,\cF\big)$ as a module over the
graded coordinate ring of $X$ is $\rk(\cF)\deg(X)$ at
most (e.g. see \cite{C--H1}). The aCM bundles for which the maximum is
attained are  called Ulrich
bundles (see \cite{C--H2}, Definition 2.1 and Lemma 2.2: see also \cite{C--H1}, Definition 3.4
which is slightly different).

\begin{definition}
  Let $\cF$ be a vector bundle on $X$.
  We say that $\cF$ is {\sl initialized} if 
  $$
  \min\{\ t\in\bZ\ \vert\ h^0\big(X,\cF(t)\big)\ne0\ \}=0.
  $$
  We say that $\cF$ is {\sl Ulrich} if it is initialized, aCM and
  $h^0\big(X,\cF\big)=\rk(\cF)\deg(X)$. 
\end{definition}

Let $\cF$ be an Ulrich bundle. On the one hand we know that
$m(\cF)=\rk(\cF)\deg(X)$. On the other hand the generators of
$H^0\big(X,\cF\big)$ are minimal generators of $H^0_*\big(X,\cF\big)$
due to the vanishing of $H^0\big(X,\cF(-1)\big)$. We conclude that
$\cF$ is necessarily globally generated. Several other results are
known for Ulrich bundles (e.g. see \cite{E--S}, \cite{C--H1}, \cite{C--H2},
\cite{C--K--M}).

Let $f$ be a global section of a rank $2$ vector bundle $\cF$. In general its zero--locus
$(f)_0\subseteq X$ is either empty or its codimension is at most
$2$. In this second case, we can always write $(f)_0=C\cup \Delta$
where $C$ has pure codimension $2$ (or it is empty) and $\Delta$ has pure codimension
$1$ (or it is empty). In particular $\cF(-\Delta)$ has a section ${f_\Delta}$ vanishing
on $C$, which is thus locally complete intersection inside $X$. Moreover, the Koszul complex of ${f_\Delta}$ twisted by $\cO_X(\Delta)$ is
\begin{equation}
  \label{seqIdeal}
  0\longrightarrow \cO_X(\Delta)\longrightarrow \cF\longrightarrow \cI_{C\vert X}(c_1(\cF)-\Delta)\longrightarrow 0.
\end{equation}
Moreover we also have the following exact sequence
\begin{equation}
  \label{seqStandard}
  0\longrightarrow \cI_{C\vert X}\longrightarrow \cO_X\longrightarrow \cO_C\longrightarrow 0.
\end{equation}

The above construction can be reversed, because Hartshorne--Serre correspondence holds (for further details see \cite{Vo}, \cite{Ha1}, \cite{Ar}).

\begin{theorem}
  \label{tSerre}
  Let $C\subseteq X$ be a local complete intersection subscheme of codimension $2$ and $\mathcal L$ an invertible sheaf on $X$ such that $H^2\big(X,{\mathcal L}^\vee\big)=0$. If $\det(\cN_{C\vert X})\cong\mathcal L\otimes\cO_C$, then there exists a vector bundle $\cF$ of rank $2$ on $X$ such that:
  \begin{enumerate}
  \item $\det(\cF)\cong\mathcal L$;
  \item $\cF$ has a section $f$ such that $C$ coincides with the zero locus $(f)_0$ of $f$.
  \end{enumerate}
  Moreover, if $H^1\big(X,{\mathcal L}^\vee\big)= 0$, the two conditions above determine $\cF$ up to isomorphism.
\end{theorem}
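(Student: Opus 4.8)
The plan is to prove the Hartshorne–Serre correspondence (Theorem \ref{tSerre}) by realizing the desired rank $2$ bundle $\cF$ as an extension of the ideal sheaf $\cI_{C\vert X}$ twisted by $\mathcal L$ by the structure sheaf $\cO_X$. The starting point is the observation that any such extension
\begin{equation*}
0\longrightarrow \cO_X\longrightarrow \cF\longrightarrow \cI_{C\vert X}\otimes\mathcal L\longrightarrow 0
\end{equation*}
automatically produces a sheaf $\cF$ with a distinguished section (the image of $1\in H^0(X,\cO_X)$) whose zero locus is exactly $C$, and whose determinant is $\mathcal L$. Extensions of this shape are classified by $\mathrm{Ext}^1_{\cO_X}\big(\cI_{C\vert X}\otimes\mathcal L,\cO_X\big)$, so the heart of the argument is to produce a class in this group that yields a \emph{locally free} $\cF$, and to show that the freeness is governed precisely by the hypothesis $\det(\cN_{C\vert X})\cong\mathcal L\otimes\cO_C$.

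First I would compute the relevant $\mathrm{Ext}$ group using local-to-global machinery. The key device is the local-to-global spectral sequence, or equivalently the five-term exact sequence
\begin{equation*}
0\to H^1\big(X,\mathcal{H}om(\cI_{C\vert X}\otimes\mathcal L,\cO_X)\big)\to \mathrm{Ext}^1\big(\cI_{C\vert X}\otimes\mathcal L,\cO_X\big)\to H^0\big(X,\mathcal{E}xt^1(\cI_{C\vert X}\otimes\mathcal L,\cO_X)\big)\to H^2(\dots).
\end{equation*}
Since $C$ is a local complete intersection of codimension $2$, the local sheaves $\mathcal{E}xt^i$ vanish except in degree $i=2$, where $\mathcal{E}xt^2(\cO_C,\cO_X)\cong\det(\cN_{C\vert X})$; twisting through by $\mathcal L^\vee$ this identifies the global sections term with $H^0\big(C,\det(\cN_{C\vert X})\otimes\mathcal L^\vee\otimes\cO_C\big)$. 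The hypothesis $\det(\cN_{C\vert X})\cong\mathcal L\otimes\cO_C$ makes this a section of $\cO_C$, and the image of the \emph{constant} section $1$ is exactly the extension class one wants: an extension $\cF$ is locally free if and only if its local component at every point of $C$ generates $\mathcal{E}xt^2$, which for the section $1$ is automatic. The vanishing hypothesis $H^2(X,\mathcal L^\vee)=0$ enters to control the obstruction term $H^2$ in the five-term sequence, guaranteeing that the local nowhere-vanishing section actually lifts to a genuine global extension class.

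Having produced a locally free $\cF$ with the two asserted properties, the remaining task is the uniqueness statement under the stronger hypothesis $H^1(X,\mathcal L^\vee)=0$. Here I would argue that the additional vanishing forces $H^1\big(X,\mathcal{H}om(\cI_{C\vert X}\otimes\mathcal L,\cO_X)\big)=0$ — the kernel term in the five-term sequence — so that the restriction map $\mathrm{Ext}^1\to H^0(C,\cO_C)$ becomes injective. Consequently the extension class is determined up to scalar by the requirement that its image be a nowhere-vanishing constant, and different such classes differ only by a unit, yielding isomorphic bundles $\cF$.

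The main obstacle, and the step demanding the most care, is the local computation identifying $\mathcal{E}xt^2(\cO_C,\cO_X)$ with $\det(\cN_{C\vert X})$ and verifying that local freeness of $\cF$ is equivalent to the local extension class being a generator at each point of $C$. This is where the local complete intersection hypothesis is indispensable: it allows a Koszul resolution of $\cO_C$ from a local regular sequence of length $2$, making the $\mathcal{E}xt$ sheaves computable and pinning down the precise correspondence between a nowhere-vanishing section of $\det(\cN_{C\vert X})\otimes\mathcal L^\vee\otimes\cO_C$ and a locally split (hence locally free) extension. Rather than reprove this standard but delicate fact from scratch, I would invoke the references \cite{Vo}, \cite{Ha1}, \cite{Ar} cited in the excerpt for the detailed local analysis, and concentrate the written argument on how the two cohomological vanishing hypotheses on $\mathcal L^\vee$ yield existence and uniqueness respectively.
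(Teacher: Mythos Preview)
The paper does not actually supply a proof of Theorem~\ref{tSerre}: it is quoted as a known result, with the parenthetical ``for further details see \cite{Vo}, \cite{Ha1}, \cite{Ar}'' immediately preceding the statement. Your sketch via the local--to--global spectral sequence and the five--term exact sequence
\[
0\to H^1(X,\mathcal L^\vee)\to \Ext^1\big(\cI_{C\vert X}\otimes\mathcal L,\cO_X\big)\to H^0\big(C,\det(\cN_{C\vert X})\otimes\mathcal L^\vee\big)\to H^2(X,\mathcal L^\vee)
\]
is exactly the standard argument presented in those references (notably Arrondo's exposition \cite{Ar}), so your approach coincides with what the paper implicitly invokes.

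One small point of precision worth tightening: in your five--term sequence the sheaf $\mathcal{H}om(\cI_{C\vert X}\otimes\mathcal L,\cO_X)$ is isomorphic to $\mathcal L^\vee$ (because $C$ has codimension $2$), and the term $\mathcal{E}xt^1(\cI_{C\vert X}\otimes\mathcal L,\cO_X)$ is identified with $\mathcal{E}xt^2(\cO_C,\cO_X)\otimes\mathcal L^\vee$ via the degree shift coming from $0\to\cI_C\to\cO_X\to\cO_C\to0$. You use both facts correctly in the end, but the sentence ``the local sheaves $\mathcal{E}xt^i$ vanish except in degree $i=2$'' momentarily conflates the $\mathcal{E}xt$'s of $\cO_C$ with those of $\cI_C$; making that shift explicit would remove any ambiguity.
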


From now on we will focus our attention on the del Pezzo fourfold $\Phi\subseteq\p8$ of degree $6$ and its general hyperplane section $F\subseteq\p7$ which are both aCM varieties. The aim of the remaining part of to inspect the relations among aCM bundles of ranks $1$ and $2$ on $F$ and $\Phi$.

As pointed out in the Introduction
$$
A(\Phi)\cong \bZ[\eta_1,\eta_2]/(\eta_1^3,\eta_2^3),\qquad A(F)\cong \bZ[h_1,h_2]/(h_1^2-h_1h_2+h_2^2, h_1^3,h_2^3).
$$

We recall the following representation of $F$. Let $B\subseteq \GL_3$ be the subgroup of upper triangular matrices. The group $B$ is mapped by the natural morphism $\GL_3\to\PGL_3$ onto the subgroup fixing both the point $[1,0,0]$ and the line $\{\ x_2=0\ \}$. It follows the existence of a natural isomorphism $F\cong \GL_3/B$. Such a representation of $F$ is particularly helpful, because it allows us to state the following consequence of Borel--Weil--Bott Theorem (see \cite{De},Theorem 2) in order to compute the cohomology of the line bundles on $F$. 

\begin{theorem}
\label{tBott}
If $\mathcal L\in\Pic(F)$, then $h^i\big(F,\mathcal L\big)\ne0$ for at most one $i=0,\dots,3$.
\end{theorem}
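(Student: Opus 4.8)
The plan is to exploit the isomorphism $F\cong\GL_3/B$ recorded above, which exhibits $F$ as the complete flag manifold of $\bC^3$ (of type $A_2$), and to invoke Bott's theorem in the form given in \cite{De}. First I would recall that on a complete flag variety $G/B$ the Picard group is identified with the weight lattice $\Lambda$ of the (simply connected cover of the) group: every line bundle is of the form $\mathcal L_\lambda$ for a unique weight $\lambda\in\Lambda$, the global sections recovering the Borel--Weil description of irreducible representations. For $\GL_3$ the lattice $\Lambda$ is free of rank $2$, generated by the two fundamental weights $\omega_1,\omega_2$, and under the projections $p_i\colon F\to\p2$ these correspond to the classes $h_i=p_i^*\cO_{\p2}(1)$. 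Thus the isomorphism $\Pic(F)\cong\bZ^{\oplus2}$ with generators $h_1,h_2$ matches the weight lattice $\bZ\omega_1\oplus\bZ\omega_2$, and writing $\mathcal L=\alpha_1h_1+\alpha_2h_2$ corresponds to the weight $\lambda=\alpha_1\omega_1+\alpha_2\omega_2$.

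With this dictionary in place the statement is a direct translation of Bott's theorem. Let $\rho=\omega_1+\omega_2$ be the half--sum of the positive roots and let $W=S_3$ act on $\Lambda$ by the usual shifted (dotted) action. The theorem splits into two mutually exclusive cases. If $\lambda+\rho$ is singular, i.e. fixed by some nontrivial element of $W$ (equivalently orthogonal to a root), then $H^i\big(F,\mathcal L\big)=0$ for every $i$. Otherwise $\lambda+\rho$ is regular, there is a unique $w\in W$ rendering $w(\lambda+\rho)$ strictly dominant, and the cohomology is concentrated in the single degree $\ell(w)$, namely $H^i\big(F,\mathcal L\big)=0$ for $i\ne\ell(w)$ while $H^{\ell(w)}\big(F,\mathcal L\big)\ne0$. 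In either case at most one cohomology group is nonzero; since $\dim F=3$ and the lengths $\ell(w)$ for $w\in S_3$ range in $\{0,1,2,3\}$, these are exactly the indices $i=0,\dots,3$ appearing in the statement.

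The argument therefore reduces entirely to quoting \cite{De}, and the only delicate point I anticipate is the bookkeeping of conventions: making sure that the generators $h_1,h_2$ of $\Pic(F)$ are correctly matched with the fundamental weights, that $\rho$ is taken to be $\omega_1+\omega_2$, and that the Weyl group acts through the shifted action. I would stress, however, that for the qualitative assertion proved here---that nonvanishing cohomology occurs in at most one degree---the precise identification of $\lambda$ with $\mathcal L$ is immaterial: the regular/singular dichotomy holds for \emph{every} weight, so the conclusion follows for every line bundle on $F$ regardless of the normalization chosen. The finer information (which degree carries the cohomology, and the actual value of $h^{\ell(w)}$) would require the exact dictionary, and I expect it to be invoked later when specific line bundles on $F$ are analyzed.
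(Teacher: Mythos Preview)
Your proposal is correct and matches the paper's approach: the paper simply records this theorem as a direct consequence of Borel--Weil--Bott via \cite{De}, Theorem~2, without spelling out the details, so your write-up is if anything more explicit than what the authors provide. One small imprecision: the weight lattice of $\GL_3$ has rank $3$, not $2$; what you mean is the weight lattice of the semisimple part (equivalently the character lattice of $B/Z(\GL_3)$), and since $\Pic(G/B)$ only sees this rank $2$ lattice your argument is unaffected.
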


We have the following standard restriction exact sequence
\begin{equation*}
0\longrightarrow \cO_{\Phi}(\alpha_1\eta_1+\alpha_2\eta_2-\eta)\longrightarrow \cO_{\Phi}(\alpha_1\eta_1+\alpha_2\eta_2)\longrightarrow \cO_{F}(\alpha_1h_1+\alpha_2h_2)\longrightarrow0
\end{equation*}
for each $ \cO_{F}(\alpha_1h_1+\alpha_2h_2)\in\Pic(F)$. Since the cohomology of $\cO_{\Phi}(\alpha_1\eta_1+\alpha_2\eta_2)$ vanishes in odd dimension thanks to K\"unneth formulas, it follows that
\begin{equation*}
\begin{aligned}
h^1\big(F,\cO_F(\alpha_1h_1+\alpha_2h_2)\big)-h^2\big(F,\cO_F(\alpha_1h_1+\alpha_2h_2)\big)&=\\
=h^2\big(\Phi,\cO_{\Phi}(\alpha_1\eta_1+\alpha_2\eta_2-\eta\big)-h^2&\big(\Phi,\cO_{\Phi}(\alpha_1\eta_1+\alpha_2\eta_2)\big).
\end{aligned}
\end{equation*}
Thanks to Theorem \ref{tBott}, only one among $h^1\big(F,\cO_F(\alpha_1h_1+\alpha_2h_2)\big)$ and $h^2\big(F,\cO_F(\alpha_1h_1+\alpha_2h_2)\big)$ in non--zero, according to the sign of the second member of the equality above.

More precisely we have the following Proposition.

\begin{proposition}
\label{pLineBundle}
For each $\alpha_1,\alpha_2\in\bZ$ with $\alpha_1\le \alpha_2$, we have $$
h^i\big(F,\cO_F(\alpha_1h_1+\alpha_2h_2)\big)\ne0
$$
if and only if
\begin{itemize}
\item $i=0$ and $\alpha_1\ge0$;
\item $i=1$ and $\alpha_1\le -2$, $\alpha_1+\alpha_2+1\ge0$;
\item $i=2$ and $\alpha_2\ge0$, $\alpha_1+\alpha_2+3\le0$;
\item $i=3$ and $\alpha_2\le -2$.
\end{itemize}
In all these cases
$$
h^i\big(F,\cO_F(\alpha_1h_1+\alpha_2h_2)\big)=(-1)^i\frac{(\alpha_1+1)(\alpha_2+1)(\alpha_1+\alpha_2+2)}{2}.
$$
\end{proposition}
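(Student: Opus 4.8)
The plan is to reduce everything to two ingredients: the vanishing statement of Theorem \ref{tBott}, which guarantees that at most one of the four cohomology groups is nonzero, and the Euler characteristic $\chi\big(F,\cO_F(\alpha_1h_1+\alpha_2h_2)\big)$. Once we know that a single index $i$ contributes, we automatically have $h^i=(-1)^i\chi$ for that index, and $\chi=0$ forces all cohomology to vanish; hence the whole statement splits into (a) computing $\chi$ in closed form and (b) pinning down which index $i$ is the relevant one in each region of the $(\alpha_1,\alpha_2)$--plane.

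For (a) I would use the restriction sequence displayed just before the statement together with the K\"unneth formula. Writing $\eta=\eta_1+\eta_2$, that sequence expresses $\chi\big(F,\cO_F(\alpha_1h_1+\alpha_2h_2)\big)$ as the difference $\chi\big(\Phi,\cO_\Phi(\alpha_1\eta_1+\alpha_2\eta_2)\big)-\chi\big(\Phi,\cO_\Phi((\alpha_1-1)\eta_1+(\alpha_2-1)\eta_2)\big)$. Since $\Phi=\p2\times\p2$, each term factors as a product of the Euler characteristics $\binom{a+2}{2}$ on the two factors, and a direct simplification gives
\begin{equation*}
\chi\big(F,\cO_F(\alpha_1h_1+\alpha_2h_2)\big)=\frac{(\alpha_1+1)(\alpha_2+1)(\alpha_1+\alpha_2+2)}{2}.
\end{equation*}
This is exactly the asserted formula, and it vanishes precisely on the union $\{\alpha_1=-1\}\cup\{\alpha_2=-1\}\cup\{\alpha_1+\alpha_2=-2\}$, on which all cohomology is therefore zero.

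For (b) I would settle the extreme indices first. For $i=0$: if $\alpha_1\ge0$ then, since $\alpha_1\le\alpha_2$, both coefficients are nonnegative, so the bundle is globally generated and $h^0\ne0$; conversely, restricting to a fibre of $p_2$, which is a line $\cong\p1$ on which the bundle becomes $\cO_{\p1}(\alpha_1)$, shows that $\alpha_1<0$ forces every global section to vanish. For $i=3$: adjunction for the hyperplane section $F\subset\Phi$ gives $\omega_F\cong\cO_F(-2h_1-2h_2)$, so Serre duality identifies $h^3\big(F,\cO_F(\alpha_1h_1+\alpha_2h_2)\big)$ with $h^0\big(F,\cO_F((-2-\alpha_1)h_1+(-2-\alpha_2)h_2)\big)$, which by the $i=0$ analysis is nonzero exactly when $\alpha_2\le-2$. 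In the complementary range the nonzero index can only be $1$ or $2$, and here the sign of $\chi$ decides the parity: since $h^i=(-1)^i\chi$, one has $i=1$ when $\chi<0$ and $i=2$ when $\chi>0$. Evaluating the sign of the product $(\alpha_1+1)(\alpha_2+1)(\alpha_1+\alpha_2+2)$ on this range then yields precisely the inequalities listed for $i=1$ and $i=2$.

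The computations in (a) are entirely routine, so the only point requiring care is the bookkeeping in (b): one must check that the four regions, together with the vanishing locus of $\chi$, tile the whole $(\alpha_1,\alpha_2)$--plane (under $\alpha_1\le\alpha_2$) without overlap, and that the degenerate cases $\alpha_1=-1$, $\alpha_2=-1$, $\alpha_1+\alpha_2=-2$ are correctly absorbed into the vanishing of $\chi$. I expect this case analysis to be the main, though not severe, obstacle. As an alternative one could bypass (b) altogether by identifying $\cO_F(\alpha_1h_1+\alpha_2h_2)$ with the line bundle on $\GL_3/B$ attached to the weight $\alpha_1\omega_1+\alpha_2\omega_2$ and reading off the cohomological degree directly from Borel--Weil--Bott as the number of inversions of $\lambda+\rho$; but the Euler characteristic route above is more self--contained given the tools already assembled.
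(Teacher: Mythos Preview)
Your argument is correct and follows essentially the strategy the paper sketches just before the statement: combine Theorem~\ref{tBott} with the restriction sequence to $\Phi$ and K\"unneth on $\Phi=\p2\times\p2$. The only mild difference is organisational. The paper isolates $h^1(F)-h^2(F)$ directly from the long exact sequence (the odd cohomology of line bundles on $\Phi$ vanishes, so this difference equals $h^2\big(\Phi,\cO_\Phi(\alpha_1\eta_1+\alpha_2\eta_2-\eta)\big)-h^2\big(\Phi,\cO_\Phi(\alpha_1\eta_1+\alpha_2\eta_2)\big)$), whereas you package the same information into the Euler characteristic and then settle $h^0$ and $h^3$ by an ad hoc restriction-to-a-fibre argument and Serre duality. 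Both routes amount to the same computation; your treatment of $h^0$ and $h^3$ is perhaps slightly more hands-on but entirely valid, and the sign-of-$\chi$ trick for distinguishing $i=1$ from $i=2$ is equivalent to reading the sign of the paper's $h^1-h^2$ expression.
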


An helpful tool for summarizing the first part of the above proposition is the following picture dealing with the subsets of the plane $\alpha_1\alpha_2$ whose points correspond to some non--zero cohomology group of the sheaf $\cO_F(\alpha_1h_1+\alpha_2h_2)$ (without restriction on $\alpha_1$ and $\alpha_2$).

\centerline{
\setlength{\unitlength}{0.5cm}
\begin{picture}(20,23)(-10,-12)
\multiput(-9,0)(1,0){18}{\line(1,0){0.5}}
\put(9,0){\vector(1,0){0.5}}
\put(8.7,0.3){\tiny$x_1$}
\multiput(0,-9)(0,1){18}{\line(0,1){0.5}}
\put(0,9){\vector(0,1){0.5}}
\put(0.3,9){\tiny$x_2$}
\put(-2,1){\line(-1,1){6}}
\put(0,-3){\line(1,-1){6}}
\put(-7.8,7){\tiny$x_1+x_2+1=0$}
\put(-2,1){\line(0,1){8}}
\put(0,-3){\line(0,-1){6}}
\put(-4.4,-8){\tiny$x_1=-2$}
\put(-4.7,4){$h^1\ne0$}
\put(0,0){\line(0,1){9}}
\put(0,0){\line(1,0){9}}
\put(3,2.5){$h^0\ne0$}
\put(-2,-2){\line(0,-1){7}}
\put(-8,-2.6){\tiny$x_2=-2$}
\put(-2,-2){\line(-1,0){7}}
\put(-7,-5.5){$h^3\ne0$}
\put(-3,0){\line(-1,1){6}}
\put(1,-2){\line(1,-1){6}}
\put(1.3,-9){\tiny$x_1+x_2+3=0$}
\put(-3,0){\line(-1,0){6}}
\put(1,-2){\line(1,0){8}}
\put(0.5,-6.7){$h^1\ne0$}
\put(-7,0.5){$h^2\ne0$}
\put(3,-3.5){$h^2\ne0$}
\put(-1.5,-11){\rm{Figure 1}}
\end{picture}}

The points corresponding to the direct summands of $H^i_*\big(F,\cO_F(\alpha_1h_1+\alpha_2h_2)\big)$ lie on the line through $(\alpha_1,\alpha_2)$ and parallel to $x_1-x_2=0$, hence the following two corollaries are immediate.

\begin{corollary}
\label{cNonVanishing}
If $\alpha_1,\alpha_2\in\bZ$, then
$$
H^1_*\big(F,\cO_F(\alpha_1h_1+\alpha_2h_2)\big)\ne0\quad\Leftrightarrow\quad H^2_*\big(F,\cO_F(\alpha_1h_1+\alpha_2h_2)\big)\ne0.
$$
\end{corollary}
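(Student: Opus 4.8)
The claim to prove is Corollary \ref{cNonVanishing}: for $\alpha_1,\alpha_2\in\bZ$, the module $H^1_*\big(F,\cO_F(\alpha_1h_1+\alpha_2h_2)\big)$ is nonzero if and only if $H^2_*\big(F,\cO_F(\alpha_1h_1+\alpha_2h_2)\big)$ is nonzero.

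The plan is to reduce the statement about the graded modules $H^i_*$ to the pointwise description of cohomology furnished by Proposition \ref{pLineBundle}. First I would observe that $H^i_*\big(F,\cO_F(\alpha_1h_1+\alpha_2h_2)\big)=\bigoplus_{t\in\bZ}H^i\big(F,\cO_F((\alpha_1+t)h_1+(\alpha_2+t)h_2)\big)$, since twisting by the hyperplane class $h=h_1+h_2$ shifts both coefficients simultaneously by $t$. As the text just before the corollary notes, the summands therefore correspond exactly to the integer lattice points lying on the line through $(\alpha_1,\alpha_2)$ parallel to $x_1-x_2=0$, i.e. the points $(\alpha_1+t,\alpha_2+t)$. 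So the two modules are nonzero precisely when this diagonal line meets the region $\{h^1\ne0\}$, respectively the region $\{h^2\ne0\}$, of Figure 1. The whole claim thus becomes the purely combinatorial assertion that a diagonal line of slope $1$ meets the $h^1$ region if and only if it meets the $h^2$ region.

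Next I would make this combinatorial reduction explicit using Proposition \ref{pLineBundle}, working under the normalization $\alpha_1\le\alpha_2$ (the general case follows by the symmetry $h_1\leftrightarrow h_2$, which fixes $h$ and the diagonal). Writing $d:=\alpha_2-\alpha_1\ge0$ for the constant difference along the diagonal, a point $(a,a+d)$ with $a\le a+d$ satisfies $h^1\ne0$ exactly when $a\le-2$ and $2a+d+1\ge0$, and satisfies $h^2\ne0$ exactly when $a+d\ge0$ and $2a+d+3\le0$. I would then show that, as $a$ ranges over $\bZ$ with $d$ fixed, an integer solution to the first system exists if and only if an integer solution to the second system exists. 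The $h^1$ system gives $-2-\tfrac{d-1}{2}\le \tfrac{-d-1}{2}\le a\le-2$, which is solvable precisely when $\lceil(-d-1)/2\rceil\le-2$, i.e. when $d\ge3$; similarly the $h^2$ system gives $-d\le a\le-\tfrac{d+3}{2}$, which is solvable precisely when $-d\le\lfloor-(d+3)/2\rfloor$, again exactly $d\ge3$. Thus both modules are nonzero if and only if $d=\lvert\alpha_1-\alpha_2\rvert\ge3$, proving the equivalence.

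The only delicate point is the integer-solvability bookkeeping: the inequalities defining the two regions involve the floor and ceiling that arise from the parity of $d$, so I must check the small cases $d=0,1,2$ separately to confirm neither region is met, and confirm that $d\ge3$ always admits an integer $a$ in both ranges. I expect the parity analysis to be the main (though entirely elementary) obstacle, since a naive comparison of the rational endpoints could hide an off-by-one error. Once the threshold $d\ge3$ is verified symmetrically for both the $h^1$ and the $h^2$ systems, the corollary follows immediately.
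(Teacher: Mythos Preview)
Your proposal is correct and follows essentially the same approach as the paper: both reduce the statement to checking that a diagonal line $x_2=x_1+d$ meets the $h^1$ region of Figure~1 if and only if it meets the $h^2$ region. The paper's proof is a one-liner appealing directly to Figure~1, whereas you spell out the inequalities from Proposition~\ref{pLineBundle} explicitly and extract the common threshold $\lvert\alpha_1-\alpha_2\rvert\ge3$; this is the same argument made more detailed, not a different route.
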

\begin{proof}
Looking at Figure 1, one immediately checks that the line $x_2=x_1+c$ intersects the subset $h^1\ne0$ if and only if it intersects the subset $h^2\ne0$.
\end{proof}

The following second important corollary shows that initialized aCM (resp. Ulrich) line bundles on $F$ are exactly the restrictions of initialized aCM (resp. Ulrich) line bundles on $\Phi$.

\begin{corollary}
\label{cLineBundle}
If $\alpha_1,\alpha_2\in\bZ$, then $\cO_F(\alpha_1h_1+\alpha_2h_2)$ is initialized and aCM if and only if $(\alpha_1,\alpha_2)$ is one of the following:
$$
(0,0),\quad(0,1),\quad(1,0),\quad(0,2),\quad(2,0).
$$
Moreover, $\cO_F(\alpha_1h_1+\alpha_2h_2)$ is Ulrich if and only if
$(\alpha_1,\alpha_2)$ is one of the following:
$$
(0,2),\quad(2,0).
$$
\end{corollary}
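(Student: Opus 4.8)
The plan is to read off everything from Proposition \ref{pLineBundle}, which records exactly when each $h^i\big(F,\cO_F(a h_1+b h_2)\big)$ is nonzero. Write $\mathcal L:=\cO_F(\alpha_1 h_1+\alpha_2 h_2)$, so that $\mathcal L(t)=\cO_F((\alpha_1+t)h_1+(\alpha_2+t)h_2)$ because $h=h_1+h_2$; thus twisting by $\cO_F(h)$ translates the lattice point $(\alpha_1,\alpha_2)$ along the direction $(1,1)$, i.e. along a line parallel to $x_1-x_2=0$ in Figure 1. Since $\dim F=3$, the bundle $\mathcal L$ is aCM precisely when $H^1_*\big(F,\mathcal L\big)=H^2_*\big(F,\mathcal L\big)=0$, and by Corollary \ref{cNonVanishing} it suffices to impose $H^1_*\big(F,\mathcal L\big)=0$. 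Because the cohomology in Proposition \ref{pLineBundle} is symmetric in $(\alpha_1,\alpha_2)$ (reflecting the point--line duality swapping the two factors of $\p2$), I may assume $\alpha_1\le\alpha_2$ throughout and restore the missing pairs at the end by permutation.

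First I would pin down the aCM condition. With $a\le b$ the criterion $h^1\big(F,\cO_F(a h_1+b h_2)\big)\ne0$ reads $a\le -2$ and $a+b+1\ge0$. Setting $a=\alpha_1+t$, $b=\alpha_2+t$ and $c:=\alpha_2-\alpha_1\ge0$, this amounts to the existence of an integer $a$ with $-(c+1)/2\le a\le -2$. Such an $a$ exists if and only if $c\ge3$ (when $c\ge3$ the integer $a=-2$ always lies in the interval, while for $c\le2$ one has $-(c+1)/2>-2$ and the interval is empty), so $H^1_*\big(F,\mathcal L\big)=0$ exactly when $c=\alpha_2-\alpha_1\le2$, i.e. $|\alpha_1-\alpha_2|\le2$. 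For the initialization, Proposition \ref{pLineBundle} gives $h^0\big(F,\mathcal L(t)\big)\ne0$ if and only if $\min(\alpha_1+t,\alpha_2+t)\ge0$, i.e. $t\ge-\min(\alpha_1,\alpha_2)$; hence $\mathcal L$ is initialized exactly when $\min(\alpha_1,\alpha_2)=0$. Combining the two conditions under $\alpha_1\le\alpha_2$ forces $\alpha_1=0$ and $0\le\alpha_2\le2$, giving $(0,0),(0,1),(0,2)$, and restoring permutations adds $(1,0),(2,0)$, which is the asserted list.

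Finally, for the Ulrich statement I would invoke the explicit dimension formula in Proposition \ref{pLineBundle}, namely $h^0\big(F,\mathcal L\big)=\tfrac{1}{2}(\alpha_1+1)(\alpha_2+1)(\alpha_1+\alpha_2+2)$ whenever $\min(\alpha_1,\alpha_2)\ge0$. Since $\rk(\mathcal L)=1$ and $\deg(F)=6$, the bundle $\mathcal L$ is Ulrich if and only if it is initialized, aCM, and $h^0\big(F,\mathcal L\big)=6$. Evaluating the formula on the five admissible pairs yields $1,3,3,6,6$ respectively, so the value $6$ is attained only at $(0,2)$ and $(2,0)$, as claimed. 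The whole argument is routine once Proposition \ref{pLineBundle} is available; the only point demanding care is the integer--point argument that converts the vanishing of $H^1_*$ into the clean inequality $|\alpha_1-\alpha_2|\le2$, where one must verify that the diagonal line genuinely meets the region $h^1\ne0$ of Figure 1 precisely when $\alpha_2-\alpha_1\ge3$.
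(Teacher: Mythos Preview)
Your proof is correct and follows essentially the same approach as the paper's: both arguments use Proposition \ref{pLineBundle} to see that initialization forces $\min(\alpha_1,\alpha_2)=0$, and that the diagonal line through $(\alpha_1,\alpha_2)$ avoids the $h^1\ne0$ region precisely when $|\alpha_1-\alpha_2|\le2$, with the Ulrich part being a direct evaluation of $h^0$. Your write-up simply makes the integer-point verification explicit where the paper appeals to Figure~1, and you invoke Corollary~\ref{cNonVanishing} to dispense with $H^2_*$ separately, whereas the paper handles both regions at once via the picture.
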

\begin{proof}
On the one hand the sheaf $\cO_F(\alpha_1h_1+\alpha_2h_2)$ is initialized if and only if $\alpha_1$ and $\alpha_2$ are non--negative and $\alpha_1\alpha_2=0$.
On the other hand, looking at Figure 1, one checks that the line $x_2=x_1+c$ does not intersect the subsets $h^1\ne0$ and $h^2\ne0$ if and only if $-2\le c\le2$.
Thus the first part of the statement follows.

The second part of the statement is an easy direct computation.
\end{proof}

The following remark will be used in the paper.

\begin{remark}
\label{rRestriction}
Let $\cG$ be an aCM bundle of rank $2$ on $\Phi$ and let $\cE:=\cG\otimes \cO_F$ be its restriction. By computing the cohomology of sequence
\begin{equation}
\label{seqRestriction}
0\longrightarrow \cG(-\eta)\longrightarrow \cG\longrightarrow \cE\longrightarrow 0
\end{equation}
twisted by $\cO_\Phi(t\eta)$ one checks that $\cE$ is aCM too.

If $\cG$ is initialized it is immediate to check that the same is true for $\cE$. Moreover the restriction morphism induced by the inclusion $F\subseteq\Phi$ yields an isomorphism $H^0\big(\Phi,\cG\big)\cong H^0\big(F,\cE\big)$. In this isomorphism the zero loci of the corresponding sections $\sigma\in H^0\big(\Phi,\cG\big)$ and $s\in H^0\big(F,\cE\big)$ satisfy $(s)_0=F\cap (\sigma)_0$.

Conversely if $\cE$ is initialized, the cohomology of Sequence \eqref{seqRestriction} twisted by $\cO_\Phi(t\eta)$ yields that $h^0\big(\Phi,\cG(t\eta)\big)=h^0\big(\Phi,\cG(-\eta)\big)$ when $t\le -1$. We conclude that $h^0\big(\Phi,\cG(-\eta)\big)=0$. Moreover $h^0\big(\Phi,\cG\big)=h^0\big(F,\cE\big)\ne0$, because $\cE$ is assumed to be initialized. It follows that also $\cG$ is initialized.

Finally it also follows that $\cG$ is Ulrich if and only if the same holds for $\cE$, because $\deg(\Phi)=\deg(F)$ and $\rk(\cG)=\rk(E)$.
\end{remark}

Let $\cG$ be an initialized, aCM bundle of rank $2$ on $\Phi$. Thanks to the above remark, we know that $\cE:=\cG\otimes\cO_F$ is initialized and aCM too. We will prove in the last part of this section that if $\cE$ is decomposable, then the same is true for $\cG$. To this purpose assume that $\cE$ splits as a sum of line bundles. Such line bundles are necessarily aCM and they are either both initialized or one of them is initialized and the other one has no sections. In particular we can assume that
$$
\cE\cong\cO_F(a_2h_2)\oplus\cO_F(b_1h_1+b_2h_2)
$$
where $0\le a_2\le 2$, $\vert b_1-b_2\vert\le 2$ and at least one of the $b_i$'s is not positive.

Twisting Sequence \eqref{seqRestriction} by $\cO_\Phi(-a_2\eta_2)$ we obtain sequence
\begin{equation}
\label{seqRestrictionTwisted}
0\longrightarrow \cG(-\eta-a_2\eta_2)\longrightarrow \cG(-a_2\eta_2)\longrightarrow \cO_F\oplus\cO_F(b_1h_1+(b_2-a_2)h_2)\longrightarrow 0
\end{equation}

We first assume $h^1\big(F,\cO_F((t+b_1)h_1+(t+b_2-a_2)h_2)\big)=0$ for each $t\le-1$. Hence the cohomology of the above sequence twisted by $\cO_\Phi(t\eta)$ implies the existence of surjective maps
$$
H^1\big(\Phi,\cG(t\eta-a_2\eta_2)\big)\twoheadrightarrow H^1\big(\Phi,\cG((t+1)\eta-a_2\eta_2)\big)
$$
for each $t\le-1$. The vanishing $h^1\big(\Phi,\cG(t\eta-a_2\eta_2)\big)=0$ for $t\ll-1$, thus forces $h^1\big(\Phi,\cG(t\eta-a_2\eta_2)\big)=0$ for each $t\le-1$. It follows that $h^0\big(\Phi,\cG(-a_2\eta_2)\big)=1+h^0\big(F,\cO_F(b_1h_1+(b_2-a_2)h_2)\big)\ne0$.

If $\sigma\in H^0\big(\Phi,\cG(-a_2\eta_2)\big)$ is a general section then we can write $(\sigma)_0=\Theta\cup\Xi\subseteq\Phi$, where $\Theta$ has pure codimension $2$ (or it is empty) and $\Xi$ is an effective divisor (or it is empty).

Consider the restriction morphism
$$
\psi\colon H^0\big(\Phi,\cG(-a_2\eta_2)\big)\to H^0\big(F,\cO_F\oplus\cO_F(b_1h_1+(b_2-a_2)h_2)\big).
$$
The cohomology of Sequence \eqref{seqRestrictionTwisted} and the vanishing $h^1\big(\Phi,\cG(-\eta-a_2\eta_2)\big)=0$ imply that $\psi$ is an isomorphism because we are assuming that $\cG$ is initialized. Thus $s:=\psi(\sigma)\in H^0\big(F,\cO_F\oplus\cO_F(b_1h_1+(b_2-a_2)h_2)\big)$ is general. We conclude that $(\sigma)_0\cap F=(s)_0=\emptyset$. Thus $(\sigma)_0=\emptyset$ too. In particular Sequence \eqref{seqIdeal} for $\sigma$ is
$$
0\longrightarrow\cO_\Phi\longrightarrow\cG(-a_2\eta_2)\longrightarrow\cO_\Phi(c_1(\cG)-2a_2\eta_2)\longrightarrow0.
$$
Since it corresponds to an element of
$$
\Ext^1_\Phi\big(\cO_\Phi(c_1(\cG)-2a_2\eta_2),\cO_\Phi\big)\cong H^1\big(\Phi,\cO_\Phi(2a_2\eta_2-c_1(\cG))
\big).
$$
The last space is zero, due to K\"unneth formula, thus the sequence above splits, whence $\cG\cong \cO_\Phi(a_2\eta_2)\oplus\cO_\Phi(c_1(\cG)-a_2\eta_2)$.

Now assume that there is at least one $t\le-1$ such that $h^1\big(F,\cO_F((t+b_1)h_1+(t+b_2-a_2)h_2)\big)\ne0$. Only one case is actually admissible as the following lemma shows.

\begin{lemma}
\label{lVanishing}
Let $a_2,b_1,b_2\in\bZ$ be such that $0\le a_2\le 2$, $\vert b_1-b_2\vert\le 2$ and at least one of the $b_i$'s is not positive. There is $t\le -1$ such that  $h^1\big(F,\cO_F((t+b_1)h_1+(t+b_2-a_2)h_2)\big)\ne0$, if and only if $a_2=1$, $b_1=2$, $b_2=0$ and $t=-1$.
\end{lemma}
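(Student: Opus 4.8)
The condition to analyze is when the line bundle $\cO_F\big((t+b_1)h_1+(t+b_2-a_2)h_2\big)$ has nonvanishing $h^1$ for some $t\le-1$, under the constraints $0\le a_2\le 2$, $\vert b_1-b_2\vert\le 2$, and $\min(b_1,b_2)\le 0$. My plan is to translate everything into the explicit cohomology criterion of Proposition \ref{pLineBundle} and then carry out a finite case analysis. Writing $\beta_1:=t+b_1$ and $\beta_2:=t+b_2-a_2$, the relevant nonvanishing condition is $h^1\big(F,\cO_F(\beta_1 h_1+\beta_2 h_2)\big)\ne0$.

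**Applying the criterion.** The $h^1$ case in Proposition \ref{pLineBundle} requires (after ordering the exponents so the smaller one plays the role of $\alpha_1$) that the smaller exponent be $\le -2$ while the sum satisfies $\beta_1+\beta_2+1\ge 0$. Since $\beta_1+\beta_2=2t+b_1+b_2-a_2$, the sum condition reads $2t+b_1+b_2-a_2+1\ge0$, i.e. $t\ge \tfrac{a_2-b_1-b_2-1}{2}$. Together with $t\le -1$ this already pins $t$ into a short interval, because $b_1+b_2$ is bounded: using $\min(b_i)\le0$ and $\vert b_1-b_2\vert\le2$ one gets $b_1+b_2\le 2$, so the lower bound on $t$ is at least $\tfrac{a_2-3}{2}\ge-\tfrac32$, forcing $t=-1$. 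This is the key simplification: the sum inequality, combined with the hypotheses on $a_2,b_1,b_2$, leaves $t=-1$ as the only candidate.

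**Finishing at $t=-1$.** With $t=-1$ fixed I would substitute back and look for which triples $(a_2,b_1,b_2)$ make $h^1\big(F,\cO_F((b_1-1)h_1+(b_2-1-a_2)h_2)\big)\ne0$. The $h^1$ nonvanishing now demands simultaneously that the smaller of $b_1-1$ and $b_2-1-a_2$ be $\le -2$ and that $(b_1-1)+(b_2-1-a_2)+1=b_1+b_2-a_2-1\ge0$. The second inequality gives $b_1+b_2\ge a_2+1\ge1$, so $b_1+b_2$ is positive; combined with $\min(b_1,b_2)\le0$ and $\vert b_1-b_2\vert\le2$, the only possibilities are the pairs $\{b_1,b_2\}=\{0,2\}$ or $\{b_1,b_2\}=\{0,1\}$ (up to which is which). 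I would then test each pair against both inequalities for each admissible $a_2\in\{0,1,2\}$: the requirement that some exponent be $\le -2$ forces $b_2-1-a_2\le-2$ when $b_2=0$, i.e. $a_2\ge b_2-1+2=1$, and simultaneously $b_1-1\le-2$ is impossible once $b_1\ge0$, so the $\le-2$ exponent must be the $h_2$ one and must come from the larger $b$ being used as $b_1$. Tracking these constraints eliminates $\{0,1\}$ and all $a_2$ except the single solution $a_2=1$, $b_1=2$, $b_2=0$, for which $(b_1-1,b_2-1-a_2)=(1,-2)$ indeed satisfies both conditions of the $h^1$ region.

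**Main obstacle.** The only delicate point is bookkeeping: Proposition \ref{pLineBundle} is stated under the normalization $\alpha_1\le\alpha_2$, so I must be careful to reorder $\beta_1,\beta_2$ before applying it and to keep track of which of $b_1,b_2$ is the larger, since the lemma statement fixes the specific assignment $b_1=2$, $b_2=0$ rather than merely the unordered pair. I expect no genuine difficulty beyond verifying that the companion pair $\{0,1\}$ and the swapped assignment $b_1=0,b_2=2$ fail—the former because the sum is too small to reach the $\le-2$ threshold, the latter because it is excluded by the stated ordering convention and by checking that the would-be negative exponent lands on the wrong factor.
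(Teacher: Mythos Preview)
Your proof is correct and follows essentially the same route as the paper's: both arguments hinge on Proposition~\ref{pLineBundle}, using the sum inequality $2t+b_1+b_2-a_2+1\ge0$ together with the bound $b_1+b_2\le2$ to pin down $t=-1$, then a short case analysis on which exponent is $\le-2$. Your organization is slightly cleaner in that you isolate $t=-1$ once, whereas the paper derives it separately inside each of the two cases $b_1\lessgtr b_2-a_2$.

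One small correction: your remark that the swapped assignment $b_1=0$, $b_2=2$ is ``excluded by the stated ordering convention'' is not right---there is no such convention in the lemma, and nothing in the surrounding text fixes an order on $b_1,b_2$. That case must be (and is easily) ruled out by the direct check you also allude to: with $b_1=0$ the $h_1$-exponent is $-1$, and $b_2-1-a_2=1-a_2\ge-1$ for $a_2\le2$, so neither exponent reaches $-2$ and $h^1$ vanishes. Once you replace the convention remark with this computation, the argument is complete.
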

\begin{proof}
Notice that $b_1,b_2\le2$ thanks to the hypothesis.
One implication is trivial. Indeed if $a_2=1$, $b_1=2$, $b_2=0$ and $t=-1$, then $h^1\big(F,\cO_F((t+b_1)h_1+(t+b_2-a_2)h_2)\big)=1$ thanks to Proposition \ref{pLineBundle}. 

Let us now prove the opposite implication. Assume that $h^1\big(F,\cO_F((t+b_1)h_1+(t+b_2-a_2)h_2)\big)\ne0$ for some $t\le-1$. 

Let $b_1\le b_2-a_2$. by Proposition \ref{pLineBundle} above we have
$$
t+b_1\le-2,\qquad 2t+b_1+b_2-a_2+1\ge0.
$$
Hence
$$
0\le 2t+b_1+b_2-a_2+1=(t+b_1)+t+b_2-a_2+1\le b_2-a_2-2\le -a_2\le0,
$$
for $t\le-1$. Indeed we know that $\vert b_1-b_2\vert\le2$, thus $b_2-2\le b_1$. If $b_1\le0$, then $b_2-2\le0$. If $b_1>0$, then $b_2\le0$ thanks to the hypothesis. We conclude that all the inequalities above must be actually equalities, whence $t=-1$, $t+b_1=-2$, $b_2=2$, $a_2=0$, thus $b_1=-1$. Since $3=\vert b_1-b_2\vert> 2$, we conclude that such a case cannot occur.

Let $b_1> b_2-a_2$. As in the previous case we deduce that
\begin{gather*}
t+b_2-a_2\le-2,\qquad 2t+b_1+b_2-a_2+1\ge0,\\
0\le 2t+b_1+b_2-a_2+1=(t+b_2-a_2)+t+b_1+1\le b_1-2\le0
\end{gather*}
whence $t=-1$, $b_1=2$, $t+b_2-a_2=-2$. Taking into account of the restrictions on $a_2$ and $b_2$ we have either $b_2=-1$ and $a_2=0$, or $b_2=0$ and $a_2=1$. Again the case $b_2=-1$ and $a_2=0$ cannot occur because $3=\vert b_1-b_2\vert> 2$.
\end{proof}

From the above lemma we may assume that $a_2=1$, $b_1=2$, $b_2=0$ i.e.
$\cE=\cO_F(h_2)\oplus\cO_F(2h_1)$. Notice that $\cG^\vee(\eta)$ is indecomposable: it is also aCM by Serre's duality, because $\omega_\Phi\cong\cO_\Phi(-3\eta)$. Moreover, in this case
$$
\cG^\vee(\eta)\otimes\cO_F\cong\cE^\vee(h)\cong \cO_F(h_1)\oplus\cO_F(-h_1+h_2)
$$
is initialized, thus $\cG^\vee(\eta)$ is also initialized. Now we can substitute $\cG$ with $\cG^\vee(\eta)$: since $h^1\big(F, \cO_F(th-2h_1+h_2)\big)=0$ for each $t\le -1$, it follows that we can repeat almost verbatim (we only have to permute the roles of $\eta_1$ and $\eta_2$) the first part of the above discussion, proving that $\cG^\vee(\eta)$, hence $\cG$, must be decomposable.

We summarize the above discussion in the following statement.

\begin{theorem}
\label{tVectorBundle}
If $\cG$ is an initialized, indecomposable, aCM bundle of rank $2$ on $\Phi$, then $\cG\otimes\cO_F$ is an initialized, indecomposable, aCM bundle of rank $2$ on $F$.
\end{theorem}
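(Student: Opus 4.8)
The plan is to argue by contraposition: assuming that the restriction $\cE:=\cG\otimes\cO_F$ \emph{decomposes}, I would show that $\cG$ itself decomposes. By Remark \ref{rRestriction}, whenever $\cG$ is initialized and aCM the same holds for $\cE$, so the entire content of the statement is that indecomposability is inherited by the restriction, and the preceding discussion can be organized into exactly such a proof. The first move is to normalize the splitting: a direct summand of an aCM bundle is again aCM, so both line-bundle summands of $\cE$ are aCM, and by Corollary \ref{cLineBundle} (together with the symmetry $h_1\leftrightarrow h_2$) and the initialization hypothesis one may write
\[
\cE\cong\cO_F(a_2h_2)\oplus\cO_F(b_1h_1+b_2h_2),
\]
with $0\le a_2\le2$, $\vert b_1-b_2\vert\le2$ and $\min(b_1,b_2)\le0$.

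The core step, which I expect to carry the real weight, is to promote this splitting on $F$ to a global section of $\cG(-a_2\eta_2)$ whose zero locus on $\Phi$ is empty. Twisting the restriction sequence \eqref{seqRestriction} by $\cO_\Phi(-a_2\eta_2)$ produces \eqref{seqRestrictionTwisted}. Assuming the vanishing $h^1\big(F,\cO_F((t+b_1)h_1+(t+b_2-a_2)h_2)\big)=0$ for all $t\le-1$, a descending induction along the long exact cohomology sequence yields surjections $H^1\big(\Phi,\cG(t\eta-a_2\eta_2)\big)\twoheadrightarrow H^1\big(\Phi,\cG((t+1)\eta-a_2\eta_2)\big)$; since the source vanishes for $t\ll0$, it vanishes for every $t\le-1$, whence $h^0\big(\Phi,\cG(-a_2\eta_2)\big)\ne0$. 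The initialization of $\cG$, combined with the vanishing just obtained, makes the restriction map $\psi$ on global sections an isomorphism, so a general $\sigma\in H^0\big(\Phi,\cG(-a_2\eta_2)\big)$ restricts to a general section of $\cO_F\oplus\cO_F(b_1h_1+(b_2-a_2)h_2)$, whose $\cO_F$-component is nowhere vanishing; hence $(\sigma)_0\cap F=\emptyset$, forcing $(\sigma)_0=\emptyset$. Then Sequence \eqref{seqIdeal} for $\sigma$ collapses to $0\to\cO_\Phi\to\cG(-a_2\eta_2)\to\cO_\Phi(c_1(\cG)-2a_2\eta_2)\to0$, whose extension class lies in $\Ext^1_\Phi\big(\cO_\Phi(c_1(\cG)-2a_2\eta_2),\cO_\Phi\big)\cong H^1\big(\Phi,\cO_\Phi(2a_2\eta_2-c_1(\cG))\big)$, and this group vanishes by K\"unneth. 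The sequence therefore splits and $\cG\cong\cO_\Phi(a_2\eta_2)\oplus\cO_\Phi(c_1(\cG)-a_2\eta_2)$ is decomposable.

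The hard part, and the only genuine obstacle, is the single configuration in which the $H^1$-vanishing on $F$ used above breaks down. By Lemma \ref{lVanishing} this occurs exactly for $a_2=1$, $b_1=2$, $b_2=0$, i.e. $\cE\cong\cO_F(h_2)\oplus\cO_F(2h_1)$, where the descending induction is obstructed at $t=-1$. To restore uniformity I would replace $\cG$ by $\cG^\vee(\eta)$: this bundle is again indecomposable, it is aCM by Serre duality (using $\omega_\Phi\cong\cO_\Phi(-3\eta)$), and it is initialized because its restriction $\cE^\vee(h)\cong\cO_F(h_1)\oplus\cO_F(-h_1+h_2)$ is. For $\cG^\vee(\eta)$ the relevant line bundles are $\cO_F(th-2h_1+h_2)$, which now have vanishing $h^1$ for all $t\le-1$ by Proposition \ref{pLineBundle}, so the argument of the previous paragraph applies verbatim after exchanging $\eta_1\leftrightarrow\eta_2$, splitting $\cG^\vee(\eta)$ and hence $\cG$. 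Combining the two cases, a splitting of $\cE$ always forces a splitting of $\cG$, which is the desired contrapositive; once the exceptional type is routed through the dual twist, everything else is cohomological bookkeeping driven by Proposition \ref{pLineBundle} and K\"unneth vanishing.
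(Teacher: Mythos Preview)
Your proposal is correct and follows exactly the paper's approach: the paper's proof of Theorem \ref{tVectorBundle} is the discussion immediately preceding the statement, and you have reproduced it step for step---the normalization of the splitting via Corollary \ref{cLineBundle}, the descending induction on $H^1$ via the twisted restriction sequence \eqref{seqRestrictionTwisted}, the K\"unneth splitting of the Koszul sequence on $\Phi$, and the passage to $\cG^\vee(\eta)$ in the single exceptional case isolated by Lemma \ref{lVanishing}. There is nothing to add.
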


Taking into account the above result, in what follows we will first deal with initialized, indecomposable, aCM bundles of rank $2$ on $F$, giving their complete classification. Then we will use the results obtained on $F$ in order to classify also initialized, indecomposable, aCM bundles of rank $2$ on $\Phi$ and viceversa.

\section{Theorem A: the lower bound}
\label{sLowerChern6}
In this section we will find a bound from below for the first Chern
class $c_1:=\alpha_1h_1+\alpha_2h_2$ of an indecomposable, initialized, aCM bundle $\cE$ of rank
$2$ on $F$. Due to Theorem \ref{tVectorBundle}, such a bound also holds for the first Chern class of each initialized indecomposable aCM bundle of rank $2$ on $\Phi$.

\begin{lemma}
  \label{lGG6}
  Let $\cE$ be an initialized aCM bundle of rank $2$ on $F$. Then $\cE^\vee(2h)$ is aCM and globally generated.
\end{lemma}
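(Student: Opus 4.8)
The plan is to prove the two assertions separately, using throughout that $F$ is arithmetically Gorenstein with dualizing sheaf $\omega_F\cong\cO_F(-2h)$ (which follows from adjunction applied to the hyperplane section $F\subseteq\Phi$ together with $\omega_\Phi\cong\cO_\Phi(-3\eta)$) and that $\cO_F(h)=\cO_{\p7}(1)\otimes\cO_F$ is very ample and globally generated. For the aCM property I would argue that both dualizing and twisting by a power of $\cO_F(h)$ preserve it. Indeed, Serre duality on the threefold $F$ gives, for every vector bundle $\cF$ and every $t\in\bZ$,
$$
H^i\big(F,\cF^\vee(th)\big)\cong H^{3-i}\big(F,\cF((-t-2)h)\big)^\vee,
$$
so that $H^1_*\big(F,\cF^\vee\big)$ is dual to $H^2_*\big(F,\cF\big)$ and $H^2_*\big(F,\cF^\vee\big)$ to $H^1_*\big(F,\cF\big)$. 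Taking $\cF=\cE$, the aCM hypothesis on $\cE$ shows that $\cE^\vee$ is aCM; twisting by the multiple $\cO_F(2h)$ of the hyperplane class merely shifts the grading of the modules $H^i_*$, hence $\cE^\vee(2h)$ is aCM.

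For global generation I would show that $\cG:=\cE^\vee(2h)$ is $0$-regular with respect to $\cO_F(h)$, i.e. that $H^i\big(F,\cG(-ih)\big)=0$ for $i=1,2,3$, and then invoke Mumford's lemma that a $0$-regular sheaf is globally generated. The cases $i=1,2$ read $H^1\big(F,\cE^\vee(h)\big)=0$ and $H^2\big(F,\cE^\vee\big)=0$, both already contained in the vanishing $H^1_*\big(F,\cE^\vee\big)=H^2_*\big(F,\cE^\vee\big)=0$ obtained above. The case $i=3$ is the only one that uses initialization: by Serre duality $H^3\big(F,\cE^\vee(-h)\big)\cong H^0\big(F,\cE(-h)\big)^\vee$, which vanishes precisely because $h^0\big(F,\cE(-h)\big)=0$.

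The computations are short, so the points requiring attention are conceptual rather than technical. The first is to fix the correct dualizing sheaf $\omega_F\cong\cO_F(-2h)$, which is exactly what makes both the aCM reduction and the $i=3$ vanishing fall out cleanly. The second, which I regard as the main (if mild) obstacle, is to apply Mumford's criterion legitimately on $F$ rather than on $\p7$: since $\dim F=3$, pushing $\cG$ forward along the inclusion $\iota\colon F\hookrightarrow\p7$ turns the three displayed vanishings into the statement that $\iota_*\cG$ is $0$-regular on $\p7$, and global generation of $\iota_*\cG$ over $\cO_{\p7}$ restricts to global generation of $\cG$ over $\cO_F$. Beyond bookkeeping of twists I expect no real difficulty.
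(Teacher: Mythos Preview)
Your proof is correct and follows essentially the same approach as the paper: both use Serre duality with $\omega_F\cong\cO_F(-2h)$ to get the aCM property of $\cE^\vee$, and both verify the three Castelnuovo--Mumford vanishings $h^i\big(F,\cE^\vee((2-i)h)\big)=0$ for $i=1,2,3$ (the first two from the aCM condition, the third from initialization via Serre duality) to conclude global generation. Your extra remark about pushing forward to $\p7$ to justify Mumford's criterion is a harmless elaboration of the same argument.
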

\begin{proof}
Recall that $\omega_F\cong\cO_F(-2h)$, thus Serre's duality implies that $\cE^\vee$ is aCM. Moreover, we have $h^i\big(F,\cE^\vee((2-i)h)\big)=h^{3-i}\big(F,\cE((i-4)h)\big)=0$, $i=1,2,3$, thus $\cE$ is $2$--regular in the sense of Castelnuovo--Mumford (see \cite{Mu}), hence the second assertion follows.
\end{proof}

It follows that $4h-c_1=c_1(\cE^\vee(2h))$ is effective on $F$, whence $\alpha_i\le4$. Moreover if $\alpha_1,\alpha_2\ge3$, then there would exists an injective morphism $H^0\big(F,\cE(2h-c_1)\big)\to H^0\big(F,\cE(-h)\big)$. On the one hand we know that the target space is zero. On the other hand $H^0\big(F,\cE(2h-c_1)\big)\ne0$ since $\cE(2h-c_1)\cong{\cE}^\vee(2h)$ is globally generated, a contradiction. Thus we can always assume $\alpha_1\le2$.

We first check that the zero--locus of each section of an indecomposable, initialized, aCM bundle of rank $2$ on $F$ is non--empty.

\begin{lemma}
\label{lNonEmpty6}
Let $\cE$ be an indecomposable, initialized, aCM bundle of rank $2$ on $F$. Then the zero locus $(s)_0$ of a section of $\cE$ is non--empty.
\end{lemma}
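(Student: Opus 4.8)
The plan is to argue by contradiction: I will suppose that some nonzero section $s\in H^0\big(F,\cE\big)$ has empty zero locus and show that this forces $\cE$ to split, contradicting indecomposability. If $(s)_0=\emptyset$ then $s$ is a nowhere-vanishing section, so in the notation of Sequence \eqref{seqIdeal} we have $C=\emptyset$ and $\Delta=0$, and the section $s$ fits into the short exact sequence
\begin{equation*}
0\longrightarrow \cO_F\stackrel{s}{\longrightarrow}\cE\longrightarrow \cO_F(c_1)\longrightarrow 0,
\end{equation*}
where $c_1=\alpha_1h_1+\alpha_2h_2$. This extension is classified by an element of $\Ext^1_F\big(\cO_F(c_1),\cO_F\big)\cong H^1\big(F,\cO_F(-c_1)\big)$, and the whole point will be to prove that this group vanishes.

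The first step is to extract a cohomological consequence of the aCM hypotheses. Twisting the displayed sequence by $\cO_F(th)$ and passing to cohomology, I use that $F$ is an aCM variety, so $h^1\big(F,\cO_F(th)\big)=h^2\big(F,\cO_F(th)\big)=0$ for every $t$, together with the aCM vanishing $H^1_*\big(F,\cE\big)=0$ for $\cE$. The portion
\begin{equation*}
H^1\big(F,\cO_F(th)\big)\longrightarrow H^1\big(F,\cE(th)\big)\longrightarrow H^1\big(F,\cO_F(c_1+th)\big)\longrightarrow H^2\big(F,\cO_F(th)\big)
\end{equation*}
then yields $H^1\big(F,\cO_F(c_1+th)\big)\cong H^1\big(F,\cE(th)\big)=0$ for all $t$, i.e. $H^1_*\big(F,\cO_F(c_1)\big)=0$. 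Next I invoke Corollary \ref{cNonVanishing}, which gives the equivalence $H^1_*\big(F,\cO_F(c_1)\big)\ne0\Leftrightarrow H^2_*\big(F,\cO_F(c_1)\big)\ne0$; hence also $H^2_*\big(F,\cO_F(c_1)\big)=0$. In particular the $t=-2$ summand vanishes, so $h^2\big(F,\cO_F(c_1-2h)\big)=0$.

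Finally I identify the extension group with this vanishing cohomology via Serre duality. Since $\dim F=3$ and $\omega_F\cong\cO_F(-2h)$, one has
\begin{equation*}
H^1\big(F,\cO_F(-c_1)\big)\cong H^2\big(F,\cO_F(c_1)\otimes\omega_F\big)^\vee=H^2\big(F,\cO_F(c_1-2h)\big)^\vee=0.
\end{equation*}
Therefore the extension splits, giving $\cE\cong\cO_F\oplus\cO_F(c_1)$, which contradicts the indecomposability of $\cE$. As $\cE$ is initialized, $h^0\big(F,\cE\big)\ne0$, so a nonzero section exists, and the above shows its zero locus cannot be empty.

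I expect the only delicate point to be bookkeeping the twist: the extension lives in $H^1\big(F,\cO_F(-c_1)\big)$, which a priori concerns $-c_1$ rather than $c_1$, whereas the aCM argument directly controls only $H^1_*\big(F,\cO_F(c_1)\big)$. Bridging this gap is precisely what the combination of Corollary \ref{cNonVanishing} (passing from $H^1_*$ to $H^2_*$ of $\cO_F(c_1)$) and Serre duality (converting $H^1\big(F,\cO_F(-c_1)\big)$ into a summand of $H^2_*\big(F,\cO_F(c_1)\big)$) accomplishes; once this identification is in place, the vanishing and the splitting are immediate, and no case analysis on the values of $\alpha_1,\alpha_2$ is needed.
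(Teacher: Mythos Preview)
Your proof is correct, and it is a cleaner variant of the paper's argument. Both approaches start from the same observation: if $(s)_0=\emptyset$ then $\cE$ sits in $0\to\cO_F\to\cE\to\cO_F(c_1)\to0$, and one must show this extension splits. The paper argues by assuming the extension class in $H^1\big(F,\cO_F(-c_1)\big)$ is nonzero, reads off from Proposition~\ref{pLineBundle} the numerical constraints $\alpha_2\ge2$, $\alpha_1+\alpha_2\le1$ (so $\alpha_1\le-1$), and then exhibits a specific twist $t=-2-\alpha_1$ for which $h^1\big(F,\cE(th)\big)=h^1\big(F,\cO_F(-2h_1+(\alpha_2-\alpha_1-2)h_2)\big)\ge1$, contradicting the aCM hypothesis on $\cE$. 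You instead run the aCM vanishing for $\cE$ through the sequence first to kill $H^1_*\big(F,\cO_F(c_1)\big)$, then use Corollary~\ref{cNonVanishing} and Serre duality to convert this into $H^1\big(F,\cO_F(-c_1)\big)=0$ directly, bypassing any case analysis on $(\alpha_1,\alpha_2)$. The gain is conceptual economy: your route packages the same information from Figure~1 (the $h^1$/$h^2$ symmetry along lines parallel to $x_1=x_2$) without unwinding it numerically, and it makes transparent that the only ingredients needed are the aCM property of $\cE$, the aCM property of $F$, and the Borel--Weil--Bott symmetry encoded in Corollary~\ref{cNonVanishing}.
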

\begin{proof}
We assume  $\alpha_1\le\alpha_2$. If $(s)_0=\emptyset$, then sequence \eqref{seqIdeal} becomes
$$
0\longrightarrow \cO_F\longrightarrow \cE\longrightarrow \cO_F(c_1)\longrightarrow 0.
$$
The class of such an extension should be a non--zero element in $H^1\big(F,\cO_F(-c_1)\big)$. Thanks to Proposition \ref{pLineBundle}, we deduce that $\alpha_2\ge2$ and $\alpha_1+\alpha_2\le1$, whence $\alpha_1\le-1$. The cohomology of the above sequence twisted by $\cO_F((-2-\alpha_1) h)$ and Proposition \ref{pLineBundle} yields
$$
h^1\big(F,\cE((-2-\alpha_1)h)\big)=h^1\big(F,\cO(-2h_1+(-2+\alpha_2-\alpha_1)h_2)\big)\ge1.
$$
We conclude that $\cE$ would not be aCM, a contradiction.
\end{proof}

We know that for each non--zero $s\in H^0\big(F,\cE\big)$ we have $(s)_0=E\cup D$ where $E$ has pure codimension $2$ (or it is empty) and $D\in\vert\delta_1h_1+\delta_2h_2\vert$ has pure codimension $1$ (or it is empty). Moreover at least one of them is non--empty thanks to Lemma \ref{lNonEmpty6}. Sequence \eqref{seqIdeal} for $s$ thus becomes
$$
  0\longrightarrow \cO_F(D)\longrightarrow \cE\longrightarrow \cI_{E\vert F}(c_1-D)\longrightarrow 0.
$$

On the one hand, twisting Sequence \eqref{seqIdeal} for $s$ by $\cO_F(-h)$ and
taking its cohomology, the vanishing of $h^0\big(F,\cE(-h)\big)$
implies $h^0\big(F,\cO_F(D-h)\big)=0$. In particular we know that at
least one of the $\delta_i$ is zero.

On the other hand, twisting the same sequence by $\cO_F(2h-c_1)$, using the isomorphism $\cE(-c_1)\cong{\cE}^\vee$ and taking into account that ${\cE}^\vee(2h)$ is globally generated (see Lemma \ref{lGG6}), we obtain that $\cI_{E\vert F}(2h-D)$ is globally generated too. Thus
$$
0\ne H^0\big(F, \cI_{E\vert F}(2h-D)\big)\subseteq H^0\big(F, \cO_F(2h-D)\big),
$$
hence $\delta_i\le2$, $i=1,2$. We conclude that $(\delta_1,\delta_2)\in\left\{\ (0,0), (0,1), (0,2), (1,0), (2,0)\ \right\}$: in particular $\cO_F(D)$ is aCM (see Corollary \ref{cLineBundle}).

We will now prove that $c_1-D$ is effective: hence the same holds for $c_1$. We start with the following lemma.

\begin{lemma}
  Let $\cE$ be an indecomposable, initialized, aCM bundle of rank $2$ on $F$. Assume that $s\in H^0\big(F,\cE\big)$ is such that $(s)_0=E\cup D$ where $E$ has codimension $2$ (or it is empty) and $D$ has codimension $1$. If $c_1-D$ is not effective, then $E=\emptyset$.
  \end{lemma}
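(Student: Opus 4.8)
The plan is to argue by contraposition in cohomological form: assuming $c_1-D$ is not effective I will first force the vanishing $H^0\big(E,\cO_E(c_1-D)\big)=0$, and then show that such a vanishing is impossible as soon as $E\neq\emptyset$.

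First I would record that ``$c_1-D$ is not effective'' means exactly $H^0\big(F,\cO_F(c_1-D)\big)=0$, since by Proposition \ref{pLineBundle} one of the two coordinates of $c_1-D$ is then negative. Next, from Sequence \eqref{seqIdeal} for $s$,
$$
0\lra\cO_F(D)\lra\cE\lra\cI_{E\vert F}(c_1-D)\lra0,
$$
together with the aCM vanishings $H^1\big(F,\cE\big)=0$ (as $\cE$ is aCM) and $H^2\big(F,\cO_F(D)\big)=0$ (as $\cO_F(D)$ is aCM, established above), the portion $H^1(\cE)\to H^1\big(\cI_{E\vert F}(c_1-D)\big)\to H^2(\cO_F(D))$ of the long exact sequence reads $0\to H^1\big(\cI_{E\vert F}(c_1-D)\big)\to0$, whence $H^1\big(F,\cI_{E\vert F}(c_1-D)\big)=0$.

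Then I would feed both facts into Sequence \eqref{seqStandard} twisted by $\cO_F(c_1-D)$,
$$
0\lra\cI_{E\vert F}(c_1-D)\lra\cO_F(c_1-D)\lra\cO_E(c_1-D)\lra0,
$$
whose cohomology gives $H^0\big(\cO_F(c_1-D)\big)\to H^0\big(\cO_E(c_1-D)\big)\to H^1\big(\cI_{E\vert F}(c_1-D)\big)$, i.e. $0\to H^0\big(\cO_E(c_1-D)\big)\to0$, so that $H^0\big(E,\cO_E(c_1-D)\big)=0$. Now I would bring in the geometry of $E$. Since $E$ is the zero locus of the section $s_D$ of $\cE(-D)$, it is a local complete intersection of pure codimension $2$, hence Gorenstein of dimension $1$, with $\det(\cN_{E\vert F})\cong\cO_E(c_1-2D)$; adjunction then gives $\omega_E\cong\cO_E(c_1-2D-2h)$, so that $\cO_E(c_1-D)\cong\omega_E\otimes\cO_E(2h+D)$. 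By Serre duality on $E$ the vanishing above becomes $H^1\big(E,\cO_E(-2h-D)\big)=0$.

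The hard part will be the final positivity contradiction: I must show that for $E\neq\emptyset$ the line bundle $\cO_E(-2h-D)$ cannot be acyclic. As $-2h-D$ has negative degree on every one–dimensional component ($h$ being ample and $D$ effective), one gets $H^0\big(E,\cO_E(-2h-D)\big)=0$, hence
$$
h^1\big(E,\cO_E(-2h-D)\big)=-\chi\big(\cO_E(-2h-D)\big)=\deg_E(2h+D)+p_a(E)-1.
$$
Working one connected component $E_i$ at a time, where $\deg_{E_i}(2h)\ge2$ and $p_a(E_i)\ge0$, each contribution is strictly positive, so $H^1\big(E,\cO_E(-2h-D)\big)\neq0$, contradicting the vanishing just obtained; therefore $E=\emptyset$. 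The delicate point is exactly the estimate $\deg_E(2h+D)+p_a(E)>1$: one must exclude degenerate, highly non–reduced components of very negative arithmetic genus. I expect this to be controlled by the a priori bounds already in force, namely $\delta_i\le 2$ and $\alpha_i\le4$ coming from Lemma \ref{lGG6} and the ensuing discussion, which cap the degree $\deg_E(h)$ and leave only finitely many numerical types of $E$ to inspect.
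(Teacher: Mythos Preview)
Your argument is correct and attractive up to the point where you obtain $H^0\big(E,\cO_E(c_1-D)\big)=0$, hence by Serre duality $H^1\big(E,\cO_E(-2h-D)\big)=0$. The gap is precisely where you locate it: the inequality $\deg_E(2h+D)+p_a(E)>1$ is \emph{not} available. Your proposed remedy does not work. The bounds $\delta_i\le2$ and $\alpha_i\le4$ constrain $c_1$ and $D$, but say nothing about $c_2$; formulas \eqref{c_1c_2} and \eqref{hc_2} are derived \emph{after} this lemma (they rely on Proposition~\ref{pLower6}, which in turn uses the present lemma). So neither $\deg(E)=h\cdot c_2(\cE(-D))$ nor $p_a(E)$ is bounded a priori, and a non--reduced $E$ can have $p_a(E)$ arbitrarily negative. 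Your claim ``$p_a(E_i)\ge0$ for each connected component'' is simply false for non--reduced curves. Likewise, the auxiliary vanishing $H^0\big(E,\cO_E(-2h-D)\big)=0$ is not immediate for non--reduced $E$.

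There is, however, a clean completion of your line of attack that bypasses $p_a(E)$ entirely. Your two inputs hold not only for the single twist $c_1-D$ but for every $t\le0$: one has $H^0\big(F,\cO_F(c_1-D+th)\big)=0$ (one coordinate stays negative) and $H^1\big(F,\cI_{E\vert F}(c_1-D+th)\big)=0$ (since $H^1\big(F,\cE(th)\big)=H^2\big(F,\cO_F(D+th)\big)=0$ for all $t$, as $\cE$ and $\cO_F(D)$ are aCM). Hence $H^0\big(E,\cO_E(c_1-D+th)\big)=0$ for all $t\le0$, and Serre duality gives $H^1\big(E,\cO_E(-D-mh)\big)=0$ for every $m\ge2$. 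Then $\chi\big(\cO_E(-D-mh)\big)=h^0\ge0$ for all $m\ge2$, while Riemann--Roch yields $\chi\big(\cO_E(-D-mh)\big)=-m\deg(E)-\deg_E(D)+\chi(\cO_E)\to-\infty$ if $\deg(E)>0$. This forces $E=\emptyset$.

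For comparison, the paper's proof is quite different: it passes to a general hyperplane section $Z=E\cap H\subseteq S=F\cap H$, extracts numerical constraints on $h^i\big(S,\cO_S(c_1-D-th)\big)$ from the vanishings of $h^i\big(F,\cI_{E\vert F}(c_1-D-th)\big)$, and then plugs in the explicit formula of Proposition~\ref{pLineBundle} to reach the contradiction $1\ge3(\epsilon_1+\epsilon_2)\ge6$. Your route, once patched as above, is shorter and avoids the surface computation altogether; the paper's route has the virtue of staying entirely on $F$ and never invoking duality on the possibly singular curve $E$.
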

  \begin{proof}
Assume $c_1-D$ is non--effective: thanks to Sequence \eqref{seqStandard} for $s$,
\begin{equation}
\label{NUM0}
h^0\big(F,\cI_{E\vert F}(c_1-D-th)\big)\le h^0\big(F,\cO_{F}(c_1-D-th)\big)=0
\end{equation}
for each positive integer $t$.  Taking into account that $\cE$ and $\cO_F(D)$ are aCM and $\delta_i\ge0$, the cohomology of Sequence \eqref{seqIdeal} for $s$ yields also the vanishings
\begin{equation}
\label{NUM1}
\begin{gathered}
  h^1(F,\cI_{E\vert F}(c_1-D-th))=0,\qquad t\in\ \bZ\\
  h^2(F,\cI_{E\vert F}(c_1-D))=h^3\big(F,\cO_F(D)\big)=0\\
  h^2(F,\cI_{E\vert F}(c_1-D-h))=h^3\big(F,\cO_F(D-h)\big)=0.
  \end{gathered}
\end{equation}
Finally, we also have
\begin{equation}
\label{NUM2}
h^2\big(F,\cI_{E\vert F}(c_1-D-2h)\big)\le h^3\big(F,\cO_F(D-2h)\big)=h^0\big(F,\cO_F(-D)\big)\le1.
\end{equation}

 Assume $E\ne\emptyset$, whence $\deg(E)\ge1$. Let $H$ be a general hyperplane in $\p7$. Define $S:=F\cap H$ and $Z:=E\cap H$, so that $\dim(Z)=0$. We have the following exact sequence
  \begin{equation*}
    0\longrightarrow \cI_{E\vert F}(c_1-D-h)\longrightarrow \cI_{E\vert F}(c_1-D)\longrightarrow \cI_{Z\vert S}(c_1-D)\longrightarrow 0.
  \end{equation*}
  The cohomology of the above sequence, Equalities \eqref{NUM0}, \eqref{NUM1} and Inequality \eqref{NUM2} imply
\begin{gather*}
h^0\big(S,\cI_{Z\vert S}(c_1-D)\big)=h^1\big(S,\cI_{Z\vert S}(c_1-D)\big)=0,\\
h^0\big(S,\cI_{Z\vert S}(c_1-D-h)\big)=0,\qquad
h^1\big(S,\cI_{Z\vert S}(c_1-D-h)\big)\le1.
\end{gather*}

  The above relations and the cohomology of Sequence 
  \begin{equation}
  \label{seqSectionScheme}
  0\longrightarrow \cI_{Z\vert S}(c_1-D)\longrightarrow \cO_S(c_1-D)\longrightarrow \cO_Z\longrightarrow 0
 \end{equation}
give
\begin{equation}
\label{NUM3}
h^1\big(S,\cO_{S}(c_1-D)\big)=0,
\end{equation}
\begin{equation}
\label{NUM6}
h^0\big(S,\cO_{S}(c_1-D)\big)=h^0\big(Z,\cO_Z\big)=\deg(Z)=\deg(E).
\end{equation}
The same relations as above and the cohomology of Sequence \eqref{seqSectionScheme} twisted by $\cO_S(-h)$ also yield
$$
h^0\big(Z,\cO_Z\big)\le h^0\big(S,\cO_{S}(c_1-D-h)\big)-h^1\big(S,\cO_{S}(c_1-D-h)\big)+1.
$$

Looking at the formulas for $h^0\big(Z,\cO_Z\big)$ obtained above, we deduce the inequality
\begin{equation}
\label{NUM5}
1\ge h^0\big(S,\cO_{S}(c_1-D)\big)-h^0\big(S,\cO_{S}(c_1-D-h)\big)+h^1\big(S,\cO_{S}(c_1-D-h)\big).
\end{equation}
The remaining part of the proof is devoted to show that the aforementioned inequality leads to a contradiction.

Set $c_1-D:=\epsilon_1h_1+\epsilon_2h_2$ and assume $\epsilon_1\le\epsilon_2$. Obviously $\epsilon_1\le -1$ because $c_1-D$ is assumed to be not effective. Thus the cohomology of sequence
\begin{equation}
\label{seqSectionSurface}
0\longrightarrow\cO_F(c_1-D-h)\longrightarrow\cO_F(c_1-D)\longrightarrow\cO_S(c_1-D)\longrightarrow0
\end{equation}
and Equality \eqref{NUM6} yield
$$
1\le \deg(E)=h^0\big(S,\cO_{S}(c_1-D)\big)\le h^1\big(F,\cO_{F}(c_1-D-h)\big).
$$
Hence Proposition \ref{pLineBundle} implies
$$
h^1\big(F,\cO_{F}(c_1-D-h)\big)=-\frac{\epsilon_1\epsilon_2(\epsilon_1+\epsilon_2)}{2},
$$
$\epsilon_1\le-1$ and $\epsilon_1+\epsilon_2\ge1$, whence $\epsilon_2\ge 2$. Since $\cE$, hence $\cE^\vee$, is aCM and $h^0(F,\cI_{E\vert F}(-D))=0$, it follows from the cohomology of sequence
$$
0\longrightarrow \cO_F(D-c_1)\longrightarrow \cE^\vee\longrightarrow \cI_{E\vert F}(-D)\longrightarrow 0
$$
that $h^1\big(F,\cO_F(-\epsilon_1h_1-\epsilon_2h_2)\big)=h^1\big(F,\cO_F(D-c_1)\big)=0$. Taking into account that $-\epsilon_2\le-\epsilon_1$ and $\epsilon_2\ge 2$ whence $-\epsilon_2\le -2$, again by Proposition \ref{pLineBundle} we obtain
\begin{equation}
\label{LowerBound}
\epsilon_1+\epsilon_2\ge2.
\end{equation}
Recall that $\epsilon_1-2\le-3$: if $\epsilon_1+\epsilon_2\ge3$, then $(\epsilon_1-2)+(\epsilon_2-2)+1\ge0$, hence again Proposition \ref{pLineBundle} yields
$$
h^1\big(F,\cO_{F}(c_1-D-2h)\big)=-\frac{(\epsilon_1-1)(\epsilon_2-1)(\epsilon_1+\epsilon_2-2)}{2}.
$$
The same equality still holds when $\epsilon_1+\epsilon_2=2$, because both the members of the equality above are zero. Finally, Proposition \ref{pLineBundle} yields
$$
h^1\big(F,\cO_{F}(c_1-D)\big)=-\frac{(\epsilon_1+1)(\epsilon_2+1)(\epsilon_1+\epsilon_2+2)}{2}
$$
if $\epsilon_1\le-2$. Again the same equality holds also for $\epsilon_1=-1$, due to the vanishing of both the members.

Notice that if $\epsilon_1\le-1$ and $\epsilon_1+\epsilon_2\ge2$, then $h^2\big(F,\cO_{F}(c_1-D-2h)\big)=0$ (Proposition \ref{pLineBundle}). Thus Equality \eqref{NUM3} and  the cohomology of sequence \eqref{seqSectionSurface} yield
\begin{gather*}
h^0\big(S,\cO_{S}(c_1-D)\big)=h^1\big(F,\cO_{F}(c_1-D-h)\big)-h^1\big(F,\cO_{F}(c_1-D)\big),\\
{\begin{align*}
h^0\big(S,\cO_{S}(c_1-D-h)\big)&-h^1\big(S,\cO_{S}(c_1-D-h)\big)=\\
&=h^1\big(F,\cO_{F}(c_1-D-2h)\big)-h^1\big(F,\cO_{F}(c_1-D-h)\big).
\end{align*}}
\end{gather*}
In particular, by substituting the above expressions in Inequality \eqref{NUM5}, we obtain
$$
1\ge 2h^1\big(F,\cO_{F}(c_1-D-h)\big)-h^1\big(F,\cO_{F}(c_1-D)\big)-h^1\big(F,\cO_{F}(c_1-D-2h)\big).
$$
Thus the expressions obtained above in terms of the $\epsilon_i$'s for $h^1\big(F,\cO_{F}(c_1-D-th)\big)$, $t=0,1,2$, and  the Inequality \eqref{LowerBound} finally yield $1\ge 3(\epsilon_1+\epsilon_2)\ge6$, a contradiction. We conclude that if $c_1-D$ is non--effective, then $E=\emptyset$.
\end{proof}

We conclude the section by  proving the claimed effectiveness of $c_1-D$.

\begin{proposition}
\label{pLower6}
If $\cE$ is an indecomposable, initialized, aCM bundle of rank $2$ on $F$, $s\in H^0\big(F,\cE\big)$ and $D$ denotes the component of pure codimension $1$ of $(s)_0$, if any, then $c_1\ge c_1-D\ge0$.

In particular, if $c_1=0$, then the zero--locus of each section $s\in H^0\big(F,\cE\big)$ has pure codimension $2$.
\end{proposition}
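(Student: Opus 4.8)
The plan is to establish the effectiveness of $c_1-D$; the inequality $c_1\ge c_1-D$ is then automatic, since $D$ is by construction an effective divisor, so that $D\ge0$. I would argue by contradiction, assuming $c_1-D$ is not effective. By the preceding lemma this forces $E=\emptyset$, so that $(s)_0=D$ has pure codimension $1$ and Sequence \eqref{seqIdeal} for $s$ collapses to
\[
0\longrightarrow \cO_F(D)\longrightarrow \cE\longrightarrow \cO_F(c_1-D)\longrightarrow 0.
\]
Since $\cE$ is indecomposable this extension cannot split, so its class is a non--zero element of $\Ext^1_F\big(\cO_F(c_1-D),\cO_F(D)\big)\cong H^1\big(F,\cO_F(2D-c_1)\big)$. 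The contradiction will come from proving that this last group vanishes.

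The key intermediate step is to show that $\cO_F(c_1-D)$ is itself aCM. Twisting the displayed sequence by $\cO_F(th)$ and passing to cohomology, the terms $H^1\big(F,\cE(th)\big)$ and $H^2\big(F,\cO_F(D+th)\big)$ vanish for every $t$, because $\cE$ is aCM and $\cO_F(D)$ is aCM (recall that $(\delta_1,\delta_2)$ lies in the list computed above, so Corollary \ref{cLineBundle} applies). Hence $H^1\big(F,\cO_F(c_1-D+th)\big)=0$ for all $t$, i.e. $H^1_*\big(F,\cO_F(c_1-D)\big)=0$; Corollary \ref{cNonVanishing} then gives $H^2_*\big(F,\cO_F(c_1-D)\big)=0$ as well, so $\cO_F(c_1-D)$ is aCM. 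Writing $c_1-D=\epsilon_1h_1+\epsilon_2h_2$, Proposition \ref{pLineBundle} (equivalently the criterion in Corollary \ref{cLineBundle}) shows that this forces $\vert\epsilon_1-\epsilon_2\vert\le2$.

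What remains is a short numerical check, which I expect to be routine once the bound $\vert\epsilon_1-\epsilon_2\vert\le2$ is available. By the symmetry $h_1\leftrightarrow h_2$ I may assume $D=\delta_1h_1$, with $\delta_1\ge1$ since $D\ne\emptyset$ (otherwise $(s)_0=\emptyset$, against Lemma \ref{lNonEmpty6}). Then $2D-c_1=(\delta_1-\epsilon_1)h_1-\epsilon_2h_2$. Combining the non--effectiveness of $c_1-D$ (so $\min\{\epsilon_1,\epsilon_2\}\le-1$) with $\vert\epsilon_1-\epsilon_2\vert\le2$ and $\delta_1\ge1$, one checks in the two cases $\epsilon_1\le\epsilon_2$ and $\epsilon_2<\epsilon_1$ that both coefficients of $2D-c_1$ are at least $-1$; since a non--zero $H^1$ would require (by Proposition \ref{pLineBundle}) the smaller coefficient to be at most $-2$, this yields $H^1\big(F,\cO_F(2D-c_1)\big)=0$, the desired contradiction. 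Therefore $c_1-D\ge0$.

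Finally, for the last assertion, if $c_1=0$ then $D$ and $c_1-D$ are both effective and sum to $0$, which forces $D=0$; hence $(s)_0=E$ has pure codimension $2$ (and is non--empty by Lemma \ref{lNonEmpty6}). The main obstacle is the middle step: recognizing that the vanishing $H^1_*=0$ can be upgraded, via Corollary \ref{cNonVanishing}, to the full aCM property of the quotient line bundle, which is exactly what pins down $\vert\epsilon_1-\epsilon_2\vert\le2$ and makes the final count close; the genuinely hard reduction to the case $E=\emptyset$ is already supplied by the preceding lemma.
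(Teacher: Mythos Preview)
Your proof is correct and rests on the same two ingredients as the paper's: the reduction $E=\emptyset$ from the preceding lemma, and the vanishing $H^1_*\big(F,\cO_F(c_1-D)\big)=0$ obtained from the short exact sequence together with the fact that $\cE$ and $\cO_F(D)$ are aCM. The only difference is the direction in which you close the contradiction. The paper starts from the \emph{non}-splitting, i.e.\ $H^1\big(F,\cO_F(2D-c_1)\big)\ne0$, reads off (with $\epsilon_1\le-1$) that $\epsilon_2\ge2$, hence $\epsilon_2-\epsilon_1\ge3$, and then exhibits a specific twist $t=-2-\epsilon_1$ for which $h^1\big(F,\cO_F(c_1-D+th)\big)\ne0$, contradicting the vanishing above. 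You instead use Corollary~\ref{cNonVanishing} to upgrade $H^1_*=0$ to ``$\cO_F(c_1-D)$ is aCM'', extract $|\epsilon_1-\epsilon_2|\le2$, and then check numerically that both coefficients of $2D-c_1$ are $\ge-1$, so the extension must split. These are contrapositive to each other; your route is marginally cleaner because invoking Corollary~\ref{cNonVanishing} packages the constraint $|\epsilon_1-\epsilon_2|\le2$ once and for all, whereas the paper produces the contradicting $h^1\ne0$ by hand. (A small remark: your appeal to Corollary~\ref{cNonVanishing} is not strictly needed, since $H^1_*=0$ alone already forces $|\epsilon_1-\epsilon_2|\le2$ by the same line-crossing argument used in the proof of Corollary~\ref{cLineBundle}.)
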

\begin{proof}
Let $c_1-D:=\epsilon_1h_1+\epsilon_2h_2$ be non--effective: we can assume $\epsilon_1\le-1$. The previous Lemma implies $E=\emptyset$ whence $\cI_{E\vert F}\cong\cO_F$. Sequence \eqref{seqIdeal} thus becomes
$$
0\longrightarrow \cO_F(D)\longrightarrow \cE\longrightarrow \cO_{F}(c_1-D) \longrightarrow 0.
$$
Recall that  $(\delta_1,\delta_2)\in\left\{\ (0,0), (0,1), (0,2), (1,0), (2,0)\ \right\}$. The cohomology of the above sequence twisted by $\cO_F(th)$ and Corollary \ref{cLineBundle} imply
$$
0=h^1\big(F,\cE(th)\big)=h^1\big(F,\cO_{F}(c_1-D+th)\big),\qquad t\in\bZ.
$$
Moreover the above sequence does not split because $\cE$ is assumed to be indecomposable, hence
$$
H^1(F,\cO_{F}(2D-c_1))\cong Ext^1_F\big(\cO_{F}(c_1-D), \cO_{F}(D))\big)\ne0.
$$
We have $2D-c_1\in\left\vert (\delta_1-\epsilon_1)h_1+(\delta_2-\epsilon_2)h_2\right\vert$. Since $\delta_i\ge0$, $i=1,2$, and $\epsilon_1\le -1$, it follows that $\delta_1-\epsilon_1\ge1$. Proposition \ref{pLineBundle} thus gives $\delta_2-\epsilon_2\le-2$, whence $\epsilon_2\ge2$. We conclude that $\epsilon_2-\epsilon_1\ge3$.

If we take $t:=-2-\epsilon_1$, then $c_1-D+th=-2h_1+(\epsilon_2-\epsilon_1-2)h_2$, hence again Proposition \ref{pLineBundle} implies $h^1\big(F,\cO_{F}(c_1-D+th)\big)\ne0$, a contradiction.
\end{proof}

\section{Theorem A: the upper bound and zero--loci of general sections}
\label{sUpperChern6}
In this section we will find a bound from above for the first Chern class $c_1=\alpha_1h_1+\alpha_2h_2$ of an indecomposable, initialized, aCM bundle $\cE$ of rank $2$ on $F$. To this purpose, from now on we will assume that the second Chern class of $\cE$ is $c_2:=\beta_1h_2^2+\beta_2h_1^2$.

If $\omega_2$ is the second Chern class of the sheaf $\Omega_{F}^1$, Riemann--Roch theorem yields
\begin{equation}
  \label{RRgeneral}
    \chi(\cE)=2+{1\over6}(c_1^3-3c_1c_2)-{1\over2}(c_1^2h-2c_2h)+{1\over{12}}(4c_1h^2+\omega_2c_1).
\end{equation}

Easy computations yield
\begin{gather*}
\label{chern}
c_1^3=3(\alpha_1^2\alpha_2+\alpha_1\alpha_2^2),\\
c_1^2h=\alpha_1^2+4\alpha_1\alpha_2+\alpha_2^2,\\
c_1h^2=3(\alpha_1+\alpha_2).
\end{gather*}
Recall that $p_1$ is isomorphic to the canonical map $\bP(\Omega_{\p2}^1(2))\to \p2$, hence it is smooth. In particular we have the  two exact sequences
\begin{gather*}
0\longrightarrow\Omega^1_{F\vert\bP^2}\longrightarrow p_1^*\Omega^1_{\bP^2}(2)\otimes \cO_F(-h_2)\longrightarrow\cO_F\longrightarrow0,\\
0\longrightarrow p_1^*\Omega^1_{\bP^2}\longrightarrow\Omega^1_{F}\longrightarrow\Omega^1_{F\vert\bP^2}\longrightarrow0
\end{gather*}
Another Chern class computation thus gives $\omega_2=6h_1h_2$, hence $\omega_2c_1=6(\alpha_1+\alpha_2)$.

We have $h^i\big(F,\cE^\vee(-h)\big)=h^{3-i}\big(F,\cE(-h)\big)=0$ for $i\ge1$. Since
$$
h^0\big(F,\cI_{E\vert F}(-D-h)\big)\le h^0\big(F,\cO_{F}(-D-h)\big)=0
$$
(see Sequence \eqref{seqStandard}), it follows that
$$
\chi\big(\cE^\vee(-h)\big)=h^0\big(F,\cE^\vee(-h)\big)=h^0\big(F,\cO_F(D-c_1-h)\big)=0
$$
(due to sequence \eqref{seqIdeal} and to the effectiveness of $c_1-D$ proved in Proposition \ref{pLower6}). Such an equality, Formula \eqref{RRgeneral} for $\cE^\vee(-h)$ and Equalities \eqref{chern} yield
\begin{equation}
\label{c_1c_2}
c_1c_2=\alpha_1\beta_1+\alpha_2\beta_2=\alpha_1^2\alpha_2+\alpha_1\alpha_2^2.
\end{equation}

We also have $h^i\big(F,\cE^\vee\big)=h^{3-i}\big(F,\cE(-2h)\big)=0$, for $i\ge 1$. If $D=0$, then $E\ne\emptyset$, hence $h^0\big(F,\cI_{E\vert F}\big)=h^0\big(F,\cI_{E\vert F}(-D)\big)=0$. If $D\ne0$, then
$$
h^0\big(F,\cI_{E\vert F}(-D)\big)\le h^0\big(F,\cO_{F}(-D)\big)=0.
$$
Let
$$
e(c_1,D):=\left\lbrace\begin{array}{ll} 
    0\quad&\text{if $D\ne c_1$,}\\
    1\quad&\text{if $D=c_1$.}
  \end{array}\right.
$$
Since $(s)_0\ne\emptyset$, it follows that 
$$
\chi(\cE^\vee)=h^0\big(F,\cE^\vee\big)=h^0\big(F,\cO_F(D-c_1)\big)=e(c_1,D)
$$
Combining as above Sequences \eqref{seqStandard}, \eqref{seqIdeal} and Formulas \eqref{RRgeneral} for $\cE^\vee$ and \eqref{c_1c_2}, we finally obtain
\begin{equation}
\label{hc_2}
hc_2=\beta_1+\beta_2=2+\frac{1}{2}\left(\alpha_1^2+4\alpha_1\alpha_2+\alpha_2^2-3\alpha_1-3\alpha_2\right)-e(c_1,D).
\end{equation}

\begin{lemma}
  \label{lInvariantE}
  Let $\cE$ be a vector bundle of rank $2$ on $F$ and $s\in H^0\big(F,\cE\big)$ a section such that $E:=(s)_0$ has pure codimension $2$. Then
  $$
  \deg(E)=h c_2,\qquad
  p_a(E)=\frac12c_1c_2-hc_2+1.
  $$
 \end{lemma}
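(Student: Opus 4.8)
My plan is to exploit the fact that, since $E=(s)_0$ has pure codimension $2$ and is the zero locus of a section of a rank $2$ bundle, the section is regular and $E$ is a local complete intersection subscheme of $F$. This gives three standard inputs at once: the fundamental class of $E$ in $A^2(F)$ is $[E]=c_2(\cE)=c_2$; the normal bundle is $\cN_{E\vert F}\cong\cE\otimes\cO_E$, so that $\det(\cN_{E\vert F})\cong\cO_E(c_1)$; and $E$, being lci of dimension $1$, is a Gorenstein curve. The first formula is then immediate: the degree of $E$ as a curve in $\p7$ is the intersection number with the hyperplane class, so $\deg(E)=h\cdot[E]=hc_2$.

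For the genus, I would use adjunction rather than a direct Euler characteristic computation. Because $E\subseteq F$ is lci, the adjunction formula $\omega_E\cong\omega_F\otimes\det(\cN_{E\vert F})\otimes\cO_E$ holds; combined with $\omega_F\cong\cO_F(-2h)$ (recorded in Lemma \ref{lGG6}) and $\det(\cN_{E\vert F})\cong\cO_E(c_1)$, this yields $\omega_E\cong\cO_E(c_1-2h)$. Taking degrees along $E$ and using $[E]=c_2$ gives $\deg(\omega_E)=(c_1-2h)\cdot c_2=c_1c_2-2hc_2$. Finally, Riemann--Roch on the Gorenstein curve $E$ gives $\deg(\omega_E)=2p_a(E)-2$, where $p_a(E):=1-\chi(\cO_E)$, and solving produces exactly
$$
p_a(E)=\tfrac12 c_1c_2-hc_2+1.
$$

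The only point requiring care is that $E$ is allowed to be singular, reducible, or non-reduced, so I must invoke adjunction and the genus formula in their lci/Gorenstein generality rather than assuming $E$ is a smooth curve; both are standard, and this is the main (though mild) obstacle. As a cross-check, and as an alternative route that avoids adjunction entirely, one can instead twist Sequence \eqref{seqIdeal} (with $D=0$) by $\cO_F(-c_1)$ to get $0\to\cO_F(-c_1)\to\cE^\vee\to\cI_{E\vert F}\to0$, combine it with Sequence \eqref{seqStandard} to obtain $\chi(\cO_E)=\chi(\cO_F)-\chi(\cE^\vee)+\chi(\cO_F(-c_1))$, and evaluate each term through the Riemann--Roch formula \eqref{RRgeneral} together with the Chern class expressions in \eqref{chern}; this recovers the same value of $\chi(\cO_E)$ and hence of $p_a(E)$, confirming the adjunction computation.
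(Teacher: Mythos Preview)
Your proof is correct and follows essentially the same approach as the paper's: the paper does not spell out the argument but refers to \cite{C--F--M1}, Lemma 2.1, remarking that the proof there carries over verbatim because it only depends on the isomorphism $\omega_F\cong\cO_F(-2h)$; your adjunction computation is exactly an argument of this kind, using $\omega_F\cong\cO_F(-2h)$ together with $\det(\cN_{E\vert F})\cong\cO_E(c_1)$ and $[E]=c_2$.
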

 \begin{proof}
 The result was proved in \cite{C--F--M1} (see Lemma 2.1) when $F\cong\p1\times\p1\times\p1$. The same proof holds verbatim also in this case, because it only depends on the isomorphism $\omega_F\cong\cO_F(-2h)$.
 \end{proof}
 
Assume that $E\ne\emptyset$. Its class in $A^2(F)$ is $c_2(\cE(-D))=c_2-c_1D+D^2$. Since $\vert h_i\vert$, $i=1,2$ is base--point--free on $F$, $D\ge0$ and $c_1-D$ is effective (see Proposition \ref{pLower6}), it follows that
\begin{equation}
  \label{positivity3}
  \beta_i\ge \beta_i-h_i(c_1-D)D=h_i c_2-h_i(c_1-D)D=h_i c_2(\cE(-D))\ge0,\qquad i=1,2.
\end{equation}

\begin{proposition}
  \label{pUpper6}
  If $\cE$ is an indecomposable, initialized, aCM bundle of rank $2$ on $F$, then $2h-c_1\ge0$.

  Moreover, if equality holds, then $\cE$ is Ulrich, hence the zero--locus $E:=(s)_0$ of a general section $s\in H^0\big(F,\cE\big)$ is a smooth elliptic curve of degree $8$.
\end{proposition}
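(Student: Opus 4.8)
Write $c_1=\alpha_1h_1+\alpha_2h_2$ and $c_2=\beta_1h_2^2+\beta_2h_1^2$, and normalise $\alpha_1\le\alpha_2$. By Proposition \ref{pLower6} we have $\alpha_i\ge 0$, while the discussion after Lemma \ref{lGG6} already yields $\alpha_i\le 4$ and $\alpha_1\le 2$; hence proving $2h-c_1\ge 0$ amounts to excluding $\alpha_2\in\{3,4\}$. The plan is to read off the degree and genus of the pure codimension-two part $E$ of a general section from the Chern identities \eqref{c_1c_2} and \eqref{hc_2}, and then to show that for $\alpha_2\ge 3$ such an $E$ cannot be cut out on $F$ by an indecomposable aCM bundle. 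First I would record, via Lemma \ref{lInvariantE}, that $\deg E=\beta_1+\beta_2$ and $p_a(E)=\tfrac12c_1c_2-hc_2+1$ are explicit functions of $(\alpha_1,\alpha_2)$ once $\beta_1,\beta_2$ are solved from the two linear equations \eqref{c_1c_2}, \eqref{hc_2} (with $e(c_1,D)=0$, since the admissible divisorial parts force $D=c_1$ only for $c_1\in\{0,h_2,2h_2\}$).

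Several cases fall to this bookkeeping. The solution $(\beta_1,\beta_2)$ of \eqref{c_1c_2}--\eqref{hc_2} is non-integral for $(\alpha_1,\alpha_2)=(1,3)$, and for $\alpha_1=0$, $\alpha_2\ge 3$ the equations give $\beta_2=0$ and $p_a(E)<0$; together with the positivity $\beta_i\ge 0$ of \eqref{positivity3} and a fibre-restriction argument (the class $c_2=\beta_1h_2^2$ then concentrates $E$ on fibres of $p_2$) this handles $(0,3),(0,4),(1,3)$. I would also check, using \eqref{seqIdeal} and Proposition \ref{pLineBundle}, that any admissible divisorial part $D\ne 0$ splits a line bundle off $\cE$ (the relevant $\Ext^1=H^1\big(F,\cO_F(2D-c_1)\big)$ vanishes), contradicting indecomposability; so for a general section we may take $D=0$ and work with $E$ of pure codimension two and class $c_2$.

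The heart of the matter is the remaining asymmetric cases $(1,4),(2,3),(2,4)$, where $\beta_1,\beta_2$ are positive integers and $p_a(E)\ge 0$, so that neither positivity nor a sign of the genus excludes them. Were $E$ arithmetically Cohen--Macaulay in $\bP^7$ one could finish at once, since an aCM curve is connected and satisfies the Castelnuovo-type bound $p_a\ge\deg-7$, which the computed invariants violate; but the aCM hypothesis on $\cE$ only controls the twists $\cE(th)$ along the diagonal, giving the vanishing of $h^i\big(F,\cI_{E\vert F}(c_1+th)\big)$ along the shifted line $c_1+th$ rather than along the main diagonal, so $E$ need not be aCM when $c_1$ is not a multiple of $h$. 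I therefore expect the real work here to proceed as in the unnumbered lemma preceding Proposition \ref{pLower6}: restrict to a general hyperplane section $S:=F\cap H$ and the finite scheme $Z:=E\cap H$, and play $\deg Z=\deg E$ and the genus of $E$ against the cohomology of the line bundles $\cO_S(c_1-th)$, all computed by Proposition \ref{pLineBundle}, until the resulting numerical inequality (the analogue of \eqref{positivity3}) is violated. Extracting enough off-diagonal vanishing from \eqref{seqIdeal}, \eqref{seqStandard} and Proposition \ref{pLineBundle} to force this contradiction is the step I expect to be the main obstacle.

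For the equality case $c_1=2h$ everything is symmetric and clean. Riemann--Roch \eqref{RRgeneral} with $\alpha_1=\alpha_2=2$ gives $\chi(\cE)=12$; since $\cE$ is aCM and $h^3\big(F,\cE\big)=h^0\big(F,\cE^\vee(-2h)\big)=0$ (as $\cE$ is initialized and $c_1\ge 0$), we get $h^0\big(F,\cE\big)=12=\rk(\cE)\deg(F)$, so $\cE$ is Ulrich. An Ulrich bundle is globally generated, so by Bertini a general section has $D=0$ and smooth zero locus $E$ of pure codimension two, with $\deg E=8$ and $p_a(E)=1$ by \eqref{hc_2} and Lemma \ref{lInvariantE}. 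Now $c_1=2h$ is a multiple of $h$, so the vanishing $h^i\big(F,\cI_{E\vert F}(c_1+th)\big)=h^i\big(F,\cE(th)\big)=0$ lies on the main diagonal and $E$ is aCM in $\bP^7$, hence connected; being smooth and connected of arithmetic genus $1$ it is a smooth elliptic curve, and having degree $8$ in $\bP^7$ it is an elliptic normal curve.
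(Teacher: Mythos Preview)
Your proposal has a genuine gap precisely where you flag it: the cases $(\alpha_1,\alpha_2)\in\{(1,4),(2,3),(2,4)\}$ are left open, and the hyperplane--section strategy you sketch for them is not only incomplete but unnecessary. The paper's key trick, which you are missing, is a regularity dichotomy. If $\cE$ is \emph{not} Castelnuovo--Mumford regular then $h^3\big(F,\cE(-3h)\big)\ne0$, so $h^0\big(F,\cE^\vee(h)\big)\ne0$; hence $\cE^\vee(th)$ is initialized for some $t\le1$, and applying Proposition~\ref{pLower6} to this dual bundle gives $2th-c_1\ge0$, whence $2h-c_1\ge0$ at once. If instead $\cE$ \emph{is} regular, then $h^0\big(F,\cE^\vee(h)\big)=h^3\big(F,\cE(-3h)\big)=0$; together with the aCM and initialized hypotheses this forces $\chi\big(\cE^\vee(h)\big)=0$, and Riemann--Roch turns this into the identity $12-3\alpha_1-3\alpha_2=0$, i.e.\ $\alpha_1+\alpha_2=4$. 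So in the regular case only $(1,3)$ and $(0,4)$ survive, and your three ``hard'' cases never occur.

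A second, smaller gap concerns $(0,4)$. Your sentence ``together with \dots\ a fibre-restriction argument \dots\ this handles $(0,3),(0,4),(1,3)$'' is not a proof: while $(0,3)$ already dies because $\alpha_1+\alpha_2\ne4$, the case $(0,4)$ genuinely requires work. The paper shows $E$ is four skew lines of class $h_2^2$, pushes them down via $p_2$ to four points $\Gamma\subset\p2$, computes $h^0\big(\p2,\cI_{\Gamma}(2)\otimes\Omega^1_{\p2}(1)\big)=0$ from regularity of $\cE$, and then runs a Beilinson/Koszul argument on the three possible configurations of $\Gamma$ to force $\cE\cong\cO_F(2h_2)^{\oplus2}$. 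Your ``$p_a(E)<0$'' observation only says $E$ is disconnected, which is not a contradiction in itself.

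Your equality case $c_1=2h$ is essentially correct and close to the paper's. One minor point: your claim that $D\ne0$ forces a splitting via $\Ext^1=H^1\big(F,\cO_F(2D-c_1)\big)$ is only valid when $E=\emptyset$; in general the extension class lives in $\Ext^1\big(\cI_{E\vert F}(c_1-D),\cO_F(D)\big)$, which is larger. The paper bypasses this by using global generation (from regularity or Ulrichness) to get $D=0$ directly.
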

\begin{proof}
We distinguish two cases according to whether $\cE$ is regular in the sense of Castelnuovo--Mumford or not.

In the latter case we infer $h^0\big(F,\cE^\vee(h)\big)=h^3\big(F,\cE(-3h)\big)\ne0$ because $\cE$ is aCM. Thus, if $t\in\bZ$ is such that $\cE^\vee(th)$ is initialized, we know that $t\le1$. Since $\cE^\vee(th)$ is aCM too, we know that  $2th-c_1=c_1(\cE^\vee(th))$ is effective for some $t\le1$, due to Proposition \ref{pLower6}. We conclude that $2h-c_1$ is effective too.

Let $\cE$ be regular, whence globally generated. Thus the zero--locus $E:=(s)_0$ of a general section $s\in H^0\big(F,\cE\big)$ is a smooth curve: in particular its divisorial part $D$ is zero. Let $\alpha_1\le\alpha_2$: we already know that $0\le \alpha_1\le2$ and $\alpha_2\le 4$. We are interested in the cases $\alpha_2=3,4$, so that $c_1\ne0$.

We have $h^i\big(F,\cE^\vee(h)\big)=h^{3-i}\big(F,\cE(-3h)\big)=0$, $i=0,1,2,3$, because $\cE$ is aCM, regular and initialized. Combining the above remarks with equality \eqref{RRgeneral}, Formulas  \eqref{c_1c_2}, \eqref{hc_2} and the vanishing of $e(c_1,D)$ (recall that $c_1\ne0$ and $D=0$), we finally obtain
$$
0=\chi(\cE^\vee(h))=12-3\alpha_1-3\alpha_2.
$$
Thus $(\alpha_1,\alpha_2)\in\{\ (1,3), (0,4)\ \}$.

If $c_1=h_1+3h_2$, then Formulas \eqref{hc_2} and \eqref{c_1c_2} give
$$
\beta_1+\beta_2=  h c_2=7,\qquad c_1c_2=\beta_1+3\beta_2=12.
$$
subtracting the two equation each other we obtain $2\beta_2=5$, a contradiction: thus such a case cannot occur.

We will now prove that if  $c_1$ is $4h_2$, then $\cE$ splits as sum of invertible sheaves, thus such a case cannot occur too. Equalities \eqref{c_1c_2}, \eqref{hc_2} and the Inequalities \eqref{positivity3} force $c_2=4h_2^2$. Lemma \eqref{lInvariantE} implies $\deg(E)=4$ and $p_a(E)=-3$. 

Recall that for each pair of curves $C'$ and $C''$ without common components, the arithmetic genera of $C'$, $C''$ and $C'\cup C''$ satisfy the equality
$$
p_a(C'\cup C'')=p_a(C')+p_a(C'')+c-1
$$
where $c$ is the degree of the intersection $C'\cap C''$. It follows that $E$, being smooth, is necessarily the union of $4$ pairwise skew lines whose cycle in $A^2(F)$ is $h_2^2$ (indeed it is easy to check that the class of each line on $F$ is either $h_1^2$, or $h_2^2$).

Consider the projection $p_2\colon F\cong\bP(\Omega_{\p2}^1(2))\to\p2$ and let $\Gamma:=p_2(E)\subseteq\p2$. The scheme $\Gamma$ is a set of $4$ pairwise distinct points and $ p_2^*\cI_{\Gamma\vert\p2}\cong\cI_{E\vert F}$. Since $F\cong\bP(\Omega_{\p2}^1(2))$, it follows that $ p_2{}_*\cO_F(h_1)\cong \Omega_{\p2}^1(2)$. Moreover $\cO_F(h_2)=p_2^*\cO_{\p2}(1)$, hence
\begin{align*}
h^0\big(\p2,\cI_{\Gamma\vert\p2}(2)\otimes \Omega_{\p2}^1(1)\big)&=h^0\big(\p2,\cI_{\Gamma\vert\p2}(1)\otimes \Omega_{\p2}^1(2)\big)=\\
&=h^0\big(\p2,\cI_{\Gamma\vert\p2}\otimes \Omega_{\p2}^1(2)\otimes  p_2{}_*\cO_F(h_2)\big)=\\
&=h^0\big(F, p_2^*\cI_{\Gamma\vert\p2}\otimes \cO_F(h_1)\otimes \cO_F(h_2)\big)=h^0\big(F,\cI_{E\vert F}(h)\big).
\end{align*}
In order to compute such a last dimension we use the cohomology of Sequence \eqref{seqIdeal} twisted by $\cO_F(h-4h_2)$. Thanks to Proposition \ref{pLineBundle} we have $h^0\big(\p2,\cI_{E\vert F}(h)\big)=h^0\big(\p2,\cE^\vee(h)\big)=h^3\big(\p2,\cE(-3h)\big)$, which is zero because we are assuming $\cE$ regular.

The points of $\Gamma$ are either in general position, or three of them lie on a line not containing the fourth one, or they are aligned.

In the first case, we have the Koszul resolution
\begin{equation}
\label{Koszul}
0\longrightarrow\cO_{\p2}(-2)\longrightarrow\cO_{\p2}^{\oplus2}\longrightarrow\cI_{\Gamma\vert\p2}(2)\longrightarrow0,
\end{equation}
We have both Sequence \eqref{seqIdeal} and the pull back of the Koszul resolution above via $p_2^*$, i.e.
  \begin{gather*}
    0\longrightarrow\cO_F\longrightarrow \cE\longrightarrow  \cI_{E\vert F}(4h_2)\longrightarrow0,\\
    0\longrightarrow\cO_F\longrightarrow \cO_F(2h_2)^{\oplus2}\longrightarrow  \cI_{E\vert F}(4h_2)\longrightarrow0,
  \end{gather*}
(we used the isomorphisms $ p_2^*\cI_{\Gamma\vert\p2}\cong\cI_{E\vert F}$ and $ p_2^*\cO_{\p2}(1)\cong\cO_{F}(h_2)$ in order to obtain the second sequence). We conclude by observing that Theorem \ref{tSerre} yields $\cE\cong \cO_F(2h_2)^{\oplus2}$, because $h^1\big(F,\cO_F(-4h_2)\big)=0$ (again by Proposition \ref{pLineBundle}).

In the second case, up to a proper linear change of coordinates, we can assume that the homogeneous ideal of $\Gamma$ is $(x_0x_1,x_0x_2,x_1x_2(x_1-x_2))$: it follows the existence of a resolution of the form
$$
0\longrightarrow\cO_{\p2}(-1)\oplus\cO_{\p2}(-2)\longrightarrow\cO_{\p2}^{\oplus2}\oplus\cO_{\p2}(-1)\longrightarrow\cI_{\Gamma\vert\p2}(2)\longrightarrow0,
$$
Consider Beilinson Theorem in its second form (see \cite{Hu}, Proposition 8.28). Using the above sequence, Bott formulas and the vanishing $h^0\big(\p2,\cI_{\Gamma\vert\p2}(2)\otimes \Omega_{\p2}^1(1)\big)=0$, we see that the only non zero terms are
\begin{gather*}
E_1^{0,0}=H^0\big(\p2,\cI_{\Gamma\vert\p2}(2)\otimes \cO_{\p2}\big)\otimes\cO_{\p2}\cong\cO_{\p2}^{\oplus2},\\
E_1^{-2,1}=H^1\big(\p2,\cI_{\Gamma\vert\p2}(2)\otimes \cO_{\p2}(-1)\big)\otimes\cO_{\p2}(-2)\cong\cO_{\p2}(-2).
\end{gather*}
Since Beilinson spectral sequence abuts to
$$
E^{r,s}_{\infty}\cong\left\lbrace\begin{array}{ll}
\cI_{\Gamma\vert \p2}(2)\qquad&\text{if $r+s=0$,}\\
0\qquad&\text{if $r+s\ne0$,}
 \end{array}\right.
$$
Thus we would have an exact sequence analogous to Sequence \eqref{Koszul}. In particular, $E$ would be the complete intersection of two conics, a contradiction.

Finally, in the third case we have the exact sequence
$$
0\longrightarrow\cO_{\p2}(-3)\longrightarrow\cO_{\p2}(1)\oplus\cO_{\p2}(-2)\longrightarrow\cI_{\Gamma\vert\p2}(2)\longrightarrow0.
$$
It is immediate to check that $h^0\big(\p2,\cI_{\Gamma\vert\p2}(1)\otimes \Omega_{\p2}^1(2)\big)=3$. Thus this case cannot occur.
\end{proof}

In order to complete the proof of Theorem A stated in the introduction, we have only to show that if $\cE$ is an indecomposable, initialized, aCM bundle, then its general section vanishes exactly along a curve. To this purpose we assume  $\alpha_1\le\alpha_2$: thanks to Propositions \ref{pLower6} and \ref{pUpper6} we can assume that $c_1\ne0,2h$, hence we restrict our attention to the cases $(0,1)$, $(0,2)$, $(1,1)$, $(1,2)$.

Let  $s\in H^0\big(F,\cE\big)$ be a general section and assume that satisfies $(s)_0=E\cup D$ where $E$ has codimension $2$ (or it is empty) and $D\in\vert\delta_1h_1+\delta_2h_2\vert$ is non--zero. We already know that  $  (\delta_1,\delta_2)\in\left\{\ (0,1), (0,2)\ \right\}$ up to permutations. For each value of $(\alpha_1,\alpha_2)$ and $(\delta_1,\delta_2)$ such that $c_1-D\ge0$ we can compute $hc_2$ and $c_1c_2$ with the help of Equalities \eqref{hc_2} and \eqref{c_1c_2}, hence also $(\beta_1,\beta_2)$. Taking into account that $c_2(\cE(-D))$ is the class of $E$ inside $A^2(F)$ we are finally able to write down the following table.
$$
\begin{array}{|c|c|c|c|c|c|c|}           \hline
  { (\alpha_1,\alpha_2) } & {(\delta_1,\delta_2) } & e(c_1-D) & hc_2 & c_1c_2 & {(\beta_1,\beta_2) } &E \\ \hline
  (0,1) & (0,1) & 1 & 0 & 0 & (0,0) & \emptyset \\  \hline
  (0,2) & (0,1) & 0 & 1 & 0 & (1,0) & \emptyset  \\  \hline
  (0,2) & (0,2) & 1 & 0 & 0 & (0,0) & \emptyset \\  \hline
  (1,1) & (0,1) & 0 & 2 & 2 & (1,1) & \emptyset \\  \hline
  (1,1) & (0,1) & 0 & 2 & 2 & (2,0) & h_2^2-h_1^2 \\  \hline
  (1,1) & (0,1) & 0 & 2 & 2 & (0,2) & -h_2^2+h_1^2 \\  \hline
  (1,2) & (0,1) & 0 & 4 & 6 & (2,2) & h_1^2 \\  \hline
  (1,2) & (0,2) & 0 & 4 & 6 & (2,2) & \emptyset \\  \hline
  (1,2) & (1,0) & 0 & 4 & 6 & (2,2) & \emptyset \\  \hline
\end{array}
$$

We know that $D$ and $c_1-D$ are effective on $F$, thus globally generated. Moreover Proposition \ref{pLineBundle} implies $h^1\big(F,\cO_F(D)\big)=0$. If $E=\emptyset$, then $\cI_{E\vert F}(c_1-D)\cong\cO_F(c_1-D)$.  thus  Sequence \eqref{seqIdeal} would imply that $\cE$ should be globally generated. But if this were the case, then the general section of $\cE$ would vanish along a curve, contradicting the initial hypothesis. 

In the sixth case, we would have $Eh_2=-1$, which is absurd, because the linear system $\vert h_2\vert$ is base--point--free. Thus the sixth case cannot occur. A similar argument shows that also the seventh case cannot occur as well.

We conclude that only the fifth case is possible.
We have the following exact Koszul resolution
$$
0\longrightarrow \cO_F\longrightarrow \cO_F(h_1)^{\oplus2}\longrightarrow\cI_{E\vert F}(2h_1)\longrightarrow0.
$$
It follows that $\cI_{E\vert F}(h)$ is globally generated. Sequence \eqref{seqIdeal} would imply that $\cE$ should be globally generated because $c_1-D=h$ and $h^1\big(F,\cO_F(h_2)\big)=0$. But we already checked that this leads to a contradiction, hence such a case cannot occur.

We summarize the above discussion in the following statement.

\begin{lemma}
\label{lCodimension}
  If $\cE$ is an indecomposable, initialized, aCM bundle of rank $2$ on $F$, then the zero--locus $(s)_0$ of a general section $s\in H^0\big(F,\cE\big)$ has pure codimension $2$.
\end{lemma}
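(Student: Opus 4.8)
The plan is to assemble the two bounds already in hand and then dispose of the finitely many intermediate Chern classes one at a time. Writing $c_1=\alpha_1h_1+\alpha_2h_2$ with $\alpha_1\le\alpha_2$, Propositions \ref{pLower6} and \ref{pUpper6} give $0\le\alpha_1\le\alpha_2\le2$. The two extreme values of $c_1$ are already settled: when $c_1=0$ Proposition \ref{pLower6} states directly that every section vanishes in pure codimension $2$, while when $c_1=2h$ Proposition \ref{pUpper6} shows that $\cE$ is Ulrich, hence globally generated, so that a general section vanishes along a smooth elliptic curve. It therefore suffices to treat the four remaining cases $(\alpha_1,\alpha_2)\in\{(0,1),(0,2),(1,1),(1,2)\}$.

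For each of these I would argue by contradiction, assuming that the general section $s$ has $(s)_0=E\cup D$ with divisorial part $D\ne0$. From the preliminary analysis of Sequence \eqref{seqIdeal} we already know that $(\delta_1,\delta_2)\in\{(0,1),(0,2)\}$ up to permutation and that $c_1-D$ is effective. For every admissible pair $\big((\alpha_1,\alpha_2),(\delta_1,\delta_2)\big)$ the invariants $hc_2$ and $c_1c_2$ are pinned down by Equalities \eqref{hc_2} and \eqref{c_1c_2}, which recovers $(\beta_1,\beta_2)$ and hence the class $c_2(\cE(-D))=c_2-c_1D+D^2$ of $E$ in $A^2(F)$. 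This produces a short finite list of numerical possibilities.

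Each entry of the list would then be excluded by one of three mechanisms. If $E=\emptyset$, Sequence \eqref{seqIdeal} reduces to an extension of $\cO_F(c_1-D)$ by $\cO_F(D)$; since both $D$ and $c_1-D$ are effective, hence globally generated, and $h^1\big(F,\cO_F(D)\big)=0$ by Proposition \ref{pLineBundle}, the bundle $\cE$ would be globally generated, so its general section would vanish in pure codimension $2$ --- contradicting $D\ne0$. If the computed class of $E$ has negative intersection with one of the base--point--free systems $\vert h_1\vert$, $\vert h_2\vert$, it cannot be represented by an effective cycle, so that entry is impossible. The single surviving possibility carries an $E$ of class $h_1^2$ (a line); here an explicit Koszul resolution of $\cI_{E\vert F}$, pulled back from the resolution of a point in $\p2$, shows after a suitable twist that $\cI_{E\vert F}(c_1-D)$ is globally generated, and feeding this into Sequence \eqref{seqIdeal} again forces $\cE$ to be globally generated, the same contradiction.

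The crux is the numerical bookkeeping together with the global generation mechanism: one must check that the table is complete and that every row is killed by exactly one of the three devices. The delicate point is the last, ``line'' case, which resists any crude positivity bound and is disposed of only through the explicit resolution of $\cI_{E\vert F}$ combined with the vanishing $h^1\big(F,\cO_F(D)\big)=0$. Once all four intermediate Chern classes are excluded from having a divisorial part, the general section of every indecomposable, initialized, aCM bundle vanishes in pure codimension $2$.
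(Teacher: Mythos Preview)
Your proposal is correct and follows essentially the same approach as the paper: reduce to the four intermediate values of $c_1$, tabulate the admissible pairs $\big((\alpha_1,\alpha_2),(\delta_1,\delta_2)\big)$ via Formulas \eqref{hc_2} and \eqref{c_1c_2}, and eliminate each entry by exactly the three devices you describe --- global generation when $E=\emptyset$, negativity of $E\cdot h_i$ for two of the $(1,1)$ sub-cases, and the pulled-back Koszul resolution for the surviving line of class $h_1^2$ in the $(1,2)$ row. The paper's proof is the detailed execution of precisely this outline.
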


We are now able to give the proofs of Theorems A stated in the introduction.

{\medbreak
\noindent
{\it Proofs of Theorems A for $F$ and $\Phi$.}
Propositions \ref{pLower6}, \ref{pUpper6} and Lemma \ref{lCodimension} prove Theorem A for $F$.

Now we turn our attention to an initialized, indecomposable, aCM bundle $\cG$ of rank $2$ on $\Phi$ with first Chern class $\gamma_1=\alpha_1\eta_1+\alpha_2\eta_2$. Thanks to Theorem \ref{tVectorBundle} the bundle $\cE:=\cG\otimes\cO_F$ is initialized, indecomposable and aCM of rank $2$: taking into account of Remark \ref{rRestriction} and that its first Chern class is $c_1=\alpha_1h_1+\alpha_2h_2$, also Theorem A for $\Phi$ is completely proved.
\qed}

\section{The extremal cases}
\label{sExtremal}
In this section we will analyze and characterize the rank $2$ bundles on $F$ and $\Phi$ whose first Chern class is extremal, i.e. either $0$, or $2h$, or $2\eta$. 

Let $\cG$ be an indecomposable, initialized, aCM vector bundle of rank $2$ on $\Phi$ and let us denote by $\gamma_1$ and $\gamma_2$ its Chern classes. If $\gamma_2=\mu_1\eta_2^2+\mu_2\eta_1^2+\mu_3\eta_1\eta_2$, then the restriction $\cE:=\cG\otimes\cO_F$ satisfies $c_2=(\mu_1+\mu_3)h_2^2+(\mu_2+\mu_3)h_1^2$. Let $\Sigma$ and $E$ be the zero loci of general sections of $\cG$ and $\cE$ respectively. Thanks to Lemma \ref{lCodimension}, $E$ has codimension $2$ inside $F$, hence the same is true for $\Sigma$, by Remark \ref{rRestriction}. In this case the class of  $\Sigma$  in $A^2(\Phi)$ is $\gamma_2$ so that
$$
\deg(\Sigma)=\gamma_2\eta^2=\mu_1+\mu_2+2\mu_3\ge0.
$$
Moreover $\vert \eta_i\vert$ is base--point--free for $i=1,2$, whence
\begin{equation}
\label{positivity4}
\mu_1=\gamma_2h_1^2\ge 0,\qquad \mu_2=\gamma_2h_2^2\ge 0,\qquad\mu_3=\gamma_2h_1h_2\ge0.
\end{equation}

Let $\gamma_1=0$. We know that the zero--locus  $\Sigma:=(\sigma)_0$ of a general section $\sigma\in H^0\big(\Phi,\cG\big)$ has codimension $2$ inside $\Phi$. Formula \eqref{hc_2} and Lemma \ref{lInvariantE} give that $E$, the hyperplane section of $\Sigma$, is a line, thus $\Sigma$ is a plane. Then looking at the second Chern class $c_2$ of $\cE$, either $\mu_1$ or $\mu_2$ is $1$, $\mu_3$ being zero. Hence the cycle of $\Sigma$ in $A^2(\Phi)$ is either $\eta_1^2$ or $\eta_2^2$.

Conversely, take a plane $\Sigma\subseteq\Phi$. Since $\deg(\Sigma)=1$ we can assume that its cycle in $A^2(\Phi)$ is $\eta_2^2$. We know that $\omega_{\Sigma}\cong\cO_{\Sigma}(-3\eta)$. Since $\omega_{\Phi}\cong\cO_{\Phi}(-3\eta)$, it follows that $\det(\cN_{\Sigma\vert \Phi})\cong\cO_{\Sigma}$ by adjunction formula on $\Phi$. Theorem \ref{tSerre} with $\mathcal L:=\cO_{\Phi}$ yields the existence of a unique vector bundle $\cG$ of rank $2$ fitting into a sequence of the form
$$
0\longrightarrow \cO_{\Phi}\longrightarrow \cG\longrightarrow \cI_{\Sigma\vert \Phi}\longrightarrow 0.
$$
Hence $h^1\big(\Phi,\cG(t\eta)\big)\le h^1\big(\Phi, \cI_{\Sigma\vert \Phi}(t\eta)\big)$, $t\in \bZ$. From the cohomology of the exact sequence
$$
0\longrightarrow \cI_{\Phi\vert\p8}\longrightarrow \cI_{\Sigma\vert\p8}\longrightarrow \cI_{\Sigma\vert \Phi}\longrightarrow 0
$$
and the vanishing $h^1\big(\Phi,\cI_{\Sigma\vert\p8}(t)\big)=h^2\big(\Phi,\cI_{\Phi\vert\p8}(t)\big)=0$ (recall that both $\Sigma$ and $\Phi$ are aCM), it follows that $h^1\big(\Phi, \cI_{\Sigma\vert \Phi}(t\eta)\big)=0$, i.e. $h^1\big(\Phi,\cG(t\eta)\big)=0$. A similar computation shows that $h^2\big(\Phi,\cG(t\eta)\big)=0$ too.

Finally $\gamma_1=0$, whence ${\cG}^\vee\cong\cG$, thus Serre's duality also yields $h^3\big(\Phi,\cG(t\eta)\big)=0$. We conclude that $\cG$ is an aCM bundle and it is trivial to check that $\cG$ is initialized.

If $\cG$ were decomposable, then $\cG\cong\cM\oplus \cM^{-1}$ because $\gamma_1=0$. Thus $\eta_2^2=\gamma_2=-{c}_1(\cM)^2$. But if $c_1(\cM)=m_1\eta_1+m_2\eta_2$, then $c_1(\cM)^2=m_1^2\eta_1^2+m_2^2\eta_2^2+2m_1m_2\eta_1\eta_2$ which cannot coincide with $-\eta_2^2$.

\begin{theorem}
\label{tplane}
Let $\cG$ be an indecomposable, initialized, aCM bundle of rank $2$ on $\Phi$ with $\gamma_1=0$. Then $\gamma_2=\eta_2^2$  up to permutation of the $\eta_i$'s.

Conversely, there exists an indecomposable, initialized, aCM bundle $\cG$ of rank $2$ on $\Phi$ with Chern classes $\gamma_1=0$ and $\gamma_2=\eta_2^2$.

Moreover the zero--locus of a general section of $\cG$ is a plane and each plane on $\Phi$ can be obtained in such a way.
\end{theorem}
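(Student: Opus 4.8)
The plan is to treat the three assertions separately. For the forward constraint on $\gamma_2$, I would first transport the codimension statement to $F$: by Remark \ref{rRestriction} the restriction $\cE:=\cG\otimes\cO_F$ is again indecomposable, initialized and aCM of rank $2$, and Lemma \ref{lCodimension} shows that its general section vanishes in pure codimension $2$; via the restriction sequence this forces the zero locus $\Sigma$ of a general section of $\cG$ to have pure codimension $2$ as well. Since $\gamma_1=0$ gives $c_1(\cE)=0$, Formula \eqref{hc_2} yields $hc_2=1$, and Lemma \ref{lInvariantE} applied to the hyperplane section $E=\Sigma\cap F$ gives $\deg(E)=1$ and $p_a(E)=0$, so $E$ is a line and $\deg(\Sigma)=\gamma_2\eta^2=1$. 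Writing $\gamma_2=\mu_1\eta_2^2+\mu_2\eta_1^2+\mu_3\eta_1\eta_2$, the positivity inequalities \eqref{positivity4} together with $\mu_1+\mu_2+2\mu_3=1$ force $\mu_3=0$ and exactly one of $\mu_1,\mu_2$ to equal $1$; that is, $\gamma_2=\eta_2^2$ up to swapping the two $\eta_i$.

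For the converse I would start from an arbitrary plane $\Sigma\subseteq\Phi$ of class $\eta_2^2$ and build $\cG$ through the Hartshorne--Serre correspondence. Adjunction on $\Phi$, using $\omega_\Sigma\cong\cO_\Sigma(-3\eta)$ and $\omega_\Phi\cong\cO_\Phi(-3\eta)$, gives $\det(\cN_{\Sigma\vert\Phi})\cong\cO_\Sigma$, so Theorem \ref{tSerre} with $\mathcal L:=\cO_\Phi$ produces a rank $2$ bundle $\cG$ with $\det\cG\cong\cO_\Phi$ (hence $\gamma_1=0$) and a section $\sigma$ with $(\sigma)_0=\Sigma$, fitting into
$$0\longrightarrow\cO_\Phi\longrightarrow\cG\longrightarrow\cI_{\Sigma\vert\Phi}\longrightarrow0.$$
The real work is checking that $\cG$ is aCM. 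From this sequence $h^1\big(\Phi,\cG(t\eta)\big)\le h^1\big(\Phi,\cI_{\Sigma\vert\Phi}(t\eta)\big)$, and I would kill the right-hand side by threading the sequence relating $\cI_{\Phi\vert\p8}$, $\cI_{\Sigma\vert\p8}$ and $\cI_{\Sigma\vert\Phi}$ through cohomology, using that both $\Sigma$ and $\Phi$ are aCM in $\p8$; the same mechanism gives the vanishing of $h^2$. Finally $h^3$ vanishes because $\gamma_1=0$ makes $\cG^\vee\cong\cG$, so Serre duality turns $h^3$ into an $h^0$ that the defining sequence already shows to be zero. Initialization is immediate from the same sequence.

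It remains to rule out decomposability and to record the surjectivity onto planes. If $\cG$ split, then $\gamma_1=0$ would force $\cG\cong\cM\oplus\cM^{-1}$, whence $\gamma_2=-c_1(\cM)^2$; but $c_1(\cM)=m_1\eta_1+m_2\eta_2$ gives $c_1(\cM)^2=m_1^2\eta_1^2+m_2^2\eta_2^2+2m_1m_2\eta_1\eta_2$, which can never equal $-\eta_2^2$, a contradiction, so $\cG$ is indecomposable. The surjectivity assertion is automatic, since every plane on $\Phi$ has class $\eta_1^2$ or $\eta_2^2$ (its degree being $1$) and the construction above was carried out for an arbitrary such plane. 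I expect the only genuine obstacle to be the bookkeeping in the aCM verification --- in particular obtaining the $H^1$ and $H^2$ vanishings uniformly in $t$ by passing through $\p8$ --- while the numerical restriction on $\gamma_2$ and the indecomposability step are essentially formal.
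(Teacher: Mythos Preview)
Your proposal is correct and follows essentially the same route as the paper: restrict to $F$ and use Lemma \ref{lCodimension}, Formula \eqref{hc_2} and Lemma \ref{lInvariantE} to force $\Sigma$ to be a plane and pin down $\gamma_2$; then run Hartshorne--Serre from an arbitrary plane, verify aCM by threading ideal sheaves through $\p8$, and rule out a splitting via the Chern--class computation. Two small corrections: the indecomposability of $\cE=\cG\otimes\cO_F$ is supplied by Theorem \ref{tVectorBundle}, not Remark \ref{rRestriction}; and on the fourfold $\Phi$ Serre duality sends $h^3\big(\Phi,\cG(t\eta)\big)$ to $h^1\big(\Phi,\cG((-t-3)\eta)\big)$, not to an $h^0$, so the vanishing of $h^3$ follows from the $h^1$-vanishing you have already established rather than from the defining sequence directly.
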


An analogous statement holds for $F$.

\begin{theorem}
\label{tline}
Let $\cE$ be an indecomposable, initialized, aCM bundle of rank $2$ on $F$ with $c_1=0$. Then $c_2=h_2^2$  up to permutation of the $h_i$'s.

Conversely, there exists an indecomposable, initialized, aCM bundle $\cE$ of rank $2$ on $F$ with Chern classes $c_1=0$ and $c_2=h_2^2$.

Moreover the zero--locus of a general section of ${\cE}$ is a line and each line on ${F}$ can be obtained in such a way.
\end{theorem}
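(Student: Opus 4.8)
The plan is to prove Theorem \ref{tline} by reducing it to the already-established Theorem \ref{tplane} via the restriction mechanism of Remark \ref{rRestriction} and Theorem \ref{tVectorBundle}, supplemented by a direct argument for the geometry of lines on $F$. The statement has three parts: determining $c_2$, establishing existence, and identifying the zero loci with lines on $F$.

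For the first part, suppose $\cE$ is indecomposable, initialized and aCM with $c_1=0$. By Lemma \ref{lCodimension} the zero--locus $E$ of a general section has pure codimension $2$, so its class in $A^2(F)$ equals $c_2=\beta_1h_2^2+\beta_2h_1^2$. The positivity inequalities \eqref{positivity3} give $\beta_i\ge0$, and since $\cE^\vee\cong\cE$ (as $c_1=0$) the bundle is again initialized and aCM, so $E$ is nonempty by Lemma \ref{lNonEmpty6}. I would compute its invariants via Lemma \ref{lInvariantE}: with $c_1=0$ and $D=0$, Formula \eqref{hc_2} gives $hc_2=\deg(E)=\beta_1+\beta_2=2-e(c_1,D)$, and here $D=0=c_1$ forces $e=1$, so $\deg(E)=1$ and $p_a(E)=\frac12 c_1c_2-hc_2+1=0$. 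Thus $E$ is a line on $F$, and since each line has class $h_1^2$ or $h_2^2$ (a fact I would record explicitly, as in the proof of Proposition \ref{pUpper6}), exactly one of $\beta_1,\beta_2$ equals $1$ and the other vanishes. Up to permuting the $h_i$ we get $c_2=h_2^2$.

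For existence and the converse, I would run the Hartshorne--Serre construction exactly as in Theorem \ref{tplane} but on $F$. Given a line $L\subseteq F$ with class $h_2^2$, adjunction on $F$ (using $\omega_F\cong\cO_F(-2h)$ and $\omega_L\cong\cO_L(-2h)$, since a line has $\deg=1$) yields $\det(\cN_{L\vert F})\cong\cO_L$. Applying Theorem \ref{tSerre} with $\mathcal L=\cO_F$ (whose $H^2,H^1$ vanish by Proposition \ref{pLineBundle}) produces a rank $2$ bundle $\cE$ with $c_1=0$ sitting in
\begin{equation*}
0\longrightarrow \cO_F\longrightarrow \cE\longrightarrow \cI_{L\vert F}\longrightarrow 0.
\end{equation*}
The aCM property follows by comparing $\cI_{L\vert F}$ with $\cI_{L\vert\p7}$ through the sequence twisting by $\cO_F(th)$ and using that both $F$ and $L$ are aCM in $\p7$; the vanishing $h^3=0$ comes from $\cE^\vee\cong\cE$ and Serre duality. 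Initialization is immediate. For indecomposability I would argue as in Theorem \ref{tplane}: a splitting with $c_1=0$ forces $\cE\cong\cM\oplus\cM^{-1}$, giving $c_2=-c_1(\cM)^2$, which can never equal $h_2^2$ since $c_1(\cM)^2=m_1^2\eta_1^2+m_2^2\eta_2^2+2m_1m_2\eta_1\eta_2$ has the wrong sign pattern.

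The cleanest route to the final clause---that every line on $F$ arises---is to invoke Theorem \ref{tVectorBundle} and Remark \ref{rRestriction} in tandem with Theorem \ref{tplane}. Given a line $L\subseteq F$, I expect it to be the hyperplane section of a plane $\Sigma\subseteq\Phi$ (this is where the flag-variety geometry of $F$ must be used: lines on $F$ of a fixed ruling class correspond to planes on $\Phi$ cut by the hyperplane defining $F$); the bundle $\cG$ from Theorem \ref{tplane} restricts to the desired $\cE$ with $(s)_0=F\cap(\sigma)_0=L$. The main obstacle I anticipate is precisely verifying this correspondence between lines on $F$ and the restriction data on $\Phi$ cleanly---making sure the two ruling classes $h_1^2,h_2^2$ on $F$ match $\eta_1^2,\eta_2^2$ on $\Phi$ under $F\cap-$, and that a general plane meets $F$ in a line of the expected class. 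Everything else is a faithful transcription of the $\Phi$--argument with $\eta_i$ replaced by $h_i$ and the relevant cohomology supplied by Proposition \ref{pLineBundle}.
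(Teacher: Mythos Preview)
Your overall strategy matches the paper's: the proof of Theorem \ref{tline} is meant to be a verbatim transcription of the argument for Theorem \ref{tplane}, replacing $\eta_i$ by $h_i$ and using Proposition \ref{pLineBundle} in place of K\"unneth. The paper says exactly this, singling out one point where the argument changes: the indecomposability check.

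That is precisely where your write--up slips. You compute $c_1(\cM)^2=m_1^2\eta_1^2+m_2^2\eta_2^2+2m_1m_2\eta_1\eta_2$, but this is the $\Phi$ computation. In $A(F)$ the relation $h_1^2-h_1h_2+h_2^2=0$ gives $h_1h_2=h_1^2+h_2^2$, so
\[
c_1(\cM)^2=(m_1^2+2m_1m_2)h_1^2+(2m_1m_2+m_2^2)h_2^2,
\]
and the equation $-c_1(\cM)^2=h_2^2$ becomes the pair $m_1(m_1+2m_2)=0$, $m_2^2+2m_1m_2=-1$, which one checks has no integer solution. The paper's proof is literally: ``the only change is in the proof of the indecomposability\ldots we have to verify that $(m_1^2+2m_1m_2)h_1^2+(2m_1m_2+m_2^2)h_2^2=-h_2^2$ cannot hold.'' Your ``wrong sign pattern'' reasoning does not survive the Chow ring relation on $F$ (e.g.\ $c_1(\cM)^2$ can have a negative coefficient on $h_2^2$, as when $m_1=-2$, $m_2=1$), so this step needs the correct computation.

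Your final paragraph is unnecessary and introduces an unjustified step. You have already run Hartshorne--Serre on $F$ directly from an arbitrary line $L$, producing an initialized aCM bundle with $(s)_0=L$; once indecomposability is fixed as above, that already proves ``every line on $F$ can be obtained in this way.'' Routing through $\Phi$ would require showing that every line on $F$ is $F\cap\Sigma$ for some plane $\Sigma\subseteq\Phi$, which you flag as an obstacle and never verify; drop this detour.
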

\begin{proof}
The only change is in the proof of the indecomposability of $\cE$. Indeed in this case $h_1h_2=h_1^2+h_2^2$, thus we have to verify that $(m_1^2+2m_1m_2){h}_1^2+(2m_1m_2+m_2^2){h}_2^2=-h_2^2$ cannot hold, which is a trivial computation.
\end{proof}

Besides the above general construction via codimension $2$ subschemes of $F$, we can also obtain examples of initialized, indecomposable aCM bundles with $c_1=0$ (and $\gamma_1=0$) by means of an interesting alternative construction via suitable pull--backs.

\begin{example}
\label{eLinePlane}
Let $\cE$ be any bundle defined by the exact sequence
$$
0\longrightarrow\cO_{F}(-h_2)\longrightarrow\cO_{F}\oplus p_2^*\Omega_{\p2}^1(1)\longrightarrow\cE\longrightarrow0
$$
(e.g. $\cE$ is the pullback on $F$ via $p_2$ of the restriction to a linear $\p2$ of a null correlation bundle on $\p3$). 
Hence $\cE$ is initialized, indecomposable and aCM on $F$ with $c_1=0$ and $c_2=h_2^2$.

Obviously a similar construction can be repeated on $\Phi$ obtaining an initialized, indecomposable, aCM bundle with $\gamma_1=0$ and $\gamma_2=\eta_2^2$.
\end{example}

Now we turn our attention to the case $c_1=2h$.  We first characterize such bundles in terms of the zero--locus of their general sections.

Assume that an indecomposable, initialized, aCM bundle $\cE$ of rank $2$ with $c_1=2h$ exists on $F$. Proposition \ref{pUpper6} guarantees that $\cE$ is Ulrich and that the zero--locus  $E:=(s)_0$ of a general section $s\in H^0\big(F,\cE\big)$ is a smooth curve inside $F$. 

Using Sequence \eqref{seqIdeal} we obtain that $h^1\big(F,\cI_{E\vert F}\big)=h^1\big(F,\cI_{E\vert F}(h)\big)=h^0\big(F,\cI_{E\vert F}(h)\big)=0$, whence both $h^0\big(E,\cO_{E}\big)=1$ and $E$ is linearly normal and non--degenerate (use Sequence \eqref{seqStandard}). Moreover, combining Lemma \ref{lInvariantE} and Formulas \eqref{hc_2}, \eqref{c_1c_2}, we obtain
$\deg(E)=hc_2=8$, $p_a(E)=1$.  We conclude that we can assume $E$ to be a non--degenerate, irreducible, smooth, elliptic curve of degree $8$, i.e. an elliptic normal curve.
 
Conversely, let $E$ be an elliptic normal curve on $F$. We know by adjunction that $\cO_E\cong\omega_E\cong\det(\cN_{E\vert F})\otimes\cO_F(-2h)$. The invertible sheaf $\cO_F(2h)$ satisfies the vanishing of the Theorem \ref{tSerre}, thus $E$ is the zero locus of a section $s$ of a vector bundle $\cE$ of rank $2$ on $F$ with $c_1=2h$ and the class of $E$ in $A^2(F)$ is $c_2$. 

Computing the cohomology of Sequence \ref{seqIdeal} twisted by $\cO_F(-h)$, we infer that $\cE$ is initialized, being $E$ non--degenerate. We now prove that it is also aCM.

Due to Proposition 1.1 and Corollary 2.2 of \cite{C--G--N}, $E$ is aCM, then $h^1\big(F,\cI_{E\vert\p7}(t)\big)=0$. The threefold $F$ is aCM, whence $h^2\big(F,\cI_{F\vert\p7}(t)\big)=0$. Taking the cohomology of sequence
$$
0\longrightarrow \cI_{F\vert\p7}\longrightarrow \cI_{E\vert\p7}\longrightarrow \cI_{E\vert F}\longrightarrow 0,
$$
it also follows that $h^1\big(F,\cI_{E\vert F}(th)\big)=0$. The twisted cohomology of the Sequence \eqref{seqIdeal} yields $h^1\big(F,\cE(th)\big)=0$. Such a vanishing also implies  $h^2\big(F,\cE(th)\big)=0$ by Serre's duality. We conclude that $\cE$ is aCM. Thanks to Proposition \ref{pUpper6} we also know that $\cE$ is Ulrich. Thus elliptic normal curves on $F$ correspond to Ulrich bundles on $F$ with $c_1=2h$. 

We deal with the possible values of $c_2$. Notice that the linear system $\vert h_i\vert$ on $F$ has dimension $2$ (see Proposition \ref{pLineBundle}). Let $D_i\in \vert h_i\vert$ be general. The cohomology of the exact sequence 
$$
0\longrightarrow \cO_F(h-h_i)\longrightarrow\cO_F(h)\longrightarrow\cO_{D_i}(h)\longrightarrow0
$$
yields $h^0\big(D_i,\cO_{D_i}(h)\big)=5$, thus $D_i$ spans a space of dimension $4$ in $\p7$. Since $E$ is non--degenerate, we know that the restriction to $E$ of $\vert h_i\vert$ has dimension at least $2$. Since $E$ is elliptic, it follows that its degree, which is $\beta_i$, is greater than $3$, thanks to Riemann--Roch theorem on the curve $E$. We conclude that the possible cases for $(\beta_1,\beta_2)$ are $(3,5)$, $(4,4)$ up to permutations, i.e. the bundle $\cE$ has $c_2$ equal to either  $3h_2^2+5h_1^2$, or $4h_1^1+4h_1^2$.

Both the cases actually occur and we will prove such an assertion in the two following examples.

\begin{example}
\label{e44}
Proposition \ref{pLineBundle} implies $\Ext^1_F\big(\cO_F(2h_1),\cO_F(2h_2)\big)\cong H^1\big(\cO_F(-2h_1+2h_2)\big)\cong k^{\oplus3}$.
In particular we obtain a family of indecomposable rank two Ulrich bundles on $F$ given by the non--trivial extensions
$$
0\longrightarrow\cO_F(2h_2)\longrightarrow\cE\longrightarrow\cO_F(2h_1)\longrightarrow 0.
$$
We have $c_1(\cE)=2h$ and $c_2(\cE)=4h_1h_2=4h_2^2+4h_1^2$.

We notice that the above construction cannot be extended on $\Phi$: indeed K\"unneth formulas imply that $h^1\big(\Phi,\mathcal L\big)=0$ for each line bundle $\mathcal L\in\Pic(\Phi)$.
\end{example}

\begin{example}
\label{e35}
As pointed out in \cite{C--M--P}, Notation 3.8, a non--empty family $\cM_\Phi$ of initialized, indecomposable, Ulrich bundles of rank $2$ is defined inside the moduli space of stable vector bundles of rank $2$ on $\Phi$ with Chern classes $\gamma_1=2\eta$, $\gamma_2=\eta_2^2+3\eta_1^2+2\eta_1\eta_2$. The elements of $\cM_\Phi$ fit into an exact sequence
\begin{equation}
\label{cmp}
0\longrightarrow\cG\to \cO_{\Phi}(2\eta_1+\eta_2)^{\oplus4}\longrightarrow \cO_{\Phi}(3\eta_1+\eta_2)^{\oplus2}\longrightarrow 0.
\end{equation}

Thanks to Theorem \ref{tVectorBundle} $\cE:=\cG\otimes\cO_F$ is an initialized, indecomposable Ulrich bundle on $F$: moreover $c_1=2h$ and $c_2=3h_2^2+5h_1^2$. It follows the existence of a non--empty family $\cM_F$ of initialized, indecomposable Ulrich bundles of rank $2$ inside the moduli space of stable vector bundles of rank $2$ on $F$ with such Chern classes. The restriction map $\cM_\Phi\to\cM_F$ is thus a morphism, which is surjective by construction.

\begin{claim}
The above restriction map is actually birational.
\end{claim}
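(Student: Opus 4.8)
\smallskip
The plan is to upgrade the surjectivity of the restriction map $\cM_\Phi\to\cM_F$ to injectivity on closed points. Since $\cM_\Phi$ and $\cM_F$ are irreducible families and we work in characteristic $0$, a bijective morphism between them is automatically birational (the generic fibre is a single reduced point, so the induced extension of function fields has degree $1$); combined with the surjectivity already established, injectivity will finish the proof. So I take $\cG_1,\cG_2\in\cM_\Phi$ whose restrictions $\cE_i:=\cG_i\otimes\cO_F$ satisfy $\cE_1\cong\cE_2$, and I aim to show $\cG_1\cong\cG_2$.

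First I would reduce the question to lifting a single isomorphism. Applying $-\otimes\cO_F$ to $\cG_1^\vee\otimes\cG_2$ gives the restriction sequence
$$0\longrightarrow \cG_1^\vee\otimes\cG_2(-\eta)\longrightarrow \cG_1^\vee\otimes\cG_2\longrightarrow \cE_1^\vee\otimes\cE_2\longrightarrow 0,$$
so that the natural map $\Hom_\Phi(\cG_1,\cG_2)\to\Hom_F(\cE_1,\cE_2)$ is surjective as soon as $H^1\big(\Phi,\cG_1^\vee\otimes\cG_2(-\eta)\big)=0$. Granting this vanishing, a fixed isomorphism $\phi\colon\cE_1\to\cE_2$ lifts to some $\tilde\phi\colon\cG_1\to\cG_2$. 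Since $\gamma_1=2\eta$ for both bundles, $\det\cG_1\cong\det\cG_2\cong\cO_\Phi(2\eta)$, whence $\det\tilde\phi\in H^0\big(\Phi,\cO_\Phi\big)=k$; as it restricts to $\det\phi\neq0$ on $F$, it is a nonzero constant, so $\tilde\phi$ is an isomorphism and $\cG_1\cong\cG_2$.

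The crux is thus the vanishing $H^1\big(\Phi,\cG_1^\vee\otimes\cG_2(-\eta)\big)=0$, and here I would exploit the explicit presentation \eqref{cmp}. Dualizing it gives
$$0\longrightarrow\cO_\Phi(-3\eta_1-\eta_2)^{\oplus2}\longrightarrow\cO_\Phi(-2\eta_1-\eta_2)^{\oplus4}\longrightarrow\cG_1^\vee\longrightarrow0,$$
and tensoring by $\cG_2(-\eta)$ produces
$$0\longrightarrow\cG_2(-4\eta_1-2\eta_2)^{\oplus2}\longrightarrow\cG_2(-3\eta_1-2\eta_2)^{\oplus4}\longrightarrow\cG_1^\vee\otimes\cG_2(-\eta)\longrightarrow0.$$
Because every member of $\cM_\Phi$ is Ulrich, hence aCM of rank $2$ on the fourfold $\Phi$, all intermediate cohomology of its twists vanishes; in particular $H^1\big(\Phi,\cG_2(-3\eta_1-2\eta_2)\big)=0$ and $H^2\big(\Phi,\cG_2(-4\eta_1-2\eta_2)\big)=0$. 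These two groups bracket $H^1\big(\Phi,\cG_1^\vee\otimes\cG_2(-\eta)\big)$ in the long exact cohomology sequence of the last display, forcing it to vanish.

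I expect this cohomological computation to be the only substantive point, and it is in fact routine once \eqref{cmp} is available. It is worth stressing that the argument uses nothing beyond the resolution \eqref{cmp} and the aCM property shared by all members of $\cM_\Phi$; consequently it establishes genuine injectivity of $\cM_\Phi\to\cM_F$ on closed points, not merely generic injectivity, which is more than enough for the claimed birationality.
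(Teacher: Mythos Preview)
Your overall strategy is exactly the paper's: reduce to the vanishing $h^1\big(\Phi,\cG_1^\vee\otimes\cG_2(-\eta)\big)=0$ via the restriction sequence, obtain it from the dual of \eqref{cmp} tensored by $\cG_2(-\eta)$, and conclude with the determinant argument. There is, however, a genuine gap in how you justify the two auxiliary vanishings.

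You write that $H^1\big(\Phi,\cG_2(-3\eta_1-2\eta_2)\big)=0$ and $H^2\big(\Phi,\cG_2(-4\eta_1-2\eta_2)\big)=0$ hold ``because every member of $\cM_\Phi$ is Ulrich, hence aCM \ldots\ all intermediate cohomology of its twists vanishes''. This is not what the aCM condition says: it only guarantees $h^i\big(\Phi,\cG_2(t\eta)\big)=0$ for $1\le i\le 3$ and $t\in\bZ$, i.e.\ for twists by multiples of the hyperplane class $\eta=\eta_1+\eta_2$. The line bundles $\cO_\Phi(-3\eta_1-2\eta_2)$ and $\cO_\Phi(-4\eta_1-2\eta_2)$ are \emph{not} of this form, so the aCM property is silent here. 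The paper fills this gap by using the resolution \eqref{cmp} a second time, now for $\cG_2$: twisting \eqref{cmp} by $\cO_\Phi(-3\eta_1-2\eta_2)$ and by $\cO_\Phi(-4\eta_1-2\eta_2)$ reduces the desired vanishings to K\"unneth computations on line bundles on $\Phi=\p2\times\p2$, which are immediate. Once you replace your appeal to aCM by this short computation, the argument is complete and coincides with the paper's.
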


To prove the above claim, it suffices to check that such a map is injective. To this purpose we first prove that if $\cG$, $\cG'$ are bundles in $\cM_\Phi$, then each morphism $u\colon \cG\otimes\cO_F\to \cG'\otimes\cO_F$ can be lifted to a morphism $v\colon \cG\to \cG'$, i.e. that the natural map $H^0\big(\Phi,\cG^\vee\otimes\cG'\big)\to H^0\big(F,\cG^\vee\otimes \cG'\otimes\cO_F\big)$ is surjective. Taking into account the cohomology standard restriction sequence of $\Phi$
$$
0\longrightarrow \cG^\vee\otimes\cG'(-\eta)\longrightarrow \cG^\vee\otimes\cG'\longrightarrow \cG^\vee\otimes \cG'\otimes\cO_F\longrightarrow 0
$$
it suffices to check that $h^1\big(\Phi,\cG^\vee\otimes\cG'(-\eta)\big)=0$.

Let us consider the dual of Sequence \eqref{cmp} tensored by $\cG'(-\eta)$, i.e.
$$
0\longrightarrow \cG'(-4\eta_1-2\eta_2)^{\oplus2}\longrightarrow \cG'(-3\eta_1-2\eta_2)^{\oplus4}\longrightarrow \cG^\vee\otimes\cG'(-\eta)\longrightarrow 0.
$$
It follows from Sequence \eqref{cmp} for $\cG'$ that $h^2\big(\Phi,\cG'(-4\eta_1-2\eta_2)\big)= h^1\big(\Phi,\cG'(-3\eta_1-2\eta_2)\big)=0$, we obtain $h^1\big(\Phi,\cG^\vee\otimes\cG'(-\eta)\big)=0$. 

Now assume that $u$ is an isomorphism. Thus 
$$
\det(v)\in H^0\big(\Phi,\det(\cG^\vee)\otimes\det(\cG')\big)\cong H^0\big(\Phi,\cO_\Phi\big)\cong k,
$$
because $\cG$ and $\cG'$ have the same first Chern class. Since the restriction to $F$ of $\det(v)$ is exactly $\det(u)$ which must be non--zero, being $u$ an isomorphism, it follows that $\det(v)$ is non--zero too, hence $v$ is an isomorphism.
\end{example}

\begin{theorem}
\label{tElliptic}
Let $\cE$ be an indecomposable, initialized, aCM bundle of rank $2$ on $F$ with $c_1=2h$. Then $c_2$ is either $3{h}_2^2+5{h}_2^2$, or $4{h}_2^2+4{h}_1^2$  up to permutation of the $h_i$'s.

Conversely, for each such a $c_2$, there exists an indecomposable, initialized, aCM bundle $\cE$ of rank $2$ on $F$ with Chern classes $c_1=2h$ and $c_2$.

Moreover each such a bundle $\cE$ is Ulrich, the zero--locus of a general section of ${\cE}$ is an elliptic normal curve and each elliptic normal curve on $F$ can be obtained in this way unless it is the complete intersection of two divisors in $\vert 2{h}_2\vert$ and $\vert 2{h}_1\vert$ (such a case can occur only if ${c}_2=4{h}_2^2+4{h}_1^2=4{h}_1{h}_2$).
\end{theorem}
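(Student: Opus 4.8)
The plan is to collect the facts already assembled in the discussion preceding the statement and then to isolate and settle the single genuinely new point, namely the precise dichotomy between the indecomposable bundles and the complete--intersection curves. For the necessity of the $c_2$--values I would simply invoke Proposition \ref{pUpper6}, which already yields that an indecomposable, initialized, aCM bundle $\cE$ of rank $2$ with $c_1=2h$ is Ulrich and that the zero locus $E:=(s)_0$ of a general section is a smooth, irreducible, non--degenerate curve; combining Lemma \ref{lInvariantE} with Formulas \eqref{hc_2} and \eqref{c_1c_2} pins down $\deg(E)=8$ and $p_a(E)=1$, so $E$ is an elliptic normal curve. The constraint on $c_2=\beta_1h_2^2+\beta_2h_1^2$ then comes from the linear--system computation already carried out: since $E$ is non--degenerate each $\vert h_i\vert$ restricts to $E$ in a system of dimension at least $2$, whence $\beta_i\geq 3$ by Riemann--Roch on the elliptic curve $E$; as $\beta_1+\beta_2=hc_2=8$, the only possibilities up to permutation are $(\beta_1,\beta_2)=(3,5)$ and $(4,4)$, that is $c_2=3h_2^2+5h_1^2$ or $c_2=4h_1^2+4h_2^2=4h_1h_2$.

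For existence and the converse I would appeal to the two constructions already displayed: Example \ref{e44} furnishes an indecomposable Ulrich bundle with $c_2=4h_1h_2$, while Example \ref{e35} furnishes one with $c_2=3h_2^2+5h_1^2$ by restriction from $\Phi$. For an arbitrary elliptic normal curve $E\subseteq F$, the discussion above (adjunction, Theorem \ref{tSerre} applied with $\cO_F(2h)$, and the aCM--ness of $E$ from \cite{C--G--N}) produces a rank $2$ bundle $\cE$ that is automatically initialized, aCM and hence Ulrich, with $E=(s)_0$; moreover $\cE$ is unique up to isomorphism because $h^1\big(F,\cO_F(-2h)\big)=0$ by Proposition \ref{pLineBundle}.

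The key remaining step, and the only one not already contained in the preceding text, is to show that this $\cE$ is indecomposable exactly when $E$ is \emph{not} the complete intersection of divisors in $\vert 2h_1\vert$ and $\vert 2h_2\vert$. I would argue as follows. Suppose $\cE\cong\mathcal{L}_1\oplus\mathcal{L}_2$. Each $\mathcal{L}_i$ is then a direct summand of an Ulrich bundle, hence an aCM, globally generated line bundle; the vanishing $h^0\big(F,\cE(-h)\big)=0$ forces $h^0\big(F,\mathcal{L}_i(-h)\big)=0$, and a monotonicity argument (multiplication by a section of $\cO_F(h)$) shows each $\mathcal{L}_i$ is genuinely initialized. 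By Corollary \ref{cLineBundle} its space of sections therefore has dimension in $\{1,3,6\}$, and since $h^0\big(F,\mathcal{L}_1\big)+h^0\big(F,\mathcal{L}_2\big)=h^0\big(F,\cE\big)=2\deg(F)=12$, both summands must be Ulrich line bundles, i.e.\ $\cO_F(2h_1)$ or $\cO_F(2h_2)$. The condition $c_1(\mathcal{L}_1)+c_1(\mathcal{L}_2)=2h$ then leaves only $\cE\cong\cO_F(2h_1)\oplus\cO_F(2h_2)$, so $c_2=4h_1h_2$ and $(s)_0$ is cut out by a section of $\cO_F(2h_1)$ together with one of $\cO_F(2h_2)$, i.e.\ it is the claimed complete intersection. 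Conversely, if $E$ is such a complete intersection, its Koszul resolution twisted by $\cO_F(2h)$ exhibits the extension of $\cI_{E\vert F}(2h)$ by $\cO_F$ as $\cO_F(2h_1)\oplus\cO_F(2h_2)$, which by the uniqueness in Theorem \ref{tSerre} must be $\cE$. I expect no real obstacle here: once Corollary \ref{cLineBundle} is in hand, the only delicate checks are the initialization of the summands and the bookkeeping of the possible values of $h^0$.
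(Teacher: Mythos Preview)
Your proposal is correct and follows essentially the same route as the paper: the paper's proof simply says that existence comes from Examples \ref{e44} and \ref{e35} and that everything else ``follows from the discussion after Example \ref{eLinePlane}'', and your first two paragraphs are precisely a faithful unpacking of that discussion.

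The one place where you go beyond the paper is the third paragraph, the dichotomy \emph{indecomposable $\Leftrightarrow$ not a complete intersection in $\vert 2h_1\vert\cap\vert 2h_2\vert$}. The paper does not spell this out in the proof of Theorem \ref{tElliptic}; it gives the parallel argument only later, inside the proof of Theorem \ref{tdelPezzo} for $\Phi$ (splitting the bundle as $\mathcal L_1\oplus\mathcal L_2$, using $h^0(\mathcal L_1)+h^0(\mathcal L_2)=12$ and the list of initialized aCM line bundles to force $\cO(2\eta_1)\oplus\cO(2\eta_2)$, then matching the two Serre extensions via $h^1=0$). Your argument is exactly that one transported to $F$, and the small extra observation you make---that both summands are globally generated, hence genuinely initialized---is a clean way to dispose of the ``one summand without sections'' alternative that the paper mentions for $\Phi$. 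So there is no real difference in strategy; you are just making explicit for $F$ what the paper writes out only for $\Phi$.
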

\begin{proof}
Examples \ref{e44} and \ref{e35} prove the existence of the bundles. The remaining part of the proof follows from the discussion after Example \ref{eLinePlane}.
\end{proof}

We will now focus our attention on the analogous problem of classifying indecomposable, initialized, aCM bundles ${\cG}$ of rank $2$ on $\Phi$ with $\gamma_1=2\eta$ and $\gamma_2=\mu_1\eta_2^2+\mu_2\eta_1^2+\mu_3\eta_1\eta_2$. As pointed out at the beginning of this section, $\cG\otimes\cO_F$ is an indecomposable, initialized, aCM bundle  of rank $2$ on $F$ with $c_1=2h$ and $c_2=(\mu_1+\mu_3)h_2^2+(\mu_2+\mu_3)h_1^2$. Hence $(\mu_1+\mu_3,\mu_2+\mu_3)$ is either $(3,5)$ or $(4,4)$ by Theorem \ref{tElliptic}.

\begin{theorem}
\label{tdelPezzo}
Let $\cG$ be an indecomposable, initialized, aCM bundle of rank $2$ on $\Phi$ with $\gamma_1=2\eta$. Then $\gamma_2=\eta_2^2+3\eta_1^2+2\eta_1\eta_2$  up to permutation of the $h_i$'s.

Conversely, there exists an indecomposable, initialized, aCM bundle $\cG$ of rank $2$ on $\Phi$ with Chern classes $\gamma_1=2\eta$ and $\gamma_2=\eta_2^2+3\eta_1^2+2\eta_1\eta_2$.

Moreover each such a bundle $\cG$ is Ulrich, the zero--locus of a general section of ${\cG}$ is a del Pezzo surface of degree $8$ isomorphic to the blow up $\mathbb F_1$ of $\p2$ at a point and each del Pezzo surface of degree $8$ on $\Phi$ can be obtained in this way unless it is the complete intersection of two divisors in $\vert 2{\eta}_2\vert$ and $\vert 2{\eta}_1\vert$ (such a case can occur only if ${\gamma}_2=4{\eta}_1{\eta}_2$).
\end{theorem}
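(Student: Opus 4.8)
The existence of a bundle $\cG$ with $\gamma_1=2\eta$ and $\gamma_2=\eta_2^2+3\eta_1^2+2\eta_1\eta_2$ is already furnished by Example \ref{e35} (through the family $\cM_\Phi$), so my plan concentrates on showing that this $\gamma_2$ is forced and on the geometry of $\Sigma$. First I would record that $\cG$ is Ulrich: by Remark \ref{rRestriction} and Theorem \ref{tVectorBundle} the restriction $\cE:=\cG\otimes\cO_F$ is an initialized, indecomposable, aCM bundle on $F$ with $c_1=2h$, hence Ulrich by Proposition \ref{pUpper6}, so $\cG$ is Ulrich too. In particular $\cG$ is globally generated, so a general section has a smooth zero locus $\Sigma$ of pure codimension $2$, and the cohomology of Sequence \eqref{seqIdeal} gives $h^0\big(\Phi,\cI_{\Sigma\vert\Phi}(\eta)\big)=h^1\big(\Phi,\cI_{\Sigma\vert\Phi}\big)=0$, whence $\Sigma$ is linearly normal and non-degenerate in $\p8$. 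Since its hyperplane section $E=\Sigma\cap F$ is an elliptic normal curve of degree $8$, the surface $\Sigma$ has degree $8$ with elliptic hyperplane sections, i.e.\ it is a del Pezzo surface of degree $8$; thus $\Sigma\cong\p1\times\p1$ or $\Sigma\cong\mathbb{F}_1$.

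Next I would pin down $\gamma_2=\mu_1\eta_2^2+\mu_2\eta_1^2+\mu_3\eta_1\eta_2$. The discussion preceding the theorem already gives $\mu_i\ge0$ and $(\mu_1+\mu_3,\mu_2+\mu_3)\in\{(3,5),(4,4)\}$ up to permutation. The three integers are intersection data of the two projections, $\mu_1=(\eta_1|_\Sigma)^2$, $\mu_2=(\eta_2|_\Sigma)^2$, $\mu_3=(\eta_1|_\Sigma)(\eta_2|_\Sigma)$, where $\eta_1|_\Sigma+\eta_2|_\Sigma=-K_\Sigma$ and both classes are nef. The Hodge index inequality $\mu_3^2\ge\mu_1\mu_2$, together with the constraint above, leaves only $(\mu_1,\mu_2,\mu_3)\in\{(1,3,2),(0,2,3)\}$ in the $(3,5)$ case and $\{(2,2,2),(1,1,3),(0,0,4)\}$ in the $(4,4)$ case. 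Computing the nef classes explicitly in the Néron--Severi groups of $\p1\times\p1$ and $\mathbb{F}_1$ then discards the non-realizable triples $(0,2,3)$ and $(1,1,3)$, and shows that $(1,3,2)$ occurs only on $\mathbb{F}_1$, with $\eta_1|_\Sigma=C_0+f$ and $\eta_2|_\Sigma=C_0+2f$, yielding exactly $\gamma_2=\eta_2^2+3\eta_1^2+2\eta_1\eta_2$; here $\mu_1=1$ means $\pi_1|_\Sigma$ is birational, realizing $\Sigma$ as $\p2$ blown up at one point.

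It then remains to discard the two surviving $(4,4)$ triples, and this is where I expect the real work to lie. For $(\mu_1,\mu_2,\mu_3)=(0,0,4)$, i.e.\ $\gamma_2=4\eta_1\eta_2$, the surface is $\Sigma=Q_1\times Q_2$, the complete intersection of divisors in $\vert2\eta_1\vert$ and $\vert2\eta_2\vert$; the Koszul complex identifies the (unique, by Theorem \ref{tSerre}) associated bundle as $\cO_\Phi(2\eta_1)\oplus\cO_\Phi(2\eta_2)$, which is decomposable, against the hypothesis on $\cG$. The harder case is $(\mu_1,\mu_2,\mu_3)=(2,2,2)$: here $\Sigma\cong\p1\times\p1$ with $\eta_1|_\Sigma=\eta_2|_\Sigma=\cO(1,1)$, so both projections $\pi_i|_\Sigma$ are degree-$2$ maps given by $3$-dimensional subspaces $V_1,V_2$ of the $4$-dimensional space $H^0\big(\p1\times\p1,\cO(1,1)\big)$. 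Since $\dim(V_1\cap V_2)\ge2$, two independent elements $C,C'\in V_1\cap V_2$ produce the relation $C\cdot C'=C'\cdot C$ among the products spanning $\cO_\Sigma(\eta)\cong\cO(2,2)$, so the image of the multiplication map $V_1\otimes V_2\to H^0\big(\cO(2,2)\big)$ has dimension at most $8$; hence $\Sigma$ is degenerate in $\p8$, contradicting the non-degeneracy established above. This eliminates $\p1\times\p1$ except as the decomposable complete-intersection case, leaving $\gamma_2=\eta_2^2+3\eta_1^2+2\eta_1\eta_2$ with $\Sigma\cong\mathbb{F}_1$.

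Finally, for the converse and for the assertion that every degree-$8$ del Pezzo surface on $\Phi$ arises in this way unless it is a complete intersection, I would run Theorem \ref{tSerre} in reverse. Given such a $\Sigma\subseteq\Phi$, adjunction yields $\det(\cN_{\Sigma\vert\Phi})\cong\cO_\Sigma(2\eta)$, and the vanishings $H^1\big(\Phi,\cO_\Phi(-2\eta)\big)=H^2\big(\Phi,\cO_\Phi(-2\eta)\big)=0$ (K\"unneth) give a unique rank-$2$ bundle $\cG$ with $\det\cG\cong\cO_\Phi(2\eta)$ and a section vanishing on $\Sigma$; this $\cG$ is aCM because $\Sigma$ and $\Phi$ are aCM (the same $\cI_{\Phi\vert\p8}\hookrightarrow\cI_{\Sigma\vert\p8}$ argument as in the $\gamma_1=0$ case, with the $h^3$-vanishing supplied by Serre's duality), it is initialized and Ulrich, and it is indecomposable precisely when $\Sigma$ is not the complete intersection $Q_1\times Q_2$. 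The main obstacle throughout is the degeneracy argument excluding the $(2,2,2)$ surface, since it is the one configuration that survives the Chern-class and Hodge-index bookkeeping and can only be killed by genuinely projective, linear-system geometry.
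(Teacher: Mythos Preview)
Your argument is correct and shares the paper's overall architecture---show $\cG$ is Ulrich, identify $\Sigma$ as a smooth non-degenerate del Pezzo octic (hence $Q=\p1\times\p1$ or $\mathbb{F}_1$), and analyse the restrictions $\eta_i|_\Sigma$---but the tactics differ in two places worth noting. First, the paper does not use the Hodge-type inequality $\mu_3^2\ge\mu_1\mu_2$; it instead fixes $\Sigma\cong\mathbb{F}_1$ or $\Sigma\cong Q$ and solves directly for the coefficients of $\eta_i|_\Sigma$ in the standard basis of $\Pic(\Sigma)$, running through all numerical possibilities case by case. Your Hodge shortcut trims the list of triples before that computation, which is tidier. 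Second, and more substantially, your exclusion of $(\mu_1,\mu_2,\mu_3)=(2,2,2)$ is cleaner than the paper's: the paper interprets the two maps $\varphi_i\colon Q\to\p2$ as restrictions of linear projections $\p3\dashrightarrow\p2$, writes the product map in explicit coordinates, and exhibits a hyperplane equation containing the image in two separate sub-cases according to the span of the six linear forms involved; your observation that two $3$-dimensional subspaces $V_1,V_2\subseteq H^0\big(Q,\cO(1,1)\big)\cong k^4$ must meet in dimension at least $2$, producing the syzygy $C\otimes C'-C'\otimes C$ in the kernel of the multiplication map $V_1\otimes V_2\to H^0\big(Q,\cO(2,2)\big)$, reaches the same degeneracy conclusion without coordinates. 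One small point you should make explicit: when you pass to the surviving triples $(0,0,4)$ and $(2,2,2)$ you silently assume $\Sigma\cong Q$. The same nef-class check you used for $(0,2,3)$ and $(1,1,3)$ shows neither triple is realisable on $\mathbb{F}_1$ (for $(2,2,2)$ one would need $a(2b-a)=2$ with $0\le a\le b$, which has no integer solution; for $(0,0,4)$ the only solution forces $\eta_2|_\Sigma=2C_0+f$, which is not nef), so the gap is trivial to fill, but it should be stated.
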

\begin{proof}
The claimed existence follows from Example \ref{e35}. 

In order to complete the proof of the statement, we preliminary observe that if $\gamma_1=2\eta$, then $\cG$ is Ulrich, hence its general section vanishes on a smooth surface $\Sigma$ whose class in $A^2(\Phi)$ is $\gamma_2=\mu_1\eta_2^2+\mu_2\eta_1^2+\mu_3\eta_1\eta_2$. Recall that $(\mu_1+\mu_3,\mu_2+\mu_3)$ can be assumed either $(3,5)$, or $(4,4)$.

The same argument used for elliptic curves on $F$ shows that $\Sigma\subseteq\p8$ is non--degenerate. Restricting Sequence \eqref{seqIdeal} for $\Sigma$ (which is a Koszul complex) to $\Sigma$ itself we obtain that $\cG\otimes\cO_\Sigma\cong\cN_{\Sigma\vert\Phi}$. Adjunction formula finally yields $\omega_\Sigma\cong\cO_\Sigma(-\eta)$. We conclude that $\Sigma$ is a del Pezzo surface embedded anticanonically in $\p8$, thus its degree is $8$. Looking at the classification of del Pezzo surfaces we know that $\Sigma$ is isomorphic to either $Q:=\p1\times\p1$, or $\Bbb F_1:=\bP(\cO_{\p1}\oplus\cO_{\p1}(-1))$. Conversely, again imitating the argument used for elliptic curves on $F$, recalling that each del Pezzo surface and each del Pezzo sextic fourfold in $\p8$ are aCM, it is also possible to prove that each del Pezzo surface $\Sigma\subseteq\Phi$ is the zero locus of an Ulrich bundle $\cG$ of rank $2$ on $\Phi$ with $\gamma_1=2h$. Thus, in order to prove the statement it suffices to deal with the possible embeddings $\Sigma\subseteq\Phi$. 

We start from the case $\Sigma\cong\Bbb F_1$. Recall that $\Pic(\Bbb F_1)$ is freely generated by divisors $\ell$ and $m$ such that $\ell^2=-1$, $m^2=0$ and $\ell m=1$.  Let $\Sigma\subseteq\Phi$ be any smooth del Pezzo octic surface isomorphic to $\Bbb F_1$ in the class $\gamma_2$.

Each morphism $\varphi\colon \Bbb F_1\to\Phi$ induces, by compositions with the two natural projections $\Phi\to\p2$, maps $\varphi_i\colon \Bbb F_1\to\p2$. In particular we know that $\varphi^*\cO_{\Phi}(\eta_i)\cong \varphi_i^*\cO_{\p2}(1)\cong \cO_{\Bbb F_1}(a_i\ell+b_i m)$ for suitable integers $a_i, b_i$.  We thus have $\varphi^*\cO_{\Phi}(\eta)\cong \cO_{\Bbb F_1}((a_1+a_2)\ell+(b_1+b_2)m)$.

Since $\cO_{\Bbb F_1}(a_i\ell+b_i m)$ is effective, it follows that $a_i$ and $b_i$ are non--negative. Moreover the embedding $\Sigma\subseteq\p8$ is anticanonical, thus $a_1+a_2=2$ and $b_1+b_2=3$. We have $\mu_i=\eta_i^2\Sigma=(a_i\ell+b_im)^2=a_i(2b_i-a_i)$, $i=1,2$. 

Let us first examine the case $\mu_1+\mu_3=\mu_2+\mu_3=4$. If $a_1=a_2=1$, then $2b_i-1=\mu_i$ must be odd. Moreover 
$$
2\mu_3=8-\mu_1-\mu_2=10-2b_1-2b_2=10-6=4.
$$
We conclude that $\mu_1=\mu_2=\mu_3=2$, a contradiction. If $a_1=0$ and $a_2=2$, then $\mu_1=0$, whence $\mu_2=0$ and $\mu_3=4$. The equality $2(2b_2-2)=\mu_2=0$ thus yields $b_2=1$. The morphism $\varphi_2$ should be thus defined by three sections of $\cO_{\Bbb F_1}(2\ell+m)$ without common zeros, which is absurd, because all such sections vanish identically on $\ell$. A similar argument holds in the case $a_1=2$ and $a_2=0$.

Now we examine the second case $\mu_1+\mu_3=3$, $\mu_2+\mu_3=5$. Taking into account of the listed constraints, arguing as above it is not difficult to check that $a_1=a_2=b_1=1$ and $b_2=2$. It follows that $\mu_1=1$, $\mu_2=3$, $\mu_3=2$, i.e. $\gamma_2=\eta_2^2+3\eta_1^2+2\eta_1\eta_2$. 

Notice that each bundle $\cG$ with such a  $\gamma_2$ is, necessarily, indecomposable. Indeed, if $\cG\cong{\mathcal L}_1\oplus{\mathcal L}_2$ for some ${\mathcal L}_i\in\Pic(\Phi)$, then ${\mathcal L}_i$ is aCM. Either both the ${\mathcal L}_i$ are initialized or one of them is initialized and the other one has no sections. Thanks to the equalities
$$
  h^0\big(\Phi,{\mathcal L}_1\big)+h^0\big(\Phi,{\mathcal L}_2\big)=h^0\big(\Phi,\cG\big)=12\qquad
  c_1({\mathcal L}_1)+c_1({\mathcal L}_2)=2\eta
$$
and to Proposition \ref{pLineBundle}, it is easy to check that $\cG\cong\cO_{\Phi}(2\eta_2)\oplus\cO_{\Phi}(2\eta_1)$ necessarily. In particular we should have $\gamma_2=4\eta_1\eta_2$, a contradiction.  

We finally examine the case of the smooth quadric $Q$. Recall that $\Pic(Q)$ is freely generated by divisors $\ell$ and $m$ such that $\ell^2=m^2=0$ and $\ell m=1$. 

As in the previous case we have two morphisms $\varphi_i\colon Q\to\p2$ with $\varphi^*\cO_{\Phi}(\eta_i)\cong \varphi_i^*\cO_{\p2}(1)\cong \cO_{Q}(a_i\ell+b_i m)$ for suitable integers $a_i, b_i$.  Thus again $\varphi^*\cO_{\Phi}(\eta)\cong \cO_{Q}((a_1+a_2)\ell+(b_1+b_2)m)$.

In this case the list of constraints is as follows: $a_i,b_i\ge 0$, $a_1+a_2=b_1+b_2=2$, $\mu_i=\eta_i^2\Sigma=(a_i\ell+b_im)^2=2a_ib_i$, $i=1,2$. 

Let us start from the case $\mu_1+\mu_3=3$, $\mu_2+\mu_3=5$. If $a_1=a_2=1$, then
$$
2\mu_3=8-\mu_1-\mu_2=8-2b_1-2b_2=8-4=4,
$$
thus $2b_1=\mu_1=1$, a contradiction. If $a_1=0$ and $a_2=2$, then $\mu_1=0$, $\mu_2=2$ and $\mu_3=3$. In particular $b_1=b_2=1$, hence the morphism $\varphi_1$ should be defined by three sections of $\cO_{Q}(m)$. It would follow that its image should be contained in a line in $\p2$, thus $\Sigma$ would be contained in a divisor of the linear system $\vert\eta_1\vert$. We conclude that $\Sigma$ should be degenerate, a contradiction. The argument in the case $a_1=2$ and $a_2=0$ is similar.

Finally let us consider the case $\mu_1+\mu_3=\mu_2+\mu_3=4$. Arguing as in the previous cases, we deduce that either $a_1=b_2=0$ and $a_2=b_1=2$, or $a_2=b_1=0$ and $a_1=b_2=2$, or $a_1=b_1=a_2=b_2=1$. In the two former cases $\mu_1=\mu_2=0$ whence $\mu_3=4$, in the latter $\mu_1=\mu_2=2$, whence $\mu_3=2$.

Consider the case $\mu_1=\mu_2=0$, $\mu_3=4$. The image of the morphism $Q\to\p{d}$ associated to the complete linear system $\vert\cO_Q(d\ell)\vert$ (or $\vert\cO_Q(dm)\vert$) is a rational normal curve for each $d\ge1$. In particular, we notice that $D_i:=\varphi_i(Q)\subseteq \p2$ is a conic, whence $\varphi_i^{-1}(D_i)\in\vert 2\eta_i\vert$. Since  $\Sigma\subseteq\varphi_i^{-1}(D_1)\cap\varphi_i^{-1}(D_2)$ and both the schemes are in the class $4\eta_1\eta_2$ in $A^2(\Phi)$, it follows that $\Sigma$ is complete intersection inside $\Phi$. Since $h^2\big(\Phi,\cO_{\Phi}(-2\eta)\big)=0$, it follows from Theorem \ref{tSerre} the existence of two exact sequences
\begin{gather*}
0\longrightarrow \cO_{\Phi}\longrightarrow \cG\longrightarrow \cI_{\Sigma\vert \Phi}(2\eta)\longrightarrow 0,\\
0\longrightarrow \cO_{\Phi}\longrightarrow \cO_{\Phi}(2\eta_1)\oplus\cO_{\Phi}(2\eta_2)\longrightarrow \cI_{\Sigma\vert \Phi}(2\eta)\longrightarrow 0
\end{gather*}
which must coincide because $h^1\big(\Phi,\cO_{\Phi}(-2\eta)\big)=0$ (see again Theorem \ref{tSerre}). We conclude that
$\cG\cong \cO_{\Phi}(2\eta_1)\oplus\cO_{\Phi}(2\eta_2)$. 

We conclude with the case $\mu_1=\mu_2=\mu_3=2$. In this case the maps $\varphi_i$ are the restriction of projections $\psi_i\colon \p3\dashrightarrow\p2$ from a point. Let $\psi\colon \p3\dashrightarrow\Phi$ be their product rational map. By construction $\varphi=\psi_{\vert Q}$. 

By definition of product map, the equations defining $\psi$ are
$$
(x_0,x_1,x_2,x_3,x_4,x_5,x_6,x_7,x_8)=(y_0y_3,y_0y_4,y_0y_5,y_1y_3,y_1y_4,y_1y_5,y_2y_3,y_2y_4,y_2y_5)
$$
where $y_0,y_1,y_2$ and $y_3,y_4,y_5$ are two triples of independent sections of $H^0\big(\p3,\cO_{\p3}(1)\big)$. Let $V\subseteq H^0\big(\p3,\cO_{\p3}(1)\big)$ the subspace generated by all these sections: we distinguish the two cases $\dim(V)=3,4$. 

In the last case we can assume $y_4=\sum_{i=0}^3u_iy_i$ and $y_5=\sum_{j=0}^3v_jy_j$. Since $y_3,y_4,y_5$ are independent, it follows that at least one of the $u_i$'s, $i=0,1,2$ is non--zero.
It is easy to check that $\im(\psi)$ is contained in the hyperplane with equation
\begin{align*}
(u_0v_3-u_3v_0)x_0&+v_0x_1+(u_1v_3-u_3v_1)x_3+v_1x_4+\\
&+(u_2v_3-u_3v_2)x_6+v_2x_7=u_0x_2+u_1x_5+u_2x_8.
\end{align*}

In the first case we can write $y_3=\sum_{i=0}^2u_iy_i$, $y_4=\sum_{j=0}^2v_jy_j$, $y_5=\sum_{j=0}^2w_jy_j$. Since $y_3,y_4,y_5$ are independent, it follows that at least one of the $u_i$'s is non--zero. Again one checks easily that the hyperplane with equation
$$
w_0x_0+w_1x_3+w_2x_6=u_0x_2+u_1x_5+u_2x_8
$$
contains $\im(\psi)$.

Thus $\Sigma\subseteq\im(\psi)\subseteq\p8$ is contained in a hyperplane too. In particular, $\Sigma$ is not a del Pezzo octic, because it is degenerate. We conclude that also this second case cannot occur.
\end{proof}

\section{The intermediate cases}
\label{sIntermediate}
In this section we will classify the indecomposable, initialized, aCM bundles of rank $2$ on $F$ and $\Phi$ whose first Chern class is not extremal, i.e. is neither $0$, nor $2h$, nor $2\eta$. 

We will show that there there are only two possible bundles (both on $F$ and on $\Phi$) up to permuting the generators of the Picard group. Such bundles are described in the two following examples.

\begin{example}
Consider the indecomposable bundle $\cG:=\pi_2^*\Omega_{\p2}^1(2\eta_2)$. Using K\"unneth formulas it is easy to check that $\cG$ is initialized and aCM. Moreover its Chern classes are $\gamma_1=\eta_2$ and $\gamma_2=\eta_2^2$.
\end{example}

\begin{example}
\label{fond2}
We have $\Ext^1_F(\cO_F(h_1),\cO_F(-h_1+h_2))\cong H^1(F, \cO_F(-2h_1+h_2))\cong k$ (see Proposition \ref{pLineBundle}). It follows the existence of a unique indecomposable extension
\begin{equation}
\label{seqOmega}
0\longrightarrow\cO_F(-h_1+h_2)\longrightarrow \cE\longrightarrow\cO_F(-h_1)\longrightarrow 0.
\end{equation}
It is immediate to check that $\cE$ is initialized, indecomposable and aCM. Its Chern classes are  $c_1=h_2$ and $c_2=h_2^2$.

\begin{claim}
$\cE\cong p_2^*\Omega_{\p2}^1(2h_2)$.
\end{claim}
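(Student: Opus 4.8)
The plan is to recognize $p_2^*\Omega_{\p2}^1(2h_2)$ as the middle term of the relative tautological (Euler) sequence attached to the projective bundle structure $p_2\colon F\to\p2$, and then to match it with the defining extension \eqref{seqOmega} by exploiting that the relevant $\Ext^1$ is one--dimensional. The point is that $\Omega_{\p2}^1(2)$ is precisely the rank $2$ bundle whose projectivization is $F$ via $p_2$, so the extension defining $\cE$ should be nothing but the tautological one.

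First I would use the isomorphism $F\cong\bP(\Omega_{\p2}^1(2))$ realized by $p_2$, under which $\cO_F(h_1)$ is the relative hyperplane bundle and $p_2{}_*\cO_F(h_1)\cong\Omega_{\p2}^1(2)$ (this last identity is recalled in the proof of Proposition \ref{pUpper6}). In Grothendieck's convention this produces a canonical surjection $p_2^*\Omega_{\p2}^1(2)\twoheadrightarrow\cO_F(h_1)$ whose kernel $\mathcal K$ is a line bundle. Taking determinants gives $\mathcal K\cong p_2^*\det\big(\Omega_{\p2}^1(2)\big)\otimes\cO_F(-h_1)$; since $c_1\big(\Omega_{\p2}^1(2)\big)=-3H+4H=H$ and $p_2^*\cO_{\p2}(1)\cong\cO_F(h_2)$, we obtain $\mathcal K\cong\cO_F(-h_1+h_2)$. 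Observing finally that $p_2^*\big(\Omega_{\p2}^1(2)\big)=p_2^*\Omega_{\p2}^1\otimes\cO_F(2h_2)=p_2^*\Omega_{\p2}^1(2h_2)$, the tautological sequence reads
\begin{equation*}
0\longrightarrow\cO_F(-h_1+h_2)\longrightarrow p_2^*\Omega_{\p2}^1(2h_2)\longrightarrow\cO_F(h_1)\longrightarrow0,
\end{equation*}
an extension of the same two line bundles governing \eqref{seqOmega}.

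It then remains to check that this extension is non--split. For that I would restrict to a fibre $\ell\cong\p1$ of $p_2$: the restriction of the pulled--back bundle $p_2^*\Omega_{\p2}^1(2h_2)$ to $\ell$ is trivial, $\cO_F(h_1)$ restricts to $\cO_{\p1}(1)$ and $\cO_F(h_2)$ restricts to $\cO_{\p1}$, so a splitting would force $\cO_{\p1}^{\oplus2}\cong\cO_{\p1}(-1)\oplus\cO_{\p1}(1)$, which is impossible by uniqueness of the splitting type on $\p1$. Hence the sequence above represents a non--zero class in $\Ext^1_F\big(\cO_F(h_1),\cO_F(-h_1+h_2)\big)\cong H^1\big(F,\cO_F(-2h_1+h_2)\big)\cong k$. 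As this space is one--dimensional, any two non--split extensions of $\cO_F(h_1)$ by $\cO_F(-h_1+h_2)$ are isomorphic as extensions, hence have isomorphic middle terms; since $\cE$ is by definition the unique non--split extension \eqref{seqOmega}, I conclude $\cE\cong p_2^*\Omega_{\p2}^1(2h_2)$.

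The main obstacle here is bookkeeping rather than anything conceptual: one has to pin down the projectivization convention so that $\cO_F(h_1)$ really is the relative $\cO(1)$ (consistently with $p_2{}_*\cO_F(h_1)\cong\Omega_{\p2}^1(2)$), then read off the kernel twist $\cO_F(-h_1+h_2)$ correctly from the determinant computation, and finally verify non--splitting on a fibre. Once these are in place, the one--dimensionality of the $\Ext$ group closes the argument at once.
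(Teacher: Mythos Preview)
Your argument is correct and takes a genuinely different route from the paper's. The paper works indirectly through the fourfold: it sets $\cG:=\pi_2^*\Omega_{\p2}^1(2\eta_2)$ on $\Phi$, restricts to $F$, computes via K\"unneth that $h^0\big(F,\cG\otimes\cO_F(h_1-h_2)\big)=1$, and then analyzes the zero locus of the unique section of that twist to obtain the sequence $0\to\cO_F\to\cG\otimes\cO_F(h_1-h_2)\to\cO_F(2h_1-h_2)\to0$, which after retwisting is the extension \eqref{seqOmega}. You instead read off the same extension directly as the relative Euler (tautological) sequence of $p_2\colon F\cong\bP(\Omega_{\p2}^1(2))\to\p2$, identify the kernel by a determinant computation, and certify non--splitness on a fibre. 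Your approach is more economical and stays entirely on $F$; the paper's approach has the mild advantage of being parallel to its treatment of $\Phi$ and of reusing machinery (zero loci of sections, Sequence \eqref{seqIdeal}) already set up elsewhere. Either way the conclusion is forced by the one--dimensionality of the $\Ext$ group.
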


It suffices to prove the existence of another extension like Sequence \eqref{seqOmega} with $p_2^*\Omega_{\p2}^1(2h_2)$ in the middle. In fact, let $\cG:=\pi_2^*\Omega_{\p2}^1(2\eta_2)$ and consider the standard restriction sequence
$$
0\longrightarrow \cG(-2\eta_2)\longrightarrow \cG(\eta_1-\eta_2) \longrightarrow \cG\otimes\cO_F(h_1-h_2)\longrightarrow 0.
$$
K\"unneth formula yields $h^1\big(\Phi,\cG(-2\eta_2)\big)=1$ and $h^0\big(\Phi,\cG(\eta_1-\eta_2)\big)=h^1\big(\Phi,\cG(\eta_1-\eta_2)\big)=0$, whence $h^0\big(F,\cG\otimes\cO_F(h_1-h_2)\big)=1$.  Let $s\in H^0\big(F,\cG\otimes\cO_F(h_1-h_2)\big)$ be a general section and consider the corresponding Sequence \eqref{seqIdeal}
$$
0\longrightarrow \cO_F(D)\longrightarrow \cG\otimes\cO_F(h_1-h_2)\longrightarrow \cI_E(2h_1-h_2-D)\longrightarrow 0,
$$
where $D$ is an effective divisor and $E$ has pure codimension $2$ in $F$ or it is empty.

On the one hand $h^0\big(F,\cG\otimes\cO_F(h_1-h_2)\big)=1$, thus $h^0\big(F,\cO_F(D)\big)\le1$. This last inequality forces the vanishing $D=0$, due to Theorem \ref{tVectorBundle}. On the other hand an easy computation shows that $c_2(\cG\otimes\cO_F(h_1-h_2))=0$. It follows that $\cG\otimes\cO_F(h_1-h_2)$ fits into the exact sequence
$$
0\longrightarrow \cO_F\longrightarrow \cG\otimes\cO_F(h_1-h_2)\longrightarrow \cO_F(2h_1-h_2)\longrightarrow 0
$$
which is the claimed extension twisted by $\cO_F(-h_1+h_2)$.
\end{example}

As claimed at the beginning of this section, we will show in the following that the two bundles described above are the only possible indecomposable, initialized aCM bundles of rank $2$ on $F$ and $\Phi$.

Inequality \eqref{positivity3} with $D=0$ and Formulas \eqref{c_1c_2}, \eqref{hc_2}  yield the following list (up to permutations) for the invariants associated to $\cE$ and $E$.
$$
\begin{array}{|c|c|c|c|c|}           \hline
  \text{Case} &{ (\alpha_1,\alpha_2) } & {(\beta_1,\beta_2) } & { \deg(E) } & { p_a(E) } \\ \hline
  \text{L}&(0,1) & (1,0) & 1& 0\\  \hline
  \text{M}&(0,2) & (1,0) & 1 & 0\\  \hline
  \text{N}&(1,1) & (2,0) & 2 & 0\\  \hline
  \text{P}&(1,1) & (1,1) & 2 & 0\\  \hline
  \text{Q}&(1,2) & (2,2) & 4 & 0\\  \hline
\end{array}
$$

Let $\cE$ have the invariants in the list above. Then $\cE^\vee(th)$ is an indecomposable aCM bundle too. We take $t$ in such a way that it is also initialized. Propositions \ref{pLower6} and \ref{pUpper6} imply $0\le 2t-\alpha_i\le2$, i.e. $t=1$. We have
\begin{gather*}
  c_1(\cE^\vee(h))=(2-\alpha_1)h_1+(2-\alpha_2)h_2,\\
  c_2(\cE^\vee(h))=(\beta_2-2\alpha_1-\alpha_2+3)h_1^2+(\beta_1-\alpha_1-2\alpha_2+3)h_2^2.
 \end{gather*}
 It follows that case L occurs if and only if case Q occurs too.

\subsection{The cases L and Q}
In case L we know by the table that the cycle of $E$ in $A^2(F)$ must be  $h_2^2$.

Let $E\subseteq F$ be such a line. We have $\omega_E\cong\cO_E(-2h)$, hence $\det(\cN_{E\vert F})\cong\cO_E\cong\cO_F(h_2)\otimes\cO_E$. Taking into account that $h^2\big(F,\cO_F(-h_2)\big)=0$ (see Proposition \ref{pLineBundle}), we thus know the existence of an exact sequence
\begin{equation}
\label{seqLine}
0\longrightarrow \cO_F\longrightarrow \cE\longrightarrow\cI_{E\vert F}(h_2)\longrightarrow0.
\end{equation}
where $\cE$ is a vector bundle of rank $2$ on $F$ (see Theorem \ref{tSerre}). It is immediate to check that $\cE$ is initialized.

The cohomology of Sequence \eqref{seqStandard} twisted by $\cO_F(h_1)$ gives rise to the exact sequence
$$
  H^0\big(F,\cO_F(h_1)\big)\longrightarrow H^0\big(E,\cO_E(h_1)\big)\longrightarrow H^1\big(F,\cI_{E\vert F}(h_1)\big)\longrightarrow0.
$$
The multiplication by $h_1$ gives rise to the commutative diagram
$$
\begin{CD}
  H^0\big(F,\cO_{F}\big)@>\sim>> H^0\big(E,\cO_E\big)\\
  @VVV @V\psi VV\\
  H^0\big(F,\cO_{F}(h_1)\big)@>>> H^0\big(E,\cO_E(h_1)\big)\\
\end{CD}
$$
The horizontal upper arrow is an isomorphism, because E is connected. Moreover the cycle of $E$ is $h_2^2$, thus $\psi$, which is the multiplication by $h_1$, is an isomorphism too. It follows that the map below is surjective, whence $h^1\big(F,\cI_{E\vert F}(h_1)\big)=0$. 

Since $h^0\big(E,\cO_{E}(h_1)\big)=2$ and $h^0\big(F,\cO_F(h_1)\big)=3$, it follows that the cohomology of Sequence \eqref{seqStandard} twisted by $\cO_F(h_1)$ yields $h^0\big(F,\cI_{E\vert F}(h_1)\big)=1$. The cohomology of Sequence \eqref{seqLine} twisted by $\cO_F(h_1-h_2)$ thus yields  $h^0\big(F,\cE(h_1-h_2)\big)=1$. 

Let us consider the zero locus $Z$ of a general section of $\cE(h_1-h_2)$. As usual we decompose $Z=C\cup\Delta$ where $C$ has pure codimension $2$ (or it is empty) and $\Delta$ is an effective divisor. The corresponding Koszul complex is
$$
0\longrightarrow \cO_F(\Delta)\longrightarrow \cE(h_1-h_2)\longrightarrow\cI_{C\vert F}(2h_1-h_2-\Delta)\longrightarrow0.
$$

We deduce that $h^0\big(F,\cO_{F}(\Delta)\big)\le1$, hence Proposition \ref{pLineBundle} and the effectiveness of $\Delta$ yield $\Delta=0$. Thus $C$ is in the class of $c_2(\cE(h_1-h_2))=0$, hence it is empty. We conclude that the above sequence twisted by $\cO_F(-h_1+h_2)$ is
$$
0\longrightarrow \cO_F(-h_1+h_2)\longrightarrow \cE\longrightarrow\cO_F(h_1)\longrightarrow 0.
$$ 
which is the extension of Example \ref{fond2} because $\Ext^1_F\big(\cO_{F}(2h_1-h_2),\cO_F\big)\cong k$. In particular the bundle $\cE$ is indecomposable, initialized and aCM, thus both the cases L and Q are admissible.

\subsection{The case M}
In case M we know by the table that $E\subseteq F$ is a line, whose class in $ A^2(F)$ is $h_2^2$ and we know that $\det({\cN_{E\vert F}})\cong\cO_F(2h_2)\otimes\cO_E$. Since $h^2\big(F,\cO_F(-2h_2)\big)=0$, it follows the existence of an exact sequence
$$
0\longrightarrow \cO_F(-2h_2)\longrightarrow \cE^\vee\longrightarrow\cI_{E\vert F}\longrightarrow0.
$$
where $\cE$ is a vector bundle of rank $2$ on $F$. On the other hand we also have the exact sequence
$$
0\longrightarrow \cO_F(-2h_2)\longrightarrow \cO_F(-h_2)^{\oplus2}\longrightarrow\cI_{E\vert F}\longrightarrow0.
$$

Since $h^1\big(F,\cO_F(-2h_2)\big)=0$ (Proposition \ref{pLineBundle}), we conclude that $\cE\cong\cO_F(h_2)^{\oplus2}$ thanks to Theorem \ref{tSerre}.

In particular case M is not admissible in the sense that the sheaf $\cE$ exists, but it is decomposable.

\subsection{The cases N and P}
In both these cases $E$ is a curve of degree $2$ of genus $0$. The curve $E$ is either irreducible or the union of two concurrent lines, because $p_a(E)=0$, or a double line.

In case P only the first two cases are possible because the class of $E$ is $h_2^2+h_1^2$, thus $E$ is reduced. In this case $\omega_E\cong\cO_E(-h)$, whence $\det(\cN_{E\vert F})\cong\cO_E(h)$. Since $h^2\big(F,\cO_F(h)\big)=0$, we thus know the existence of an exact sequence
$$
0\longrightarrow \cO_F\longrightarrow \cE\longrightarrow\cI_{E\vert F}(h)\longrightarrow0.
$$
where $\cE$ is a vector bundle of rank $2$ on $F$. 

Since $E$ is either a conic or the union of two distinct concurrent lines and $h_2c_2=1$, it follows that $h^0\big(E,\cO_E(h_2)\big)=2$. Thus the cohomology of Sequence \ref{seqStandard} twisted by $\cO_F(h_2)$ implies on the one hand $h^0\big(F,\cI_{E\vert F}(h_2)\big)\ge1$. On the other hand $h^0\big(F,\cI_{E\vert F}(h_2)\big)<h^0\big(F,\cO_{F}(h_2)\big)$, because $\cO_F(h_2)$ is base point free,  whence $h^0\big(F,\cI_{E\vert F}(h_2)\big)\le2$.

We conclude that the cohomology of the above sequence twisted by $\cO_F(-h_1)$ and Corollary \ref{cLineBundle} thus imply $1\le h^0\big(F,\cE(-h_1)\big)\le2$.

Let us consider the zero locus $Z$ of a general section of $\cE(-h_1)$. Let $Z=C\cup\Delta$ where $C$ has pure codimension $2$ (or it is empty) and $\Delta$ is an effective divisor. The corresponding Koszul complex is
$$
0\longrightarrow \cO_F(\Delta)\longrightarrow \cE(-h_1)\longrightarrow\cI_{C\vert F}(-h_1+h_2-\Delta)\longrightarrow0.
$$
We deduce that $h^0\big(F,\cO_{F}(\Delta)\big)\le2$. Proposition \ref{pLineBundle} and the effectiveness of $\Delta$ yield $\Delta=0$. Thus $C$ is in the class of $c_2(\cE(-h_1))=0$. It follows that $\cE(-h_1)$ corresponds to an element of $\Ext^1_F\big(\cO_{F}(-h_1+h_2),\cO_F\big)= H^1\big(F,\cO_F(h_2-h_1)\big)=0$. We infer that $\cE=\cO_F(h_1)\oplus\cO_F(h_2)$.

Let us examine case N. Notice that two concurrent lines on $F$ have necessarily classes $h_1^2$ and $h_2^2$ in $A^2(F)$. Since the class of $E$ in $A^2(F)$ is $2h_2^2$, it follows that  $E$ is either irreducible or it is a double line. 

Let us examine the first case. Clearly there are distinct $U,V\in\vert h_2\vert$, $i=1,2$ such that $E\cap U\cap V\ne\emptyset$. Since $E U=E V=0$, it would follow that $E\subseteq U\cap V$, whence $2=2h_2^2h=\deg(E)\le \deg(U\cap V)=h_2^2h=1$, a contradiction. 

Thus $E$ is a double structure on a line $E_{red}$ whose cycle in $A^2(F)$ is $h_2^2$. The general theory of double structures (see e.g. \cite{B--E}) gives us an exact sequence of the form
$$
0\longrightarrow \cC_{E_{red}\vert E}\longrightarrow \cO_E\longrightarrow \cO_{E_{red}}\longrightarrow 0.
$$
The conormal sheaf $\cC_{E_{red}\vert E}$ is an invertible sheaf on $E_{red}\cong\p1$, thus $\cC_{E_{red}\vert E}\cong\cO_{\p1}(-a)$. Moreover we know that $a=1$, because $p_a(E)=0$ (see Section 2 of \cite{B--E}).

Recall that the Hilbert scheme $\Gamma$ of lines in $F$ has exactly two components $\Gamma_1$ and $\Gamma_2$, each of them isomorphic to $\p2$ (see Proposition 3.5.6 of \cite{I--P}). 

The lines with cycle $h_2^2$ form one of the two component, say $\Gamma_1$, and the universal family $S_1\to\Gamma_1$ is nothing else the natural projection $p_2\colon\bP(\Omega_{\p2}^1(2))\to\p2$. The variety $F$ does not contain planes, hence the universal family $S_1\to\Gamma_1$ maps surjectively on $F$ (see Proposition 3.3.5 of \cite{I--P}). Lemma 3.3.4 of \cite{I--P} implies that the line $U$ corresponding to the  general point in $\Gamma_1$ satisfies $\cC_{U\vert F}\cong\cN_{U\vert F}^\vee\cong\cO_{\p1}^{\oplus2}$. Clearly the induced action of the automorphism group of $F$ on $\Gamma_1$ is transitive, hence there is an automorphism of $F$ fixing the class $h_2^2$ and sending $E_{red}$ to $U$. We conclude that  $\cC_{E_{red}\vert F}\cong\cO_{\p1}^{\oplus2}$. 

The inclusion $E_{red}\subseteq E\subseteq F$ implies the existence of an epimorphism
$\cO_{\p1}^{\oplus2}\cong\cC_{E_{red}\vert F}\twoheadrightarrow \cC_{E_{red}\vert E}\cong\cO_{\p1}(-1)$, an absurd.

In particular both the cases P and N are not admissible: in case P the bundle $\cE$ exists but it is decomposable, in case N it does not exist at all.

We summarize the results of this section in the following result.
\begin{theorem}
\label{tIntermediate3}
Let $\cE$ be an indecomposable, initialized, aCM bundle of rank $2$ on $F$ and let $c_1=\alpha_1h_1+\alpha_2h_2$. Assume that $c_1$ is neither $0$, nor $2h$. Then  $(c_1,c_2)$ is either $(h_2, h_2^2)$, or $(h_1+2h_2, 2h_1^2+2h_2^2)$, up to permutations of the $h_i$'s.

Conversely, for each such a pair, there exists a unique indecomposable, initialized, aCM bundle $\cE$ of rank $2$ on $F$ with Chern classes $c_1$ and $c_2$. If $c_1=h_2$, then $\cE\cong p_2^*\Omega_{\p2}^1(2h_2)$. If $c_1=h_1+2h_2$, then $\cE\cong p_1^*\Omega_{\p2}^1(2h_1+h_2)$.

Moreover the zero--locus of a general section of $\cE$ is respectively a line, or a, possibly reducible, quartic with arithmetic genus $0$. Each line can be obtained in this way.
 \end{theorem}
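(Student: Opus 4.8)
The plan is to assemble the case-by-case analysis of the preceding subsections into the stated dichotomy. First I would recall the reduction already performed: by Propositions \ref{pLower6} and \ref{pUpper6} an indecomposable, initialized, aCM bundle $\cE$ of rank $2$ has $0\le\alpha_i\le2$, so excluding $c_1=0$ and $c_1=2h$ leaves, up to permuting $h_1$ and $h_2$, only $(\alpha_1,\alpha_2)\in\{(0,1),(0,2),(1,1),(1,2)\}$. Since by Lemma \ref{lCodimension} a general section vanishes in pure codimension $2$, I may set $D=0$ in Inequality \eqref{positivity3} and combine it with Formulas \eqref{c_1c_2} and \eqref{hc_2} together with Lemma \ref{lInvariantE}; this pins down $(\beta_1,\beta_2)$, $\deg(E)$ and $p_a(E)$ and produces exactly the five numerical types L, M, N, P, Q tabulated above.

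Next I would record which types survive. The subsections show that case M forces $\cE\cong\cO_F(h_2)^{\oplus2}$ and case P forces $\cE\cong\cO_F(h_1)\oplus\cO_F(h_2)$, both decomposable, while in case N no rank-two bundle exists at all, because the double-line obstruction yields an impossible surjection $\cC_{E_{red}\vert F}\cong\cO_{\p1}^{\oplus2}\twoheadrightarrow\cO_{\p1}(-1)$. Hence only L, with $(c_1,c_2)=(h_2,h_2^2)$, and Q, with $(c_1,c_2)=(h_1+2h_2,2h_1^2+2h_2^2)$, give indecomposable bundles; this is the first assertion. I would also invoke the involution $\cE\mapsto\cE^\vee(h)$ (initialized precisely at $t=1$) noted before the subsections, which exchanges the numerical types L and Q, so the two cases stand or fall together.

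For existence, uniqueness and the explicit identification I would argue as follows. In case L the Hartshorne--Serre construction \eqref{seqLine} from a line $E$ of class $h_2^2$ produces a rank-two bundle, which the subsection identifies with the unique non-split extension of Example \ref{fond2}, namely $\cE\cong p_2^*\Omega_{\p2}^1(2h_2)$; uniqueness comes from $\dim_k\Ext^1_F(\cO_F(2h_1-h_2),\cO_F)=1$ and the corresponding vanishing in Theorem \ref{tSerre}. For case Q I would apply the involution to $p_2^*\Omega_{\p2}^1(2h_2)$: using $T_{\p2}\cong\Omega_{\p2}^1(3)$ on the relevant factor and the $p_1\leftrightarrow p_2$ symmetry of $F$, the bundle $\cE^\vee(h)$ is identified with $p_1^*\Omega_{\p2}^1(2h_1+h_2)$, and uniqueness transports across the involution. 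Finally the zero-locus descriptions are read off the table: $\deg(E)=1$, $p_a(E)=0$ in case L gives a line, and since the construction begins with an arbitrary line, every line on $F$ arises this way; $\deg(E)=4$, $p_a(E)=0$ in case Q gives a possibly reducible quartic of arithmetic genus $0$.

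The genuine difficulty --- the exclusion of case N via the Hilbert scheme of lines in $F$ and the computation $\cN_{U\vert F}\cong\cO_{\p1}^{\oplus2}$ --- is already discharged inside the subsection, so for the summary statement the only care required is bookkeeping the permutation symmetry, so that the involution sends case L to case Q with the correct projection, and checking that both uniqueness and indecomposability genuinely propagate through $\cE\mapsto\cE^\vee(h)$.
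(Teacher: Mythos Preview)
Your proposal is correct and follows essentially the same route as the paper: the theorem is presented there as a summary of the section, so the reduction to the five numerical types L, M, N, P, Q, the elimination of M, N, P in the subsections, the linkage of L and Q through $\cE\mapsto\cE^\vee(h)$, and the identification of the case-L bundle with the unique extension of Example~\ref{fond2} are exactly the ingredients the paper uses. One small point of care: the involution applied to $p_2^*\Omega_{\p2}^1(2h_2)$ (case L with $c_1=h_2$) yields $c_1=2h_1+h_2$, so to land on the representative $p_1^*\Omega_{\p2}^1(2h_1+h_2)$ with $c_1=h_1+2h_2$ you should start from the permuted case-L bundle $p_1^*\Omega_{\p2}^1(2h_1)$ rather than invoke an additional $p_1\leftrightarrow p_2$ swap after the fact.
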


 We conclude the section by dealing with initialized, indecomposable, aCM bundles $\cG$ of rank $2$ on $\Phi$. As usual let $\gamma_1=\alpha_1\eta_1+\alpha_2\eta_2$ and $\gamma_2=\mu_1\eta_2^2+\mu_2\eta_1^2+\mu_3\eta_1\eta_2$. As in the previous section we know $\cE=\cG\otimes\cO_F$ has second Chern class $c_2=(\mu_1+\mu_3)h_2^2+(\mu_2+\mu_3)h_1^2$. Thus Remark \ref{rRestriction}, Theorem \ref{tIntermediate3} and Inequalities \eqref{positivity4} imply that we have to deal with the existence of the following cases

 $$
\begin{array}{|c|c|c|c|c|}           \hline
  \text{Case} &{ (\alpha_1,\alpha_2) } & {(\mu_1,\mu_2,\mu_3) } \\ \hline
  \text{L}&(0,1) & (1,0,0) \\  \hline
\text{Q}&(1,2) & (1,1,1) \\  \hline
\text{Q}'&(1,2) & (2,2,0) \\  \hline
\text{Q}''&(1,2) & (0,0,2) \\  \hline
\end{array}
$$

As for $F$, if  $\cG$ is initialized, indecomposable aCM bundle, then the same holds true for ${\cG}^\vee(\eta)$. The computation of $c_2({\cG}^\vee(\eta))$ shows that cases Q$'$ and Q$''$ cannot occur, hence only the cases L and Q are admissible.

\begin{theorem}
\label{tIntermediate4}
Let $\cG$ be an indecomposable, initialized, aCM bundle of rank $2$ on $\Phi$ and let $c_1=\alpha_1\eta_1+\alpha_2\eta_2$. Assume that $\gamma_1$ is neither $0$, nor $2\eta$. Then  $(\gamma_1,\gamma_2)$ is either $(\eta_2, \eta_2^2)$, or $(\eta_1+2\eta_2, \eta_1^2+\eta_2^2+\eta_1\eta_2)$, up to permutations of the $\eta_i$'s.

Conversely, for each such a pair, there exists a unique indecomposable, initialized, aCM bundle $\cG$ of rank $2$ on $\Phi$ with Chern classes $\gamma_1$ and $\gamma_2$. If $\gamma_1=\eta_2$, then $\cG\cong \pi_2^*\Omega_{\p2}^1(2\eta_2)$. If $\gamma_1=\eta_1+2\eta_2$, then $\cG\cong \pi_1^*\Omega_{\p2}^1(2\eta_1+\eta_2)$.

Moreover the zero--locus of a general section of $\cG$ is respectively a plane, or a, possibly reducible, quartic surface. Each plane on $F$ can be obtained in this way.
 \end{theorem}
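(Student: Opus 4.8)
\noindent\emph{Proof proposal for Theorem \ref{tIntermediate4}.}
The plan is to build on the reduction carried out just above, where the admissible invariants were narrowed to the two cases L and Q, i.e. $(\gamma_1,\gamma_2)$ equal to $(\eta_2,\eta_2^2)$ or to $(\eta_1+2\eta_2,\eta_1^2+\eta_2^2+\eta_1\eta_2)$ up to permuting $\eta_1$ and $\eta_2$. These are exactly the two pairs in the statement, so the forward implication is already in hand. The structural fact I would then exploit is that the operation $\cG\mapsto\cG^\vee(\eta)$ interchanges the two cases: a direct Chern class computation (using $\omega_\Phi\cong\cO_\Phi(-3\eta)$) gives $c_1(\cG^\vee(\eta))=2\eta-\gamma_1$ and $c_2(\cG^\vee(\eta))=\gamma_2-\gamma_1\eta+\eta^2$, so that a case Q bundle is sent to a bundle with $(\gamma_1,\gamma_2)=(\eta_1,\eta_1^2)$, i.e. case L with the roles of $\eta_1,\eta_2$ exchanged, and moreover $(\pi_1^*\Omega^1_{\p2}(2\eta_1))^\vee(\eta)\cong\pi_1^*\Omega^1_{\p2}(2\eta_1+\eta_2)$. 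Since $\cG^\vee(\eta)$ is again indecomposable (its endomorphism algebra agrees with that of $\cG$), aCM (by Serre duality), and initialized (which I would check through the restriction to $F$ and Remark \ref{rRestriction}, its restriction being the case L bundle on $F$), it suffices to settle case L completely; case Q then follows by applying $(-)^\vee(\eta)$.

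For existence in case L I would invoke the first Example of this section, where $\cG_0:=\pi_2^*\Omega^1_{\p2}(2\eta_2)$ is shown, via K\"unneth, to be initialized, aCM and indecomposable, with $\gamma_1=\eta_2$ and $\gamma_2=\eta_2^2$. For uniqueness, start from an arbitrary indecomposable, initialized, aCM bundle $\cG$ with $(\gamma_1,\gamma_2)=(\eta_2,\eta_2^2)$. By Theorem A for $\Phi$ a general section $\sigma$ vanishes in pure codimension $2$ along $\Sigma:=(\sigma)_0$, whose class in $A^2(\Phi)$ is $\gamma_2=\eta_2^2$ and whose degree is $\gamma_2\eta^2=1$; hence $\Sigma$ is a plane, and being of class $\eta_2^2$ it must be a fibre $\pi_2^{-1}(q)$ of the second projection.

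At this point I would identify $\cG$ with $\cG_0$ through Hartshorne--Serre. Adjunction on the fibre gives $\det(\cN_{\Sigma\vert\Phi})\cong\cO_\Sigma$, and since $\eta_2$ restricts to $0$ on a fibre of $\pi_2$ this equals $\cO_\Phi(\eta_2)\otimes\cO_\Sigma$; thus both $\cG$ and $\cG_0$ are rank $2$ bundles with determinant $\cO_\Phi(\eta_2)$ admitting a section vanishing exactly along the same plane $\Sigma$ (here one uses that $\Omega^1_{\p2}(2)$ has, for every point $q$, a section vanishing precisely at $q$, so $\cG_0$ does realize this particular fibre). As $h^1\big(\Phi,\cO_\Phi(-\eta_2)\big)=0$ by K\"unneth, the uniqueness clause of Theorem \ref{tSerre} forces $\cG\cong\cG_0$. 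Running the same argument over all $q\in\p2$ shows that every plane of class $\eta_2^2$, hence up to permutation every plane on $\Phi$, arises as such a zero locus. Transporting the conclusion back through $(-)^\vee(\eta)$ settles case Q, giving $\cG\cong\pi_1^*\Omega^1_{\p2}(2\eta_1+\eta_2)$ with general zero locus a (possibly reducible) quartic surface of class $\eta_1^2+\eta_2^2+\eta_1\eta_2$ and degree $\gamma_2\eta^2=4$.

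The step I expect to be most delicate is the identification in case L, namely matching the two zero loci so that Theorem \ref{tSerre} applies verbatim: I must ensure that $\cG_0$ admits a section vanishing on exactly the plane $\Sigma=\pi_2^{-1}(q)$ produced by $\cG$, and not merely on a generic fibre. This is where the homogeneity of $\Omega^1_{\p2}(2)$, equivalently the transitive action of $\operatorname{Aut}(\Phi)$ on the fibres of $\pi_2$, is essential. A secondary technical point, which I would dispatch through the restriction to $F$ and Remark \ref{rRestriction}, is confirming that the interchange $\cG\mapsto\cG^\vee(\eta)$ genuinely preserves the property of being initialized, so that case L is applicable to $\cG^\vee(\eta)$.
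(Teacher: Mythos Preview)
Your argument is correct. The forward implication, the existence via the pulled–back $\Omega^1_{\p2}(2)$, and the reduction of case Q to case L through $\cG\mapsto\cG^\vee(\eta)$ are all sound, and your Hartshorne--Serre computation for the fibre $\Sigma=\pi_2^{-1}(q)$ (adjunction gives $\det\cN_{\Sigma\vert\Phi}\cong\cO_\Sigma\cong\cO_\Phi(\eta_2)\otimes\cO_\Sigma$, and the relevant $h^1$ and $h^2$ vanish by K\"unneth) goes through cleanly.

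Where you diverge from the paper is in the uniqueness step for case L. The paper simply says one may repeat the argument used on $F$, and that argument does \emph{not} match the given bundle against $\pi_2^*\Omega^1_{\p2}(2\eta_2)$ via Theorem \ref{tSerre}. Instead, starting from the zero locus $\Sigma$ of a section of $\cG$, it passes to the twist $\cG(\eta_1-\eta_2)$, computes that this has a single section, shows its zero locus is empty (its class $c_2(\cG(\eta_1-\eta_2))$ vanishes and the divisorial part is forced to be zero), and thereby exhibits $\cG$ as the unique nontrivial extension of $\cO_\Phi(\eta_1)$ by $\cO_\Phi(-\eta_1+\eta_2)$; this extension is then recognised as $\pi_2^*\Omega^1_{\p2}(2\eta_2)$ via Example \ref{fond2}. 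Your route is shorter and avoids the auxiliary twist, but it needs the extra input that $\pi_2^*\Omega^1_{\p2}(2\eta_2)$ actually realises \emph{every} plane $\pi_2^{-1}(q)$ as a zero locus, which you correctly justify by the homogeneity of $\Omega^1_{\p2}(2)$; the paper's route never needs to exhibit such a section, since it characterises $\cG$ intrinsically as an extension before any comparison with the model bundle.
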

 \begin{proof}
 The discussion above and Theorem \ref{tVectorBundle} prove the first part of the statement. The existence of such kind of bundles and the fact that planes can be all obtained via such a construction can be proved with the same argument used for the threefold $F$.
 \end{proof}

\bigskip
\noindent
Gianfranco Casnati,\\
Dipartimento di Scienze Matematiche, Politecnico di Torino,\\
c.so Duca degli Abruzzi 24,\\
10129 Torino, Italy\\
e-mail: {\tt gianfranco.casnati@polito.it}

\bigskip
\noindent
Daniele Faenzi,\\
UFR des Sciences et des Techniques, Universit\'e de Bourgogne,\\
9 avenue Alain Savary -- BP 47870\\
21078 Dijon Cedex, France\\
e-mail: {\tt daniele.faenzi@u-bourgogne.fr}

\bigskip
\noindent
Francesco Malaspina,\\
Dipartimento di Scienze Matematiche, Politecnico di Torino,\\
c.so Duca degli Abruzzi 24,\\
10129 Torino, Italy\\
e-mail: {\tt francesco.malaspina@polito.it}

\end{document}